\DeclareMathAlphabet{\mathpzc}{OT1}{pzc}{m}{it}
\newtheorem{thm}{Theorem}[section]
\newtheorem{lem}[thm]{Lemma}
\newtheorem{prop}[thm]{Proposition} 
\newtheorem{cor}[thm]{Corollary}
\newtheorem{rem}[thm]{Remark}
\newtheorem{ex}[thm]{Example}
\newcommand{\p}{\mathpzc{p}}
\newcommand{\bC}{\mathbb C}
\newcommand{\A}{\mathbb A}
\newcommand{\td}{\operatorname{tr.deg}}
\newcommand{\dk}{\operatorname{DK}}
\newcommand{\ml}{\operatorname{ML}}
\title{On Generalised Danielewski and Asanuma varieties }
\author{Parnashree Ghosh$^*$ and Neena Gupta$^{**}$\\
	{\small{\it  Stat-Math Unit, Indian Statistical Institute,}}\\ 
	{\small{\it 203 B.T.Road, Kolkata-700108, India}}\\
	{\small{\it e-mail : $^*$parnashree$\_$r@isical.ac.in, ghoshparnashree@gmail.com}}\\
	{\small{\it e-mail : $^{**}$neenag@isical.ac.in, rnanina@gmail.com}}
}
\begin{document}
	\date{}
	\maketitle
	\abstract{In this paper we extend a result of Dubouloz on the Cancellation Problem in higher dimensions ($\geqslant 2$) over the field of complex numbers to fields of arbitrary characteristic.
We then apply the generalised result to describe the Makar-Limanov and Derksen invariant of 
 generalised Asanuma varieties under certain hypotheses. We also establish a necessary and sufficient condition for certain generalised Asanuma varieties to be isomorphic to polynomial rings.
 }
	
	\smallskip
	
	\noindent
	{\small {{\bf Keywords}. Polynomial algebra, Cancellation Problem, Graded ring, Makar-Limanov invariant.}}
	
	\noindent
	{\small {{\bf 2020 MSC}. Primary: 14R10; Secondary: 13B25, 13A50, 13A02, 14R20}}
	
	\section{Introduction}
	Throughout this paper, all rings will be commutative with unity, $k$ will denote a field and for a commutative ring $R$, $R^{[n]}$ will denote a polynomial ring in $n$ variables over $R$. $R^{*}$ will denote the group of units in $R$.
	
	The Cancellation Problem, a fundamental problem in the area of Affine Algebraic Geometry, asks the following (cf. \cite{eh}):
	
	\medskip
	\noindent
	{\bf Question 1.} Let $D$ and $E$ be two affine domains over a field $k$ such that $D^{[1]} =_k E^{[1]}$. Does this imply $D \cong_k E$?
	
	\medskip

    If $E=k^{[n]}$, then the above question is known as the Zariski Cancellation Problem. 
      The answer to Question 1 is affirmative for one dimensional affine domains (cf. \cite{aeh}).
      However, there are counterexamples in dimensions greater than or equal to two. 
      In \cite{dan}, Danielewski constructed a family of two dimensional pairwise non-isomorphic smooth complex varieties which are counterexamples to the Cancellation Problem. 
      In \cite{cra1}, A. J. Crachiola extended Danielewski's examples over arbitrary characteristic.
      In \cite{dub}, Dubouloz constructed higher dimensional ($\geqslant 3$) analogues of the Danielewski varieties, which are counterexamples to this problem. More precisely,
for $\underline{r}:=(r_1,\ldots,r_m) \in \mathbb{Z}^m_{\geqslant 1}$ and $F\in k^{[m+1]}$,      
       he studied the affine varieties, defined by the following integral domains when $k=\mathbb{C}$: 
      $$
      B_{\underline{r}}:= B(r_1,\ldots,r_m,F) = \frac{k[T_1,\ldots,T_m,U,V]}{(T_1^{r_1}\cdots T_m^{r_m}U-F(T_1,\ldots,T_m,V))},
      $$
      where $F(T_1, \dots, T_m, V)$ is monic in $V$ and $d:= \deg_V F>1$. Note that
      setting $P(V):=F(0,\ldots,0,V)$, we have $d=\deg_V P= \deg_{V} F=d( >1)$. In
      the above setting, Dubouloz proved (\cite{dub}, Corollary 1.1, Corollary 1.2):
      
      \medskip
      \noindent
      {\bf Theorem A.}
   Suppose $F=\prod_{i=1}^{d} (V-\sigma_i(T_1,\ldots,T_m))$, 
    where $\sigma_i$'s are of the form 
        $$
        \sigma_i(T_1,\ldots,T_m)=a_i+T_1\cdots T_mf_i(T_1,\ldots,T_m),~~
         f_i \in \mathbb{C}^{[m]} \text{~for all~} i, ~~1\le i\le d,
         $$
and $a_i \in \bC$ are such that $a_i \neq a_j$ for $i \neq j$. 
Then the following statements hold:
\begin{enumerate}
\item[\rm(i)] Suppose that either of the following conditions hold:\\
(a)  $\underline{r} \in \mathbb{Z}^m_{>1}$ and $\underline{s} \in \mathbb{Z}^m_{\geqslant 1} \setminus \mathbb{Z}^m_{>1}$\\
(b) $\underline{r}, \underline{s} \in \mathbb{Z}^m_{>1}$ are such that the sets $\{r_1,\ldots,r_m\}$ and $\{s_1,\ldots,s_m\}$ are distinct.\\
Then $B_{\underline{r}} \ncong_k B_{\underline{s}}$.
\item[\rm(i)] $B_{\underline{r}}^{[1]} \cong_k B_{\underline{s}}^{[1]}$ for any 
$\underline{r}, \underline{s} \in \mathbb{Z}^m_{\geqslant 1}$. 
\end{enumerate}

	  In this paper, we show that the result of Dubouloz extends to fields of arbitrary characteristic. More generally, we establish the following (\thref{isod}, \thref{stiso}):
	
	\medskip
	\noindent
	{\bf Theorem B.}
	Let $F, F^{\prime} \in k[T_1,\ldots,T_m,V]$ be such that
		$$
	F= a_0+a_1 V+\cdots + a_{d-1}V^{d-1} + V^d,~~\text{and}
	$$
	$$
	F^{\prime}= a_1+ 2a_2V+ \cdots + (d-1)a_{d-1} V^{d-2} + d V^{d-1}
	$$
	for some $a_i \in k[T_1,\ldots,T_m]$, $0 \leqslant i \leqslant d-1$.
	Then the following statements hold:
\begin{enumerate}
\item[\rm(i)] Suppose that either of the following conditions hold:\\
(a)  $\underline{r} \in \mathbb{Z}^m_{>1}$ and $\underline{s} \in \mathbb{Z}^m_{\geqslant 1} \setminus \mathbb{Z}^m_{>1}$\\
(b) $\underline{r}, \underline{s} \in \mathbb{Z}^m_{>1}$ are such that $\underline{r}=(r_1,\ldots,r_m)$ and $\underline{s}=(s_1,\ldots,s_m)$ are not permutation of each other.\\
Then $B_{\underline{r}} \ncong_k B_{\underline{s}}$.

\item[\rm(ii)] $B_{\underline{r}}^{[1]} \cong_k B_{\underline{s}}^{[1]}$ for any 
$\underline{r}, \underline{s} \in \mathbb{Z}^m_{\geqslant 1}$, whenever $\left(F,F^{\prime}\right)=k[T_1,\ldots,T_m,V]$. 
\end{enumerate}	 
Now consider the family of affine domains defined as follows:
	\begin{equation}\label{A}
		A= \frac{k\left[ X_{1}, \ldots , X_{m}, Y,Z,T\right]}{(X_{1}^{r_{1}} \cdots X_{m}^{r_{m}}Y -H(X_{1}, \ldots , X_{m}, Z,T))},\,\ r_i > 1 \text{~for all~} i, 1 \leqslant i \leqslant m,
	\end{equation}
	where  $h(Z,T):=H(0, \ldots, 0, Z,T) \neq ~0$. 
	We shall call a variety defined by a ring of  type \eqref{A} as ``Generalised Asanuma variety" since
for $m=1$ and for a field $k$ of positive characteristic,
and $H=Z^{p^e}+T+T^{sp}$, where $e,s$ are positive integers such that 
$p^e \nmid sp$ and $sp \nmid p^e$,
 rings of the above type were constructed by T. Asanuma	in \cite{asa87}, as an illustration of non-trivial $\A^2$-fibrations (defined in section 2) over a PID not containing $\mathbb{Q}$. 
Special cases of the rings of above type have played crucial roles in solutions
	to central problems on affine spaces. The Makar-Limanov and Derksen invariants (defined in section 2) 
were the key tools in some of the results.
The case $m=1$ accommodates the  famous Russell-Koras threefold $x^2y+x+z^2+t^3=0$
over $k= \bC$ which arose in the context of the Linearisation Problem for ${\bC}^{[3]}$ 
and which is now a potential candidate for a counterexample to Zariski Cancellation problem (ZCP) for the affine three space in characteristic zero
(see \cite{Gicm} for details). 

Recently, the authors have given several necessary and sufficient conditions for the ring $A$ to be a polynomial ring in $m+2$ variables over $k$ for a special form of $H$ (\cite[Theorem 3.10]{asa}).
In section 4, we establish a new criterion (\thref{thpoly}): 

\medskip
\noindent
{\bf Theorem C.}
Let $A$ be the affine domain as in \eqref{A}, with $H=h(Z,T)+(X_1 \cdots X_m)g$ for some $g \in k[X_1,\ldots,X_m,Z,T]$.
 Then the following statements are equivalent:
\begin{itemize}
	\item [\rm (i)] $A$ is geometrically factorial over $k$ and there exists an exponential map $\phi$ on $A$ satisfying 
	$k[x_1,\ldots, x_m] \subseteq A^{\phi} \not \subseteq k[x_1,\ldots, x_m,z,t]$.
	
	\item [\rm (ii)] $A=k^{[m+2]}$.
\end{itemize}

\medskip
 
For the affine domain $A$ in \eqref{A}, when $k$ is a field of positive characteristic, and $H=h(Z,T)$, where
$h$ is a nontrivial line in $k[Z,T]$ (defined in section 2), the second author had shown (\cite[Corollary 3.9]{inv}, \cite[Corollary 3.8]{adv})
that for each $m \geqslant 1$, the corresponding ring $A$ is a counterexample to the ZCP
in dimension $(m+2)$, i.e., $A^{[1]}=k^{[m+3]}$ but $A \neq k^{[m+2]}$. A crucial step in the proof was a result on
the Derksen invariant quoted as \thref{dk} of the present paper.  In section 4, we address the following converse of \thref{dk}:

\medskip
\noindent
{\bf Question 2.} Let $A$  be as in \eqref{A}. Suppose that there exists a system of coordinates $\{Z_1, T_1\}$ of $k[Z,T]$ such that 
$h(Z,T)= a_0(Z_1)+a_1(Z_1)T_1$. Is the Derksen invariant of $A$ equal to $A$? 
 
 \medskip
 
 We shall apply Theorem B to give an affirmative answer to the above question when $H$ is of a certain form (\thref{ml1}).
We also give a complete description of the Derksen and Makar-Limanov  invariants for
	 $$
	 A = \frac{k[X_{1},\ldots, X_{m},Y,Z,T]}{\left( X_{1}^{r_{1}}\cdots X_{m}^{r_{m}}Y-h(Z,T)\right)},
	 $$
	 when $k$ is an infinite field and $A$ is a regular domain (\thref{ml2}).
 
 	In the next section, we recall some definitions, well-known results, and basic properties of exponential maps on a $k$-algebra which will be used in this paper.
	
	\section{Preliminaries} 
 Let $R$ be a ring and $B$ an $R$-algebra. For a prime ideal $\p$ of $R$, 
let $k(\p)$ denote the field $\frac{R_{\p}}{\p R_{\p}}$. 
 Capital letters like $X,Y,Z,T,U,V,W$, $X_1,\ldots,X_m, T_1,\ldots,T_m$, etc., will denote indeterminates over respective ground rings or fields.  
 
 We first recall a few definitions.

\medskip
\noindent 
{\bf Definition.} A polynomial $h \in k[X,Y]$ is said to be a {\it line} in $k[X,Y]$ if $\frac{k[X,Y]}{(h)} = k^{[1]}$. Furthermore, if $k[X,Y] \neq k[h]^{[1]}$, then such an $h$ is said to be a {\it non-trivial line} in $k[X,Y]$. 

\medskip
\noindent
{\bf Definition.} A finitely generated flat $R$-algebra $B$ is said to be an {\it $\A^n$-fibration} over $R$
if $B \otimes_R k(\p) = k(\p)^{[n]}$ for every prime ideal $\p$ of $R$.

\medskip
\noindent
{\bf Definition.} A $k$-algebra $B$ is said to be {\it geometrically factorial} over $k$ if for every algebraic field extension $L$ of $k$, $B \otimes_k L$ is a UFD.  

\medskip
 We now record a special case of a result on the triviality of separable $\mathbb{A}^{1}$-forms \cite[Theorem 7]{dutta}.

\begin{lem}\thlabel{sepco}
	Let $h \in k[Z,T]$ be such that $L[Z,T]=L[h]^{[1]}$, for some separable field extension of $L$ of $k$. Then $k[Z,T]=k[h]^{[1]}$. 
\end{lem}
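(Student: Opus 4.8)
The plan is to recognise the hypothesis as the assertion that $k[Z,T]$ is a \emph{separable $\A^1$-form} over the subring $R := k[h]$, and then to invoke the cited triviality result \cite[Theorem 7]{dutta} directly. First I would dispose of the degenerate case: if $h$ is constant the statement is vacuous (or trivial), so I may assume $h$ is non-constant, in which case $R = k[h] \cong k^{[1]}$ is a polynomial ring in one variable over $k$, in particular a regular affine domain. I then set $B := k[Z,T]$ and regard it as an $R$-algebra via the inclusion $R \subseteq B$. Since $B$ is a domain containing the PID $R$, it is torsion-free and hence flat over $R$, and it is evidently finitely generated.

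The heart of the argument is the translation of the hypothesis into the language of $\A^1$-forms. Put $S := R \otimes_k L = L[h]$, which is faithfully flat over $R$ because $L$ is faithfully flat over $k$. A routine base-change computation gives $B \otimes_R S = B \otimes_k L = L[Z,T]$ and $S = L \otimes_k R = L[h]$, so the hypothesis $L[Z,T] = L[h]^{[1]}$ reads exactly as $B \otimes_R S = S^{[1]}$. Thus $B$ is an $\A^1$-form over $R$ trivialised by the faithfully flat base change $R \to S = R \otimes_k L$. Because $L/k$ is separable, this base change is separable, so $B$ is in fact a \emph{separable} $\A^1$-form over $R$. Applying \cite[Theorem 7]{dutta} then yields $B = R^{[1]}$, that is, $k[Z,T] = k[h]^{[1]}$.

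The step requiring the most care is verifying that the present situation genuinely falls within the scope of the cited theorem: one must match the polynomial-ring phrasing of the hypothesis with the precise notion of \emph{separable $\A^1$-form} used in \cite{dutta} and confirm that the base ring $R = k[h]$ and the separable extension $L/k$ satisfy whatever standing assumptions that result imposes (Noetherianity and regularity of $R$, and separability of the trivialising extension). Once this dictionary is set up correctly, the conclusion is immediate. I would emphasise that the separability hypothesis is essential rather than cosmetic: in positive characteristic purely inseparable $\A^1$-forms can be non-trivial — this is precisely the Asanuma phenomenon exploited elsewhere in the paper — so the strength of \cite[Theorem 7]{dutta} is exactly what rules out such pathology here.
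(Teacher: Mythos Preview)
Your proposal is correct and follows exactly the route the paper takes: the paper offers no proof at all, merely recording the lemma as ``a special case of a result on the triviality of separable $\A^1$-forms \cite[Theorem 7]{dutta}'', and your write-up simply unpacks why the hypothesis $L[Z,T]=L[h]^{[1]}$ exhibits $k[Z,T]$ as a separable $\A^1$-form over $k[h]$, so that the cited theorem applies. One small point you may want to make explicit when finalising: the lemma does not assume $L/k$ is \emph{finite}, but since a coordinate witnessing $L[Z,T]=L[h,g]$ involves only finitely many coefficients from $L$, one may replace $L$ by a finite separable subextension before invoking the theorem.
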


Next we recall the concept of an exponential map on a $k$-algebra and two invariants on the $k$-algebra.

	\medskip
	\noindent
	{\bf Definition.}
		Let $B$ be a $k$-algebra and $\phi: B \rightarrow B^{[1]}$ be a $k$-algebra homomorphism. For an indeterminate $U$ over $B$, let $\phi_{U}$ denote the map $\phi: B \rightarrow B[U]$. Then $\phi$ is said to be an {\it exponential map} on $B$, if the following conditions are satisfied:
		
		\begin{itemize}
			\item [\rm(i)]  $\epsilon_{0} \phi_{U} =id_{B}$, where $\epsilon_{0}: B[U] \rightarrow B$ is the evaluation map at $U=0$.
			
			\item [\rm (ii)]  $\phi_{V} \phi_{U}=\phi_{U+V}$, where $\phi_{V}:B \rightarrow B[V]$ is extended to a $k$-algebra homomorphism $\phi_{V}:B[U] \rightarrow B[U,V]$, by setting $\phi_{V}(U)=U$.
		\end{itemize}

	The {\it ring of invariants} of $\phi$ is a subring of $B$ defined as follows:
	$$
	B^{\phi}:=\left\{b \in B \, | \phi(b)=b  \right\}.
	$$
	If $B^{\phi} \neq B$, then $\phi$ is said to be a non-trivial exponential map. 
	
	Let EXP($B$) denote the set of all exponential maps on $B$. The {\it Makar-Limanov invariant} of $B$ is defined by
	
	$$
	\ml(B):= \bigcap_{\phi \in \text{EXP}(B)} B^{\phi}
	$$
	and the {\it Derksen invariant} is a subring of $B$ defined as
	
	$$
	\dk(B):=k\left[ b \in B^{\phi} \,|\, \phi \in \text{EXP}(B)\, \text{and}\, \, B^{\phi} \subsetneq B   \right].
	$$
	
	We now record some properties of exponential maps. For details one can refer to \cite{cra}, \cite[Chapter I]{miya}. 
	
	\begin{lem}\thlabel{prop}
		Let $B$ be an affine domain over a field $k$ and $\phi$ be a non-trivial exponential map on $B$. Then the following statements hold:
		
		\begin{itemize}
			\item [\rm(i)] $B^{\phi}$ is a factorially closed subring of $B$, i.e., for any non-zero $a,b \in B$, if $ab \in B^{\phi}$, then $a, b \in B^{\phi}$. In particular, $B^{\phi}$ is algebraically closed in $B$.

			\item[\rm(ii)]  Let $x \in B \setminus B^{\phi}$ be an element such that $\deg_U \phi_{U}(x)$ is of minimal positive value. Let $c$ be the leading coefficient of $U$ in $\phi_{U}(x)$. Then $B[c^{-1}]=B^{\phi}[c^{-1}][x]=(B^{\phi}[c^{-1}])^{[1]}$.
			In particular, $\td_k B^{\phi} = \td_k B-1$.
			
			\item[\rm(iii)]  Let $S$ be a multiplicatively closed subset of $B^{\phi} \setminus \{0\}$. Then $\phi$ will induce a non-trivial exponential map $S^{-1}\phi$ on $S^{-1}B$ such that $(S^{-1}B)^{S^{-1}\phi}=S^{-1}(B^{\phi})$.
		\end{itemize}

	\end{lem}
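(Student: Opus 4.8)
My plan rests on a single tool common to all three parts: the degree function $b\mapsto\deg_U\phi_U(b)$ on $B\setminus\{0\}$, which I abbreviate to $\deg b$. Condition (i) in the definition of an exponential map forces $\deg b\geqslant 0$, with $\deg b=0$ exactly when $b\in B^{\phi}$. The key property is additivity: if $\phi_U(a),\phi_U(b)$ have leading $U$-coefficients $\alpha,\beta$ in degrees $p,q$, then the coefficient of $U^{p+q}$ in $\phi_U(ab)=\phi_U(a)\phi_U(b)$ is $\alpha\beta\neq 0$ (here $B$ being a domain is essential), so $\deg(ab)=\deg a+\deg b$. Part (i) is then immediate: if $a,b\neq 0$ and $ab\in B^{\phi}$ then $\deg a+\deg b=0$ with both summands non-negative, whence $\deg a=\deg b=0$ and $a,b\in B^{\phi}$. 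For the ``algebraically closed'' assertion I would use the standard consequence of factorial closedness: given $0\neq a\in B$ with $\sum_{i=0}^{n}c_i a^{i}=0$, $c_i\in B^{\phi}$, after cancelling powers of $a$ I may assume $c_0\neq 0$, so that $-c_0=a\big(\sum_{i\geqslant 1}c_i a^{i-1}\big)\in B^{\phi}$ and factorial closedness gives $a\in B^{\phi}$.

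For Part (ii) I would first show $c\in B^{\phi}$. Writing $\phi_U(x)=\sum_{i=0}^{m}c_iU^{i}$ with $c_m=c$ and substituting into the cocycle identity $\phi_V\phi_U=\phi_{U+V}$, comparison of the coefficients of the top power $U^{m}$ on both sides yields $\phi_V(c)=c$, i.e. $c\in B^{\phi}$; the same computation shows that the leading $U$-coefficient of $\phi_U(b)$ lies in $B^{\phi}$ for \emph{every} $b$, a fact I reuse below. Since $x\notin B^{\phi}$ and $B^{\phi}$ is algebraically closed in $B$ by (i), $x$ is transcendental over $B^{\phi}$, hence over $B^{\phi}[c^{-1}]$ (as $c\in B^{\phi}$), giving $B^{\phi}[c^{-1}][x]=(B^{\phi}[c^{-1}])^{[1]}$. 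What remains, and is the heart of the matter, is the generation identity $B[c^{-1}]=B^{\phi}[c^{-1}][x]$. Here I would argue by induction on $\deg b$: for $b\in B$ of degree $n$ I want to subtract a suitable element of $B^{\phi}[c^{-1}][x]$ with the same leading term so as to strictly drop the degree. Because $\deg x=m$ is minimal positive and leading coefficients are invariant, cancelling the leading term of $b$ requires matching $n$ against a multiple of $m$ and scaling a power of $x$ by an invariant made invertible after localising at $c$; iterating drives every element into $B^{\phi}[c^{-1}][x]$.

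The main obstacle is exactly this reduction step. In characteristic zero an exponential map is $\exp(UD)$ for a locally nilpotent derivation $D$, local slices always exist, and the minimal degree is $m=1$, so the reduction is trivial; in arbitrary characteristic $m$ may exceed $1$, and one must prove that $m\mid\deg b$ for every $b$ (equivalently, that the numerical semigroup $\{\deg b\}$ is generated by $m$) before the leading terms can be matched. This is the only genuinely delicate point, and it is where the iterative higher-derivation structure $D_jD_i=\binom{i+j}{i}D_{i+j}$ encoded in the cocycle identity — rather than a single derivation — must be exploited. Once the generation identity is in hand, the transcendence-degree statement follows since $\operatorname{Frac}(B)=\operatorname{Frac}(B[c^{-1}])$ is purely transcendental of degree one over $\operatorname{Frac}(B^{\phi})=\operatorname{Frac}(B^{\phi}[c^{-1}])$, giving $\td_k B^{\phi}=\td_k B-1$.

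Part (iii) I expect to be routine. As $S\subseteq B^{\phi}$, every $s\in S$ satisfies $\phi_U(s)=s$, so $\phi_U$ extends to $S^{-1}B\to S^{-1}B[U]$ via $\phi_U(b/s)=\phi_U(b)/s$, defining an exponential map $S^{-1}\phi$. The inclusion $S^{-1}(B^{\phi})\subseteq(S^{-1}B)^{S^{-1}\phi}$ is clear, and conversely if $b/s$ is invariant then $\phi_U(b)=b$ holds in $S^{-1}B[U]$, so $t(\phi_U(b)-b)=0$ in $B[U]$ for some $t\in S$; as $B[U]$ is a domain this forces $\phi_U(b)=b$, so $b\in B^{\phi}$ and $b/s\in S^{-1}(B^{\phi})$. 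Non-triviality of $S^{-1}\phi$ follows because $S^{-1}(B^{\phi})\neq S^{-1}B$, the image of the element $x$ from (ii) not being invariant.
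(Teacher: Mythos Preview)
The paper does not prove this lemma; it simply records the statement and refers to Crachiola \cite{cra} and Miyanishi \cite{miya} for details. Your outline is precisely the standard argument found in those references. Parts (i) and (iii) are complete and correct as written. For part (ii) your framework is right: the leading-coefficient computation via the cocycle identity, the transcendence of $x$ over $B^{\phi}$, and the degree-lowering induction in $B[c^{-1}]$ are exactly how the proof goes, and you have correctly isolated the one nontrivial step, namely that the minimal positive degree $m$ divides $\deg b$ for every $b$.

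That gap can be closed along the lines you suggest. Writing $\phi_U(b)=\sum_i D_i(b)U^i$, the relations $D_jD_i=\binom{i+j}{i}D_{i+j}$ together with Lucas' theorem force $m$ to be a power of the characteristic: if $m=p^e m'$ with $p\nmid m'>1$, then $z:=D_{m-p^e}(x)$ satisfies $D_{p^e}(z)=\binom{m}{p^e}D_m(x)\neq 0$ while $D_i(z)=0$ for $i>p^e$, giving $0<\deg z=p^e<m$, a contradiction. The same trick then shows $p^e\mid\deg b$ for every $b$: if the base-$p$ expansion of $n=\deg b$ has a nonzero digit in position $i_0<e$, then $D_{n-p^{i_0}}(b)$ has degree exactly $p^{i_0}<m$. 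With divisibility in hand your induction goes through verbatim, so your plan is sound and matches the literature the paper cites.
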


		 Next we recall the definition of a {\it rigid $k$-domain}.
	
	\medskip
	\noindent
	{\bf Definition.} A $k$-domain $D$ is said to be {\it rigid} if there does not exist any non-trivial exponential map on $D$. 
	
	\medskip
	
	For convenience, we record below an easy lemma.
	
	\begin{lem}\thlabel{rdk}
		Let $D$ be a $k$-domain which is not rigid. Then $\dk(D[W])=D[W]$.
	\end{lem}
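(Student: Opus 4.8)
The plan is to establish the nontrivial inclusion $D[W] \subseteq \dk(D[W])$, the reverse inclusion being immediate. By definition $\dk(D[W])$ is the $k$-subalgebra of $D[W]$ generated by all elements fixed by some non-trivial exponential map, and $D[W]$ is generated over $k$ by $D$ together with $W$. Hence it suffices to exhibit non-trivial exponential maps on $D[W]$ whose rings of invariants account, respectively, for all of $D$ and for the single element $W$.

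To capture $W$, I would invoke the hypothesis that $D$ is not rigid: pick a non-trivial exponential map $\psi$ on $D$ and extend it to $\phi_1 : D[W] \to (D[W])^{[1]}$ by setting $(\phi_1)_U = \psi_U$ on $D$ and $(\phi_1)_U(W) = W$. A direct check of the two axioms shows $\phi_1$ is an exponential map, and comparing coefficients of powers of $W$ gives $(D[W])^{\phi_1} = D^{\psi}[W]$. Since $\psi$ is non-trivial we have $D^{\psi} \subsetneq D$, so $\phi_1$ is non-trivial; and as $W \in D^{\psi}[W]$, this yields $W \in \dk(D[W])$.

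To capture $D$, I would use the translation map $\phi_2 : D[W] \to (D[W])^{[1]}$ fixing $D$ and sending $W \mapsto W + U$. The axioms hold directly: $\epsilon_0 (\phi_2)_U = \mathrm{id}_{D[W]}$ because $W + U$ specialises to $W$ at $U = 0$, and $(\phi_2)_V (\phi_2)_U(W) = W + U + V = (\phi_2)_{U+V}(W)$. For $f = \sum_i a_i W^i \in D[W]$, one computes that the coefficient of the top power of $U$ in $(\phi_2)_U(f) - f = f(W+U) - f(W)$ equals the leading coefficient of $f$ in $W$; hence $f$ is fixed only if $\deg_W f = 0$, i.e. $f \in D$, in every characteristic. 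Thus $(D[W])^{\phi_2} = D \subsetneq D[W]$, so $\phi_2$ is non-trivial and $D \subseteq \dk(D[W])$.

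Combining the two steps, $\dk(D[W])$ contains both $D$ and $W$, hence contains $k[D, W] = D[W]$, which gives the claim. I do not anticipate a genuine obstacle: the entire content lies in the second step, where non-rigidity is exactly what is needed to push $W$ into the Derksen invariant, while the translation map contributes $D$ unconditionally. The only points needing a little care are verifying that the invariant ring of $\phi_2$ is precisely $D$ in positive characteristic (settled by the $U$-leading-coefficient argument, which avoids any reliance on derivations) and confirming the cocycle axiom for the extended map $\phi_1$.
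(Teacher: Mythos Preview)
Your proposal is correct and follows essentially the same approach as the paper: both use the translation map $W\mapsto W+U$ (your $\phi_2$, the paper's $\phi$) to obtain $D$ in the Derksen invariant, and both extend a non-trivial exponential map on $D$ by fixing $W$ (your $\phi_1$, the paper's $\widetilde{\psi}$) to obtain $W$. You supply more detail on verifying the invariant rings and the exponential-map axioms, but the underlying argument is identical.
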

	\begin{proof}
		Consider the exponential map 
		$\phi: D[W] \rightarrow D[W,U]$ defined by
		$$
		\phi(a)=a \text{~for all~} a \in D,\text{~and~} \phi(W)=W+U.
		$$
		It is easy to see that $(D[W])^{\phi}=D$. Again as $D$ is not rigid, we have a non-trivial exponential map $\psi$ on $D$. We extend $\psi$ to an exponential map $\widetilde{\psi}$ on $D[W]$ such that $\widetilde{\psi}|_{D}=\psi$ and $\widetilde{\psi}(W)=W$. Then $W \in (D[W])^{\widetilde{\psi}}$. Therefore, it follows that $\dk(D[W])=D[W]$.
	\end{proof}

	Next we recall the definition of {\it proper} and {\it admissible $\mathbb{Z}$-filtration} on an affine domain. 
	
	\medskip
	\noindent
	{\bf Definition.}
		Let $k$ be a field and $B$ be an affine $k$-domain. A collection $\{B_{n}\,\,| \, n \in \mathbb{Z}\}$ of $k$-subspaces of $B$ is said to be a {\it proper $\mathbb{Z}$-filtration} if
		
		\begin{itemize}
			\item [\rm(i)] $B_{n} \subseteq B_{n+1}$ for every $n \in \mathbb{Z}$.
			
			\item[\rm(ii)]  $B= \bigcup_{n \in \mathbb{Z}} B_{n}$.
			
			\item[\rm(iii)] $\bigcap_{n \in \mathbb{Z}} B_{n}=\{0\}$.
			
			\item[\rm(iv)]  $(B_{n} \setminus B_{n-1}).(B_{m}\setminus B_{m-1}) \subseteq B_{m+n} \setminus B_{m+n-1}$ for all $m,n \in \mathbb{Z}$.
		\end{itemize}
		
	For a non-zero element $b \in B$, the {\it degree} of $b$ with respect to a proper $\mathbb{Z}$-filtration $\{B_n\}_{n \in \mathbb{Z}}$ is defined	to be the smallest integer $d$ such that $b \in B_{d}$.
	A proper $\mathbb{Z}$-filtration $\{B_{n}\}_{n \in \mathbb{Z}}$ of $B$ is said to be {\it admissible} if there is a finite generating set $\Gamma$ of $B$ such that for each $n\in \mathbb{Z}$, every element in $B_{n}$ can be written as a finite sum of monomials from $k[\Gamma] \cap B_{n}$.
	
	\smallskip
	A proper $\mathbb{Z}$-filtration $\{B_{n}\}_{n \in \mathbb{Z}}$ of $B$ defines an associated graded domain defined by 
	$$
	gr(B):= \bigoplus_{n \in \mathbb{Z}} \frac{B_{n}}{B_{n-1}}.
	$$
	It also defines a natural map $\rho: B \rightarrow gr(B)$ such that $\rho(b)=b+B_{n-1}$, if $b \in B_{n} \setminus B_{n-1}$.
	
	The next theorem is on homogenization of exponential maps due to Derksen {\it{et\, al.}} \cite{dom}. The following version can be found in \cite[Theorem 2.6]{cra}.
	
	\begin{thm}\thlabel{dhm}
		Let $B$ be an affine domain over a field $k$ with an admissible proper $\mathbb{Z}$-filtration
		and $gr(B)$ be the induced $\mathbb{Z}$-graded domain. Let $\phi$ be a non-trivial exponential map on $B$. Then $\phi$
		induces a non-trivial homogeneous exponential map $\overline{\phi}$ on $gr(B)$ such that $\rho(B^{\phi}) \subseteq gr(B)^{\overline{\phi}}.$
	\end{thm}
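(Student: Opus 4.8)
The plan is to realise $\overline{\phi}$ as the associated graded of $\phi$ after weighting the auxiliary variable $U$. Writing $\phi_U(b)=\sum_{i\geqslant 0}\phi^{(i)}(b)\,U^{i}$ for $k$-linear maps $\phi^{(i)}\colon B\to B$, the two exponential axioms translate into $\phi^{(0)}=\mathrm{id}_B$ together with the iterative identities $\phi^{(j)}\phi^{(i)}=\binom{i+j}{i}\phi^{(i+j)}$; in particular the $\phi^{(i)}$ form a higher derivation compatible with multiplication. First I would extend the given filtration $\{B_n\}$ to $B[U]$ by assigning $U$ an integer weight $d$, so that $\deg(cU^{i})=\deg c+id$ for $c\in B$ and $gr(B[U])=gr(B)[\overline U]=gr(B)^{[1]}$, with $\overline U:=\rho(U)$ homogeneous of degree $d$. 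Provided $\phi_U$ is a filtered homomorphism for this weight (i.e.\ $\deg_{B[U]}\phi_U(b)\leqslant\deg_B b$ for all $b$), its associated graded is a graded $k$-algebra homomorphism $\overline{\phi}\colon gr(B)\to gr(B)[\overline U]$, given on $\rho(b)$ by retaining exactly the top-degree terms of $\phi_U(b)$.

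The crux is the choice of $d$. For $\phi_U$ to be filtered I need $\deg\phi^{(i)}(b)+id\leqslant\deg b$ for every $b\in B$ and every $i\geqslant 1$ with $\phi^{(i)}(b)\neq 0$, equivalently $d\leqslant(\deg b-\deg\phi^{(i)}(b))/i$ for all such pairs, and I want $d$ as large as possible so that equality is attained. I would therefore set $d$ equal to the infimum of these rational numbers. Non-triviality of $\phi$ guarantees the indexing set is non-empty, and $-$ this is where admissibility and finite generation of $B$ are essential $-$ the higher-derivation Leibniz rules let me bound and compute the degrees of $\phi^{(i)}(b)$ from the values of $\phi$ on a fixed finite generating set $\Gamma$, so the infimum is actually a minimum attained over finitely many data. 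Since the minimal value is a priori only rational, I would rescale the $\mathbb{Z}$-filtration by its denominator $N$ (replacing $\deg$ by $N\deg$, which leaves $gr(B)$ unchanged up to relabelling of degrees), so that $d$ becomes an integer while the equality $\deg\phi^{(i)}(b)+id=\deg b$ is still achieved for some $b$ and some $i\geqslant 1$.

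With $d$ so chosen, the $i=0$ term $\rho(b)$ always lies in top degree, giving $\epsilon_0\overline{\phi}=\mathrm{id}_{gr(B)}$, while the attained equality forces a positive power of $\overline U$ to survive in some $\overline{\phi}(\rho(b))$, so $\overline{\phi}$ is non-trivial; homogeneity is built into the construction. The remaining axiom $\overline{\phi}_V\overline{\phi}_U=\overline{\phi}_{U+V}$ I would obtain by applying the associated-graded functor to the corresponding identity for $\phi$: because passing to leading forms is multiplicative on the domain $gr(B)$ and the weight was chosen to make $\phi_U$ degree-non-increasing, the composition of leading parts equals the leading part of the composition, which yields the cocycle relation for $\overline{\phi}$. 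Finally the inclusion $\rho(B^{\phi})\subseteq gr(B)^{\overline{\phi}}$ is immediate: if $b\in B^{\phi}$ then $\phi^{(i)}(b)=0$ for all $i\geqslant 1$, so the top-degree part of $\phi_U(b)=b$ is simply $\rho(b)$, whence $\overline{\phi}(\rho(b))=\rho(b)$.

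I expect the genuine difficulty to lie entirely in the weight analysis of the second step: proving the critical rational value is attained (hence finite) by reducing all degree computations to the finite generating set via admissibility, and then arranging integrality of $d$ by rescaling without destroying the equality that guarantees non-triviality. By contrast, the verification of the exponential axioms in the third step is formal once the filtered structure is in place, and the invariant-inclusion is an immediate consequence.
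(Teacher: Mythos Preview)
The paper does not give its own proof of this theorem: it is quoted verbatim as a known result, attributed to Derksen, Hadas and Makar-Limanov \cite{dom} with the stated version taken from Crachiola \cite[Theorem~2.6]{cra}. So there is no in-paper argument to compare against.

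That said, your outline is exactly the standard proof one finds in those references. The decomposition $\phi_U(b)=\sum_i \phi^{(i)}(b)U^i$ into an iterative higher derivation, the assignment of a weight $d$ to $U$ chosen as the minimum of $(\deg b-\deg\phi^{(i)}(b))/i$, the use of admissibility to reduce this minimum to finitely many monomial computations on a generating set, the rescaling to clear denominators, and the passage to associated graded --- all of this matches the cited arguments. Your identification of the genuine content (finiteness and attainment of the critical weight via admissibility) versus the formal parts (axiom verification, the invariant inclusion) is accurate. One small point worth tightening in an actual write-up: the verification of $\overline{\phi}_V\overline{\phi}_U=\overline{\phi}_{U+V}$ is cleanest done componentwise via the binomial identities $\phi^{(j)}\phi^{(i)}=\binom{i+j}{i}\phi^{(i+j)}$ rather than by a general ``leading part of a composition'' principle, since one must check that no unexpected cancellation in top degree occurs; but this is routine once $d$ is fixed.
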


In the rest of the section we will state some results on the integral domain $A$ defined by
 \begin{equation}\label{A1}
	A= \frac{k\left[ X_{1}, \ldots , X_{m}, Y,Z,T\right]}{(X_{1}^{r_{1}} \cdots X_{m}^{r_{m}}Y -H(X_{1}, \ldots , X_{m}, Z,T))},\,\ r_i > 1 \text{~for all~} i, 1 \leqslant i \leqslant m,
\end{equation}
where  $h(Z,T):=H(0, \ldots, 0, Z,T) \neq ~0$. 
Let $x_{1}, \ldots, x_{m},z,t$ denote the images of $X_{1}, \ldots , X_{m},Z,T$ respectively in $A$.
Recall that $A$ is the coordinate ring of a generalised Asanuma variety.

The following result is proved in \cite[Lemma 3.3]{adv}.

\begin{lem} \thlabel{subdk}
	$k[x_{1}, \ldots , x_{m}, z,t] \subseteq ~ \dk(A)$. 
\end{lem}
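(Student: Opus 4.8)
The plan is to exhibit, for each of the two variables $z$ and $t$ (the $x_i$ will come along for free), a non-trivial exponential map on $A$ that fixes it, and then to invoke the definition of $\dk(A)$ as the $k$-subalgebra of $A$ generated by all invariant elements of non-trivial exponential maps. Writing $P := x_1^{r_1}\cdots x_m^{r_m}$, the defining relation of $A$ reads $Py = H(x_1,\ldots,x_m,z,t)$, and the whole construction will exploit this single relation.

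First I would build an exponential map $\phi_1$ by lifting it to the polynomial ring $R := k[X_1,\ldots,X_m,Y,Z,T]$. Define $\Phi_1 : R \to R[U]$ by $\Phi_1(X_i)=X_i$, $\Phi_1(Z)=Z$, $\Phi_1(T)=T+PU$ (where now $P=X_1^{r_1}\cdots X_m^{r_m}$), and $\Phi_1(Y)=Y+\tfrac{1}{P}\bigl(H(X_1,\ldots,X_m,Z,T+PU)-H(X_1,\ldots,X_m,Z,T)\bigr)$. The point requiring care is that $\Phi_1(Y)$ actually lies in $R[U]$: expanding $H$ as a polynomial in $T$ and applying the binomial theorem, every term of the difference $H(\ldots,T+PU)-H(\ldots,T)$ carries a factor $(PU)^i$ with $i\geqslant 1$, hence is divisible by $P$ in $R[U]$. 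Crucially this divisibility uses only integer binomial coefficients and no division by integers, so it remains valid over a field of arbitrary characteristic — which is exactly what this paper needs.

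Next I would check that $\Phi_1$ preserves the defining ideal: a direct substitution gives $\Phi_1(PY-H(X_1,\ldots,X_m,Z,T)) = PY - H(X_1,\ldots,X_m,Z,T)$, so $\Phi_1$ descends to a $k$-algebra homomorphism $\phi_1 : A \to A[U]$. Verifying the exponential-map axioms — $\epsilon_{0}\phi_{1,U}=\mathrm{id}_A$ and the cocycle identity $\phi_{1,V}\phi_{1,U}=\phi_{1,U+V}$ — is then routine, since on the coordinates it reduces to the additive translation $T\mapsto T+PU$ together with the forced value of $Y$. Because $\phi_1(t)=t+PU\neq t$, the map $\phi_1$ is non-trivial, and by construction $k[x_1,\ldots,x_m,z]\subseteq A^{\phi_1}\subsetneq A$.

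Finally, by the symmetric construction interchanging the roles of $Z$ and $T$, I would obtain a second non-trivial exponential map $\phi_2$ with $\phi_2(z)=z+PU$ fixing $x_1,\ldots,x_m$ and $t$, so that $k[x_1,\ldots,x_m,t]\subseteq A^{\phi_2}\subsetneq A$. Each of $x_1,\ldots,x_m,z$ then lies in $A^{\phi_1}$ and $t$ lies in $A^{\phi_2}$, with both maps non-trivial, so all of $x_1,\ldots,x_m,z,t$ belong to the generating set of $\dk(A)$; since $\dk(A)$ is a $k$-subalgebra this yields $k[x_1,\ldots,x_m,z,t]\subseteq \dk(A)$. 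The only genuinely delicate step is the well-definedness of $\phi_1$ and $\phi_2$ on $A$, namely the integrality of $\Phi_1(Y)$ and the compatibility with the relation; but both follow cleanly from the divisibility by $P$ noted above, and everything else in the argument is formal.
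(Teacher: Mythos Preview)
Your argument is correct. The paper does not prove this lemma but merely quotes it from \cite[Lemma 3.3]{adv}; the construction you give is precisely the standard one (and the same exponential map, in the $B_{\underline{r}}$ setting, appears explicitly later in the paper in the proof of \thref{mld}(b)), so your approach matches what the cited reference does.
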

The second author proved the following proposition in \cite[Proposition 3.4(i)]{adv} where the result was stated with the hypothesis $\dk(A)=A$. 
However, the proof shows that the hypothesis can be relaxed to $k[x_{1}, \ldots , x_{m},z,t] \subsetneq \dk(A)$. We quote the result with the modified hypothesis. 
\begin{prop}\thlabel{dk}
	Suppose that $k$ is infinite and $k[x_{1}, \ldots , x_{m},z,t] \subsetneq \dk(A)$. 
	Then there exist $Z_1, T_1 \in k[Z,T]$ and $a_0, a_1 \in k^{[1]}$ such that $k[Z,T]=k[Z_1,T_1]$ and $h(Z,T)=a_0(Z_1)+a_1(Z_1)T_1$.
\end{prop}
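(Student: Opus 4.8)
Proof strategy. The plan is to homogenise the problem through \thref{dhm}, reduce the analysis to a single homogeneous exponential map on a model degeneration of $A$, and then read off the shape of $h$ from the exponential structure this induces on the plane $k[z,t]$. First I would restate the hypothesis usefully: since $\dk(A)$ is by definition generated over $k$ by the invariant rings $A^{\phi}$ of the nontrivial exponential maps $\phi$ of $A$, and since $k[x_1,\dots,x_m,z,t]\subseteq\dk(A)$ by \thref{subdk}, the strict inclusion assumed in the statement forces the existence of a \emph{single} nontrivial exponential map $\phi$ on $A$ with $A^{\phi}\not\subseteq k[x_1,\dots,x_m,z,t]$. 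The whole argument then concerns this one $\phi$.

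Next I would pass to an associated graded ring. Equip $A$ with the admissible proper $\mathbb{Z}$-filtration attached to weights $\w(x_i)<0$, $\w(z)=\w(t)=0$, $\w(y)=-\sum_i r_i\w(x_i)>0$, with the $|\w(x_i)|$ chosen increasing enough (e.g. super-increasing) that the degree-zero component is exactly $k[z,t]$ and the leading form of $x_1^{r_1}\cdots x_m^{r_m}Y-H$ is $x_1^{r_1}\cdots x_m^{r_m}y-h(z,t)$. Then $\gr(A)$ is the model domain $B:=k[x_1,\dots,x_m,y,z,t]/(x_1^{r_1}\cdots x_m^{r_m}y-h(z,t))$, and by \thref{dhm} the map $\phi$ induces a nontrivial \emph{homogeneous} exponential map $\overline{\phi}$ on $B$ with $\rho(A^{\phi})\subseteq B^{\overline{\phi}}$; as $\rho$ respects $k[x_1,\dots,x_m,z,t]$ we get $B^{\overline{\phi}}\not\subseteq k[x_1,\dots,x_m,z,t]$, i.e. $B^{\overline{\phi}}$ contains a homogeneous element genuinely involving $y$. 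I would also record the two standard facts that $k[x_1,\dots,x_m]\subseteq\ml(B)$ (a factorial-closedness plus degree argument, so each $x_i$ is $\overline{\phi}$-invariant) and that $z,t$ cannot both be invariant, since otherwise $h=x_1^{r_1}\cdots x_m^{r_m}y$ would be invariant, forcing $y\in B^{\overline{\phi}}$ by factorial closedness (\thref{prop}(i)) and hence $\overline{\phi}$ trivial.

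The heart of the matter is to convert this $y$-involving invariant into information about $h$ on the plane $B_0=k[z,t]=k^{[2]}$. Using factorial closedness of $B^{\overline{\phi}}$ together with the relation $x_1^{r_1}\cdots x_m^{r_m}y=h$, I would extract from $\overline{\phi}$ a nontrivial exponential action on $k[z,t]$; by the classification of exponential maps on the affine plane (Rentschler in characteristic zero, together with its positive-characteristic counterpart) its ring of invariants is $k[Z_1]$ for a coordinate $Z_1$, so $k[Z,T]=k[Z_1,T_1]$. Writing $\overline{\phi}_U(Z_1)=Z_1$ and $\overline{\phi}_U(T_1)=T_1+(\text{higher order in }U)$ and applying $\overline{\phi}_U$ to the defining relation gives $h(Z_1,\overline{\phi}_U(T_1))=x_1^{r_1}\cdots x_m^{r_m}\,\overline{\phi}_U(y)\in x_1^{r_1}\cdots x_m^{r_m}B[U]$; since $x_1^{r_1}\cdots x_m^{r_m}B\cap k[z,t]=(h)$, each $U$-coefficient of $h(Z_1,\overline{\phi}_U(T_1))-h$ must be a multiple of $h$. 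Comparing $T_1$-degrees in this membership, and handling the Frobenius-type leading term of $\overline{\phi}_U(T_1)$ in positive characteristic, is what yields the bound $\deg_{T_1}h\le 1$, that is $h=a_0(Z_1)+a_1(Z_1)T_1$ with $a_0,a_1\in k[Z_1]=k^{[1]}$. Finally, when the planar classification is only available after base change to a separable extension (e.g. the separable closure of $k$), \thref{sepco} descends the coordinate $Z_1$ back to $k[Z,T]$; the hypothesis that $k$ is infinite enters where a generic choice of fibre or parameter is needed.

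The step I expect to be the main obstacle is precisely the passage from ``$B^{\overline{\phi}}$ has an invariant involving $y$'' to ``$k[z,t]$ carries a nontrivial exponential map with coordinate invariant $Z_1$'', together with turning the divisibility $h(Z_1,\overline{\phi}_U(T_1))-h\in(h)$ into the sharp bound $\deg_{T_1}h\le1$. This is exactly where the characteristic-$p$ phenomena must be controlled: non-triangulable planar actions, Frobenius twists in $\overline{\phi}_U(T_1)$, and the existence of nontrivial lines (for which $a_1\ne0$ genuinely occurs, so one cannot hope to force $h\in k[Z_1]$). Secondary difficulties are the case $m\ge2$, where elements involving $y$ of non-positive degree also exist and the reduction to the degree-zero slice $k[z,t]$ needs extra bookkeeping, and the descent of $Z_1$ to $k$ via \thref{sepco}.
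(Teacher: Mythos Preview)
The paper does not prove this proposition: it is quoted from \cite[Proposition~3.4(i)]{adv}, with the remark that the hypothesis $\dk(A)=A$ used there can be relaxed to the present one. So there is no in-paper argument to compare against; I can only comment on your outline on its own terms and against the cited reference.

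Your skeleton---isolate one $\phi$ with $A^{\phi}\not\subseteq k[x_1,\dots,x_m,z,t]$, degenerate via an admissible filtration to the model $B$ with $H=h$, and then read $h$ off from planar data---is the right shape and agrees with the strategy of the cited proof. But the load-bearing middle is not yet an argument. The claim $k[x_1,\dots,x_m]\subseteq\ml(B)$ is false as stated (take $h$ a coordinate of $k[Z,T]$, so $B=k^{[m+2]}$ and $\ml(B)=k$); even for your particular homogeneous $\overline{\phi}$, ``factorial closedness plus degree'' does not by itself force $x_i\in B^{\overline{\phi}}$, and in \cite{adv} this step requires further successive filtrations. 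More seriously, ``extract from $\overline{\phi}$ a nontrivial exponential action on $k[z,t]$'' is exactly the crux, and you give no mechanism: $\overline{\phi}$ acts on $B$, not on its degree-zero piece, and there is no automatic restriction. In \cite{adv} this is where the infiniteness of $k$ enters essentially---after arranging that $x_1,\dots,x_m$ and $y$ lie in the invariant ring, one specialises the $x_i$ to a general $k$-point to obtain a genuine exponential map on $k[Z,T]$---whereas you mention the infinite-field hypothesis only in passing. Finally, in your last step you conflate the planar map with $\overline{\phi}$: you obtain $Z_1$ from the planar action but then write $\overline{\phi}_U(Z_1)=Z_1$ and invoke the ideal $x_1^{r_1}\cdots x_m^{r_m}B$, which is no longer available once you are working purely in $k[Z,T]$. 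The bound $\deg_{T_1}h\le 1$ does not follow from the divisibility you wrote; in the cited argument it comes instead from a degree comparison in the slice decomposition of \thref{prop}(ii), using that $y=h/(x_1^{r_1}\cdots x_m^{r_m})$ lies in the invariant ring.
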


The following result gives some necessary and sufficient conditions for a ring of type \eqref{A1} to be a UFD   (\cite[Proposition 3.6]{asa}).

\begin{prop}\thlabel{ufd2}

	Let $H(X_{1}, \ldots, X_{m}, Z,T)= h(Z,T)+ (X_{1} \cdots X_{m})g$, for some $g \in k[X_1,\ldots,X_m,Z,T]$. Then the following statements are equivalent:
	\begin{enumerate}
		
		\item[$\rm( i)$] $A$ is a UFD.

		\item[$\rm(ii)$] For each $j, 1\leqslant j \leqslant m$, either $x_{j}$ is prime in $A$ or  $x_j \in A^{*}$.
		
		\item[$\rm(iii)$]  $h(Z,T)$ is either an irreducible element in $k[Z,T]$ or $h(Z,T) \in k^{*}$.
	\end{enumerate}
\end{prop}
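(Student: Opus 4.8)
The plan is to prove the equivalences via the cycle $\mathrm{(iii)}\Rightarrow\mathrm{(ii)}\Rightarrow\mathrm{(i)}\Rightarrow\mathrm{(iii)}$, resting on two structural observations about $A$. First, inverting $x_1\cdots x_m$ lets one solve the defining relation for $y$, so that $A[(x_1\cdots x_m)^{-1}]=k[x_1^{\pm 1},\ldots,x_m^{\pm 1},z,t]=:L$, a localization of a polynomial ring and hence a UFD; in particular $A\subseteq L$ and $\operatorname{Frac}(A)=k(x_1,\ldots,x_m,z,t)$. Second, since $H\equiv h(Z,T)\pmod{X_j}$, killing $x_j$ gives $A/(x_j)\cong\big(k[Z,T]/(h)\big)[X_1,\ldots,\widehat{X_j},\ldots,X_m,Y]$.

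From the second observation one reads off the dictionary between $x_j$ and $h$: $A/(x_j)=0$ (i.e. $x_j\in A^{*}$) exactly when $h\in k^{*}$; $A/(x_j)$ is a nonzero domain (i.e. $x_j$ is prime) exactly when $k[Z,T]/(h)$ is a domain, i.e. when $h$ is irreducible; and $A/(x_j)$ is a nonzero non-domain (so $x_j$ is neither prime nor a unit) exactly when $h$ is a reducible non-unit. This yields $\mathrm{(ii)}\Leftrightarrow\mathrm{(iii)}$ at once, and shows the prime/unit status is the same for every $j$. For $\mathrm{(ii)}\Rightarrow\mathrm{(i)}$ I would apply Nagata's criterion: let $S$ be the multiplicative set generated by those $x_j$ that are prime. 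Since the remaining $x_j$ are units, $S^{-1}A=A[(x_1\cdots x_m)^{-1}]=L$ is a UFD, and $A$ is Noetherian, so $A$ is a UFD (and if every $x_j$ is a unit then $A=L$ already).

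The substantive direction is $\mathrm{(i)}\Rightarrow\mathrm{(iii)}$, and the key point I would isolate is that each $x_j$ is \emph{irreducible} in $A$ whenever $h\notin k^{*}$. Granting this, if $A$ is a UFD then every irreducible is prime, so each $x_j$ is prime, and the dictionary forces $h$ to be irreducible; together with the trivial case $h\in k^{*}$ this gives $\mathrm{(iii)}$. To prove irreducibility, view $A\subseteq L=k[x_1^{\pm1},\ldots,x_m^{\pm1},z,t]$, a $\mathbb{Z}^{m}$-graded domain with $\deg x_i=e_i$ and $\deg z=\deg t=0$. A factorization $x_j=ab$ in $A$ is a factorization in $L$ of the homogeneous element $x_j$; in a graded domain a homogeneous product forces homogeneous factors, so $a=\lambda\, x^{\underline\alpha}$, $b=\mu\, x^{\underline\beta}$ with $\lambda,\mu\in k[z,t]$, $\lambda\mu=1$ (hence $\lambda,\mu\in k^{*}$) and $\underline\alpha+\underline\beta=e_j$.

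The heart of the argument — and the step I expect to be the main obstacle — is then the claim that $A$ contains no monomial $\lambda\, x^{\underline\alpha}$ with a negative exponent (equivalently $A^{*}=k^{*}$ when $h\notin k^{*}$). I would prove this directly: if $\lambda\, x^{\underline\alpha}\in A$ with $\alpha_{i_0}<0$, first multiply by $\prod_{i\ne i_0}x_i^{\max(0,-\alpha_i)}\in k[x]\subseteq A$ to make all exponents except the $i_0$-th nonnegative, then multiply by $x_{i_0}^{-\alpha_{i_0}}$; this exhibits a genuine monomial $P=\lambda\prod_{i\ne i_0}x_i^{\alpha_i'}$ (with $\alpha_i'\ge 0$) lying in $(x_{i_0})A$. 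But $P$ has nonzero image in $A/(x_{i_0})\cong\big(k[Z,T]/(h)\big)[\ldots]$, since $\lambda\in k^{*}$ and $h\notin k^{*}$ make $k[Z,T]/(h)\ne 0$ — a contradiction. With this in hand, $\underline\alpha,\underline\beta\ge 0$ together with $\underline\alpha+\underline\beta=e_j$ forces one of $a,b$ to be a unit, so $x_j$ is irreducible, closing the cycle. The delicate point is that this no-negative-monomial fact genuinely uses $h\notin k^{*}$ (specialising $z,t$ to make $h$ a nonzero constant turns every $x_i$ into a unit and makes it false), so the grading on $L$ and the quotient $A/(x_{i_0})$ must be used together.
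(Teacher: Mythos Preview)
The proposition is not proved in this paper; it is quoted from \cite[Proposition 3.6]{asa}, so there is no in-paper argument to compare yours against.

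That said, your proof is correct and self-contained. The two structural observations --- that inverting $x_1\cdots x_m$ yields the Laurent polynomial ring $L=k[x_1^{\pm1},\ldots,x_m^{\pm1},z,t]$, and that $A/(x_j)\cong\big(k[Z,T]/(h)\big)^{[m]}$ because $H\equiv h\pmod{X_j}$ --- are precisely the right inputs, and they immediately give the dictionary establishing $\mathrm{(ii)}\Leftrightarrow\mathrm{(iii)}$. Nagata's criterion then handles $\mathrm{(ii)}\Rightarrow\mathrm{(i)}$ cleanly, since $A$ is a finitely generated $k$-algebra and hence Noetherian.

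Your treatment of $\mathrm{(i)}\Rightarrow\mathrm{(iii)}$ via irreducibility of $x_j$ and the $\bZ^m$-grading on $L$ is sound. The one step worth making explicit is that in an integral domain graded by a torsion-free abelian group, any factorisation of a homogeneous element has homogeneous factors: choose a total group order on $\bZ^m$ compatible with addition (e.g.\ lexicographic) and compare the extremal homogeneous components of the factors. With that in hand, $a=\lambda\,x^{\underline\alpha}$ and $b=\mu\,x^{\underline\beta}$ with $\lambda\mu=1$ in $k[z,t]$, forcing $\lambda,\mu\in k^*$. Your ``no negative-exponent monomial'' argument is then exactly right, and genuinely uses both the hypothesis $h\notin k^*$ (so that $A/(x_{i_0})\neq 0$) and the explicit shape of $A/(x_{i_0})$ to see that the image of the monomial $P$ is nonzero there. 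Note that the same claim also shows $x_j\notin A^*$ when $h\notin k^*$ (else $x_j^{-1}\in A$ would be a negative-exponent monomial), so $x_j$ is indeed irreducible and not merely without proper factorisation.
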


	Next we quote the following lemma from \cite[Lemma 3.1]{asa}.
	
	\begin{lem}\thlabel{rs}
		Let $E:=k[X_{1}, \ldots, X_{m}, T]$, $C= E[Z,Y]$, $g \in E[Z]$  and 
		$G=X_{1}^{r_{1}} \cdots X_{m}^{r_{m}}Y- X_{1} \cdots X_{m}g+Z \in C$.
		Then $C=E[G]^{[1]}$.
    \end{lem}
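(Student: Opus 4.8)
\noindent
The plan is to prove the equivalent assertion that $G$ is a \emph{coordinate} (variable) of the polynomial ring $C=E[Z,Y]$ over $E$, i.e. that there is $W\in C$ with $C=E[G,W]$ and $G,W$ algebraically independent over $E$; this is exactly $C=E[G]^{[1]}$. Writing $a:=X_1^{r_1}\cdots X_m^{r_m}$ and $b:=X_1\cdots X_m$, and $q(Z):=Z-b\,g\in E[Z]$, the defining element takes the additively separated form $G=aY+q(Z)$, linear in $Y$ and polynomial in $Z$. I would first record the two easy extremes that guide the argument: if $q$ is already a coordinate of $E[Z]$ (for instance when $g$ is free of $Z$, so that $\deg_Z q\le 1$ with unit leading coefficient), then after the triangular $E$-automorphism $Z\mapsto q(Z)$ one has $G=aY+Z'$, for which the complement is simply $W=Y$ since $Z'=G-aY$. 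The difficulty is entirely that $q$ may have large $Z$-degree while $a$ is a non-unit, so that neither $Y$ nor $Z$ serves as a global complement.

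The key local input is a nilpotence observation modulo $a$. Since $b^{N}\in aE$ for $N=\max_i r_i$ (as $\prod X_i^{N}$ is divisible by $\prod X_i^{r_i}$), the image $\bar b$ of $b$ is nilpotent in $S:=E/aE$. Hence the $S$-algebra endomorphism of $S[Z]$ sending $Z\mapsto \bar q(Z)=Z-\bar b\,\bar g$ is a perturbation of the identity by a nilpotent term and is therefore an automorphism, its inverse being produced by the terminating iteration $\tau_0=Z,\ \tau_{j+1}=Z+\bar b\,\bar g(\tau_j)$. Thus $\bar q$ is a coordinate of $S[Z]$, and consequently $\bar G=\bar q(Z)$ is a coordinate of $S[Z,Y]$ with complement $Y$. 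On the other hand, over the localization $E_a:=E[1/a]$ the element $a$ is invertible, so $Y=a^{-1}(G-q(Z))$ and $E_a[Z,Y]=E_a[G][Z]$; that is, $G$ is a coordinate over $E_a$ with complement $Z$. So $G$ is a coordinate both after inverting $a$ and modulo $a$.

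It remains to globalize, and this is the main obstacle: to manufacture a single $W\in C$ witnessing that $G$ is an $E$-coordinate of $C$ out of the complement $Z$ valid over $E_a$ and the complement $Y$ valid modulo $a$. I would carry this out by a patching argument. The quotient $C/(G)$ is a finitely presented flat $E$-algebra which, by the two computations above, restricts to $E_a^{[1]}$ over $E_a$ and to $S^{[1]}$ modulo $a$; hence it is an $\A^{1}$-fibration that is trivial on each piece of the cover $\{a\ne 0\}$, $\{a=0\}$. Since $E$ is a regular UFD (a polynomial ring, so $\operatorname{Pic}E=0$), the $\A^{1}$-bundle $C/(G)$ is trivial, and lifting the trivialization shows $G$ is a coordinate. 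Equivalently, one constructs $W$ directly by a Newton-type iteration lifting the automorphism of $S[Z]$ to an $E$-automorphism of $C$, the extra variable $Y$ (with its non-unit coefficient $a$) being precisely what absorbs the error terms at each stage; I expect the bookkeeping in this lift to be the technically delicate point.

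Finally, as a characteristic-free device compatible with this analysis, I would exhibit the explicit exponential map $\phi$ on $C$ given by $\phi_U(Z)=Z-aU$ and $\phi_U(Y)=Y+a^{-1}\big(q(Z)-q(Z-aU)\big)$, where the bracket is divisible by $a$ in $E[Z,U]$, so that $\phi$ is well defined in every characteristic; a short check gives the cocycle identity and $G\in C^{\phi}$. Applying \thref{prop}(ii) to $Z$, whose image has $U$-degree $1$ with leading coefficient $-a$, yields $C[1/a]=C^{\phi}[1/a][Z]$ and $\td_E C^{\phi}=1$, whence a degree comparison forces $C^{\phi}[1/a]=E_a[G]$. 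Promoting this to the global equality $C^{\phi}=E[G]$ together with a slice is once more exactly the patching over $\Spec E$ described above, and this delivers $C=E[G]^{[1]}$.
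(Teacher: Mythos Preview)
The paper does not prove this lemma at all: it merely quotes it from \cite[Lemma~3.1]{asa}, so there is no proof here to compare against.  On its own merits, your proposal correctly isolates the two governing facts --- that $\bar b$ is nilpotent in $E/aE$ (so $Z\mapsto Z-\bar b\bar g(Z)$ is an automorphism of $(E/aE)[Z]$) and that $G$ becomes a coordinate after inverting $a$ --- but the crucial globalization step is left as a sketch, and as written it does not close.

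Two concrete gaps.  First, the ``cover'' $\{a\neq 0\}\cup\{a=0\}$ is an open set together with its closed complement, not a Zariski open cover, so the statement that the $\A^1$-fibration is ``trivial on each piece of the cover'' does not feed into any standard patching or bundle argument; to invoke $\operatorname{Pic}(E)=0$ you would first need Zariski-local triviality of $C/(G)$ at \emph{every} prime of $E$, in particular at primes containing several of the $X_i$, and in arbitrary characteristic this is not for free.  Second, even granting $C/(G)\cong E^{[1]}$, the step ``lifting the trivialization shows $G$ is a coordinate'' is not automatic over a base ring: one must argue that a lift $\tilde W$ of a generator of $C/(G)$ satisfies $E[G,\tilde W]=C$, and this needs an argument (Nakayama does not apply since $G$ is not in the Jacobson radical).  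Your exponential-map paragraph is correct as far as it goes --- the map is well defined, $G\in C^{\phi}$, and one can in fact show $C^{\phi}=E[G]$ using that $\{\bar G^{j}\}$ is an $(E/aE)$-basis of $(E/aE)[Z]$ --- but, as you yourself note, the passage from $C^{\phi}=E[G]$ to $C=E[G]^{[1]}$ still requires producing a slice, and exponential maps need not have slices in general (the Danielewski surfaces in this very paper are the standard counterexample).  The proof in \cite{asa} is presumably a direct, elementary construction of the complementary coordinate via the iteration $P_0=Z$, $P_{j+1}=Z+bg(P_j)$ you describe, carried out explicitly rather than packaged as patching; that explicit computation is exactly the ``bookkeeping'' you flag but do not perform.
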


The next result gives a necessary and sufficient condition for a ring $A$ of type \eqref{A1} to be $k^{[m+2]}$ (\cite[Theorem 3.10]{asa}).

\begin{thm}\thlabel{equiv}
	Let $H(X_{1}, \ldots , X_{m},Z,T)=h(Z,T)+ (X_{1} \cdots X_{m})g$, for some $g \in k[X_1,\ldots,X_m,Z,T]$. Then the following statements are equivalent:
	\begin{enumerate}

		\item[\rm(i)] $A=k^{[m+2]}$.
		
		\item[\rm(ii)] $k[Z,T]=k[h(Z,T)]^{[1]}$.
		
	\end{enumerate}
\end{thm}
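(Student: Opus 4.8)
The two implications are quite different in character, so I would prove them separately, expecting $(i)\Rightarrow(ii)$ to be the \emph{hard} one.

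For $(ii)\Rightarrow(i)$ the plan is to massage the defining relation into the normal form of \thref{rs} by a coordinate change in the pair $(Z,T)$ alone. Assuming $k[Z,T]=k[h]^{[1]}$, pick $W$ with $k[Z,T]=k[h,W]$ and set $Z_1:=-h$ and $T_1:=W$, so that $k[X_1,\dots,X_m,Y,Z,T]=k[X_1,\dots,X_m,Y,Z_1,T_1]$ and, with $E:=k[X_1,\dots,X_m,T_1]$, one has $g\in k[X_1,\dots,X_m,Z,T]=E[Z_1]$. The defining element $X_1^{r_1}\cdots X_m^{r_m}Y-h-(X_1\cdots X_m)g$ then equals $X_1^{r_1}\cdots X_m^{r_m}Y-(X_1\cdots X_m)g+Z_1$, which is exactly the polynomial $G$ of \thref{rs}. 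That lemma gives $E[Z_1,Y]=E[G]^{[1]}$, and quotienting by $G$ yields $A\cong E^{[1]}=k^{[m+2]}$. I expect no obstacle here beyond the bookkeeping of the coordinate change.

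For $(i)\Rightarrow(ii)$ I would first reduce to $k$ infinite: since $A\otimes_k L=L^{[m+2]}$ for every field extension $L$ and since the coordinate property descends along separable extensions (\thref{sepco}), it suffices to prove $L[Z,T]=L[h]^{[1]}$ for one infinite separable $L\supseteq k$ (for finite $k$ take $L=k(s)$), so we may assume $k$ is infinite. Next I would extract the structural consequences of $A=k^{[m+2]}$. As $A$ is then a UFD, \thref{ufd2} forces $h$ to be a unit or irreducible; the unit case is impossible, since $h\in k^{*}$ would give $x_1\cdots x_m\mid h$ in $A$, making each $x_i$ a unit and $A^{*}\supsetneq k^{*}$. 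Hence $h$ is irreducible and each $x_i$ is prime. Since $k$ is infinite and $k[x_1,\dots,x_m,z,t]\subsetneq A=\dk(A)$, \thref{dk} applies and produces coordinates $\{Z_1,T_1\}$ of $k[Z,T]$ and $a_0,a_1\in k^{[1]}$ with $h=a_0(Z_1)+a_1(Z_1)T_1$, where irreducibility forces $\gcd(a_0,a_1)=1$. The whole problem now collapses to a single point: showing $a_1\in k^{*}$, for then $h$ is visibly a coordinate complementary to $Z_1$ and $(ii)$ holds.

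The step $a_1\in k^{*}$ is where I expect the real difficulty, and it cannot be reached by the coarse invariants used so far. Indeed, if $a_1$ is nonconstant the resulting ring is still smooth and factorial---these are precisely the smooth factorial rings that furnish the counterexamples to cancellation in positive characteristic---so neither regularity, nor factoriality, nor the Derksen invariant can distinguish it from a polynomial ring. The plan is therefore to run a Makar--Limanov argument: assuming $a_1\notin k^{*}$, I would equip $A$ with a suitable admissible proper $\mathbb{Z}$-filtration, pass to $gr(A)$, and use the homogenization of exponential maps (\thref{dhm}) to show that every exponential map on $A$ fixes $z$, so that $z\in\ml(A)$ and hence $\ml(A)\neq k$, contradicting $\ml(A)=k$ for $A=k^{[m+2]}$. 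Establishing that $z$ survives in $gr(A)^{\overline{\phi}}$ for every homogeneous $\overline{\phi}$---equivalently, that the graded ring attached to the twisted relation is rigid in the $z$-direction---is the technical heart of the argument and the place where the hypothesis $a_1\notin k^{*}$ must be used decisively.
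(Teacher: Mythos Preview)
The paper does not prove this theorem here; it is quoted from \cite[Theorem 3.10]{asa}. Your argument for $(ii)\Rightarrow(i)$ via \thref{rs} is correct, and your opening moves for $(i)\Rightarrow(ii)$---the reduction to an infinite field through \thref{sepco}, the appeal to \thref{ufd2} for irreducibility of $h$, the exclusion of $h\in k^{*}$, and the use of \thref{dk} to reach $h=a_0(Z_1)+a_1(Z_1)T_1$---are all sound.

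The genuine gap is your proposed endgame: forcing $a_1\in k^{*}$ by showing that $z\in\ml(A)$ whenever $a_1\notin k^{*}$. This cannot succeed. First, \thref{ml}(a) already gives $\ml(A)\subseteq k[x_1,\dots,x_m]$, so neither $z$ nor $z_1$ can ever lie in $\ml(A)$. More decisively, the paper itself proves in \thref{ml1}(i) (and records in \thref{ml2}(b)) that for $H=h=a_0(Z_1)+a_1(Z_1)T_1$ with $a_1\neq 0$ one has $\ml(A)=k$ \emph{regardless of whether $a_1$ is a unit}: the explicit maps $\phi_j$ built there move $x_j$ while fixing $y$ and $z$, and together with the standard maps fixing all $x_i$ they drive $\ml(A)$ down to $k$. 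Since $g=0$ is a legitimate instance of the theorem you are proving, this already shows that no filtration or homogenisation argument can manufacture the inequality $\ml(A)\neq k$ from the hypothesis $a_1\notin k^{*}$ alone. The Makar--Limanov invariant is simply blind to the distinction you need; your own parenthetical about ``counterexamples to cancellation'' is slightly off here, because those arise from \emph{nontrivial lines}, which by the contrapositive of \thref{dk} are precisely the $h$ that are \emph{not} of the form $a_0(Z_1)+a_1(Z_1)T_1$.

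What does work, as one sees in the proof of the neighbouring \thref{thpoly}, is not a global computation of $\ml(A)$ but the exploitation of a \emph{single} exponential map whose invariant ring contains $x_1,\dots,x_m$ and $y$ (after passing to $\widehat{A}$ via \thref{fdk1}); localising at $k[\widehat{x}_1,\dots,\widehat{x}_m,\widehat{y}]$ then yields a non-rigid affine curve $L[Z,T]/(\mu-a_0-a_1T_1)$ over $L=k(X_1,\dots,X_m,Y)$, and rigidity of such curves forces $a_1\in k^{*}$. The delicate point---and the reason the full argument is deferred to \cite{asa}---is producing such a map directly from $A=k^{[m+2]}$ without circularity, since \thref{rk1} in this paper already invokes \thref{equiv}.
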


We also fix a notation which will be used in the paper. Let $C$ be a ring and $f \in C[W]$. Suppose $f=\sum_{i=0}^{n} c_i W^i$, for some $c_i \in C$, $0 \leqslant i \leqslant n$. Then $f_W$ will denote the polynomial 
$$
f_W:= \sum_{i=1}^{n} ic_i W^{i-1}.
$$

\section{Generalised Danielewski Varieties}	
 
 We first fix some notation which will be used throughout this section. For positive integers $r_1,\ldots,r_m$ and a polynomial $F=F(T_1,\ldots,T_m,V) \in k^{[m+1]}$ which is monic in $V$ with $\deg_{V} F >1$, $B(\underline{r},F)$ will denote the ring
$$
 B(\underline{r},F):=B(r_1,\ldots,r_m,F) = \frac{k[T_1,\ldots,T_m,U,V]}{(T_1^{r_1}\cdots T_m^{r_m}U-F(T_1,\ldots,T_m,V))},
$$
where $\underline{r}:=(r_1,\ldots,r_m) \in \mathbb{Z}^m_{\geqslant 1}$.
Set $P(V):=F(0,\ldots,0,V)$ and $d:= \deg_{V} P= \deg_{V} F (>1)$. Further, when $F$ is understood from the context, we will use the notation $B_{\underline{r}}$ to denote the above ring $B(\underline{r}, F)$, i.e., 
 \begin{equation}\label{b}
	B_{\underline{r}}:= B(r_1,\ldots,r_m,F) = \frac{k[T_1,\ldots,T_m,U,V]}{(T_1^{r_1}\cdots T_m^{r_m}U-F(T_1,\ldots,T_m,V))}.
\end{equation}
We call the varieties defined by these rings ``Generalised Danielewski varieties".
 Let $t_1,\ldots,t_m,u,v$ denote respectively the images of $T_1,\ldots,T_m,U,V$ in $B_{\underline{r}}$ and $R$ denote the subring $k[t_1,\ldots,t_m,v]$ of $B_{\underline{r}}$. 
	
Note that $R=k[t_1,\ldots,t_m,v] (=k^{[m+1]}) \hookrightarrow B_{\underline{r}} \hookrightarrow B_{\underline{r}}[t_1^{-1},\ldots,t_m^{-1}]=k[t_1^{\pm 1},\ldots,t_m^{\pm 1},v]$ and $F \in R$. Fix $(e_{1}, \ldots, e_{m}) \in \mathbb{Z}^{m}$. This $m$-tuple defines a proper $\mathbb{Z}$-filtration $\{B_n\}_{n \in \mathbb{Z}}$ on $B_{\underline{r}}$ as follows:

\noindent
 Set $C_n:= \bigoplus_{e_{1}i_{1}+\cdots+e_{m}i_{m}=n} k[v] t_1^{i_1} \ldots t_m^{i_m}$. Then the ring $k[t_1^{\pm 1},\ldots,t_m^{\pm 1},v]$ has the following $\mathbb{Z}$-graded structure (with $wt(t_i)=e_i$, $1 \leqslant i \leqslant m$):
$$
k[t_{1}^{\pm 1},\ldots,t_{m}^{\pm 1},v] = \bigoplus_{n \in \mathbb{Z}} C_{n}= \bigoplus_{n \in \mathbb{Z}, e_{1}i_{1}+\cdots+e_{m}i_{m}=n} k[v] t_{1}^{i_{1}} \cdots t_{m}^{i_{m}}.
$$
For every $n \in \mathbb{Z}$, set $B_{n} := \bigoplus_{i \leqslant n} C_{n} \cap B_{\underline{r}}$. 

Then $\{B_{n}\}_{n \in \mathbb{Z}}$ defines a proper $\mathbb{Z}$-filtration on $B_{\underline{r}}$ induced by $(e_1,\ldots,e_m)$ and for every $j$, $1 \leqslant j \leqslant m$, $t_{j} \in B_{e_{j}} \setminus B_{e_{j}-1}$.

 Let $e:= \deg (F)$ with respect to the given filtration. Then $u \in B_{\ell} \setminus B_{\ell-1}$, where $\ell=e-(r_{1}e_{1}+\cdots+r_{m}e_{m})$.
 Set 
 $$\Lambda:=\biggl\{ (\underline{i},j,q):=(i_1,\ldots,i_m,j,q) \in \mathbb{Z}^m_{\geqslant0} \times \mathbb{Z}_{>0} \times \mathbb{Z}_{\geqslant 0} \mid i_s <r_s \text{~for some~} s, 1\leqslant s \leqslant m \biggr\}.
 $$
Using the relation $t_1^{r_1}\cdots t_m^{r_m}u=F(t_1,\ldots,t_m,v)$, it can be noted that every element $b \in B_{\underline{r}}$ can be uniquely expressed as
\begin{equation}\label{b1}
	b= \widetilde{b}(t_1,\ldots,t_m,v)+ \sum_{(\underline{i},j,q) \in \Lambda} \alpha_{\underline{i}\, jq}\, t_{1}^{i_{1}} \cdots t_{m}^{i_{m}} u^j v^q ,
\end{equation}
where $\widetilde{b} \in R (=k[t_1,\ldots,t_m,v])$ and $\alpha_{\underline{i}\, jq} \in k^{*}$.

 Now since the filtration $\{B_n\}_{n \in \mathbb{Z}}$ is induced from the graded structure of the ring $B_{\underline{r}}[t_1^{-1},\ldots,t_m^{-1}]$, from the expression \eqref{b1} it follows that the filtration $\{B_n\}_{n \in \mathbb{Z}}$ is admissible with respect to the generating set $\Gamma=\{t_1,\ldots,t_m,u,v\}$ of $B_{\underline{r}}$ and the associated graded ring $gr(B_{\underline{r}})=\bigoplus_{n \in \mathbb{Z}} \frac{B_n}{B_{n-1}}$ is generated by the image of $\Gamma$ in $gr(B_{\underline{r}})$. 

The following lemma exhibits the structure of $gr(B_{\underline{r}})$.

\begin{lem}\thlabel{gr}
 
	 Let $F_e$ denote the highest degree homogeneous summand of $F$. If, for every $i$, $t_i \nmid F_e$ in $R=k[t_1,\ldots,t_m,v]$, then $$gr(B_{\underline{r}}) \cong \frac{k[T_1,\ldots,T_m,U,V]}{(T_1^{r_1}\cdots T_m^{r_m}U-F_e(T_1,\ldots,T_m,V))}.$$  
\end{lem}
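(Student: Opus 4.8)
The plan is to describe $\gr(B_{\underline{r}})$ explicitly by passing to the graded overring $k[t_1^{\pm 1},\ldots,t_m^{\pm 1},v]$ and tracking how the defining relation homogenises. First I would set up the target ring $\widetilde{B} := k[T_1,\ldots,T_m,U,V]/(T_1^{r_1}\cdots T_m^{r_m}U - F_e)$ and construct a surjective graded $k$-algebra homomorphism $\Psi\colon \widetilde{B}\to \gr(B_{\underline{r}})$ sending the classes of $T_i,V$ to $\rho(t_i),\rho(v)$ and the class of $U$ to $\rho(u)$. To see that this is well defined I must check that the relation is respected, i.e.\ that $\rho(t_1)^{r_1}\cdots\rho(t_m)^{r_m}\rho(u) = F_e(\rho(t_1),\ldots,\rho(t_m),\rho(v))$ in $\gr(B_{\underline{r}})$. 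This is where the weight bookkeeping enters: since $u\in B_\ell\setminus B_{\ell-1}$ with $\ell = e-(r_1e_1+\cdots+r_me_m)$, the product $t_1^{r_1}\cdots t_m^{r_m}u$ lands in degree exactly $e=\deg(F)$, which matches the degree of the top summand $F_e$. The hypothesis $t_i\nmid F_e$ for every $i$ guarantees that $F_e$ does not collapse when we pass to the associated graded ring (no cancellation among the top-degree monomials forces a drop in filtration-degree), so $\rho$ of the relation $t_1^{r_1}\cdots t_m^{r_m}u = F$ produces precisely $T_1^{r_1}\cdots T_m^{r_m}U = F_e$ and nothing more.

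Next I would establish surjectivity of $\Psi$, which is immediate because $\gr(B_{\underline{r}})$ is generated by the images $\rho(t_1),\ldots,\rho(t_m),\rho(u),\rho(v)$ of the generating set $\Gamma$, as noted just before the lemma. The substance of the argument is injectivity. For this I would use the normal form \eqref{b1}: every $b\in B_{\underline{r}}$ is uniquely written as $\widetilde{b}(t_1,\ldots,t_m,v)+\sum_{(\underline{i},j,q)\in\Lambda}\alpha_{\underline{i}\,jq}\,t_1^{i_1}\cdots t_m^{i_m}u^jv^q$, with the exponent constraint that for each monomial involving $u$ some $i_s<r_s$. I would show that the monomials $T_1^{i_1}\cdots T_m^{i_m}U^jV^q$ with $(\underline{i},j,q)\in\Lambda$ together with the pure $R$-monomials form a $k$-basis of $\widetilde{B}$, exactly mirroring the normal form for $B_{\underline{r}}$; this is a direct consequence of the fact that the relation in $\widetilde{B}$ is $T_1^{r_1}\cdots T_m^{r_m}U=F_e$, which lets one reduce any occurrence of $T_1^{r_1}\cdots T_m^{r_m}$ times $U$ just as the relation $t_1^{r_1}\cdots t_m^{r_m}u=F$ does in $B_{\underline{r}}$. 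Comparing the graded pieces degree by degree, a nonzero homogeneous element of $\widetilde{B}$ maps to a nonzero element of $\gr(B_{\underline{r}})$, giving injectivity.

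The main obstacle I anticipate is verifying carefully that the hypothesis $t_i\nmid F_e$ is precisely what prevents a degeneration of the homogenised relation, and that no spurious relations are introduced in $\gr(B_{\underline{r}})$ beyond $T_1^{r_1}\cdots T_m^{r_m}U=F_e$. Concretely, one must rule out the possibility that $\rho(t_1^{r_1}\cdots t_m^{r_m}u)$ or $\rho(F)$ lands in a strictly lower filtration level than expected, which would replace $F_e$ by a smaller summand or force $U$ into an unexpected degree; the divisibility hypothesis ensures the leading form $F_e$ genuinely survives and that the weight $\ell$ assigned to $u$ is consistent across the relation. Once the normal form \eqref{b1} and its analogue in $\widetilde{B}$ are matched up, the graded isomorphism follows by comparing homogeneous components, and I would conclude that $\Psi$ is the desired isomorphism.
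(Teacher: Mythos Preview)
Your overall architecture---build a surjection $\Psi\colon \widetilde B \to \gr(B_{\underline r})$ and then argue injectivity---is the same as the paper's. But you have misplaced the role of the hypothesis $t_i\nmid F_e$, and this leaves a real gap in the injectivity step.

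The relation $\rho(t_1)^{r_1}\cdots\rho(t_m)^{r_m}\rho(u)=F_e(\rho(t_1),\ldots,\rho(t_m),\rho(v))$ holds in $\gr(B_{\underline r})$ \emph{unconditionally}: $F_e$ is by definition the leading form of $F$, so $\rho(F)=F_e$; and $u$ sits in degree $\ell=e-\sum r_ie_i$ by construction, so there is nothing to ``prevent from collapsing''. Likewise the normal form \eqref{b1} gives a $k$-basis of $\widetilde B$ regardless of whether any $t_i$ divides $F_e$ (the reduction argument you sketch works for any $F_e\in k[T_1,\ldots,T_m,V]$). So neither of the places where you invoke the hypothesis actually uses it.

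Where the hypothesis is genuinely needed is exactly the step you assert without proof: that a nonzero homogeneous element of $\widetilde B$ maps to a nonzero element of $\gr(B_{\underline r})$. This fails when some $t_i\mid F_e$. For instance with $m=1$, $r_1=2$, $F_e=T_1V$, the element $T_1U-V$ is a nonzero normal-form element of $\widetilde B$, yet its image $\overline{t_1}\,\overline{u}-\overline{v}$ vanishes in $\gr(B_{\underline r})\subset k[t_1^{\pm1},v]$ because $\overline{u}=v/t_1$ there. Your normal-form comparison does not rule this out.

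The paper closes this gap differently and more economically. It observes that $\gr(B_{\underline r})$ embeds in $\gr(k[t_1^{\pm1},\ldots,t_m^{\pm1},v])\cong k[t_1^{\pm1},\ldots,t_m^{\pm1},v]$, so $\overline{t_1},\ldots,\overline{t_m},\overline v$ are algebraically independent and $\dim\gr(B_{\underline r})=m+1$. The hypothesis $t_i\nmid F_e$ is then used to conclude that $T_1^{r_1}\cdots T_m^{r_m}U-F_e$ is irreducible (it is linear in $U$ with coprime coefficients), so $\widetilde B$ is an integral domain of dimension $m+1$. A surjection between integral domains of the same finite Krull dimension is an isomorphism, and the proof is complete. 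If you want to salvage your normal-form route, the cleanest fix is to insert exactly this observation: the hypothesis makes $\widetilde B$ a domain, whence $\Psi$ has no kernel.
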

\begin{proof}
Since $\deg(F)=e$, $t_1^{r_1} \cdots t_m^{r_m}u=F \in B_e \setminus B_{e-1}$ and hence, 
 \begin{equation}\label{F}
 	 \overline{t_1}^{r_1} \cdots \overline{t_m}^{r_m} \overline{u}=F_e(\overline{t_1},\ldots,\overline{t_m},\overline{v})
 \end{equation}
in $gr(B_{\underline{r}})$, where $\overline{t_1},\ldots,\overline{t_m},\overline{u},\overline{v}$ denote the images of $t_1,\ldots,t_m,u,v$ in $gr(B_{\underline{r}})$. Since $gr(B_{\underline{r}})$ can be identified with a subring of $gr(k[t_1^{\pm 1},\ldots,t_m^{\pm 1}, v]) \cong k[t_1^{\pm 1},\ldots,t_m^{\pm 1}, v]$, we have $\overline{t_1},\ldots,\overline{t_m},\overline{v}$ are algebraically independent in $gr(B_{\underline{r}})$ and hence $\dim gr(B_{\underline{r}})=m+1$.
	As $t_i \nmid F_e$ in $R$ for every $i$, 
	$
	\frac{k[T_1,\ldots,T_m,U,V]}{(T_1^{r_1}\cdots T_m^{r_m}U-F_e(T_1,\ldots,T_m,V))}
	$ 
	is an integral domain, and its dimension is $m+1$. Hence by \eqref{F}, we have the isomorphism:
	$$
	gr(B_{\underline{r}}) \cong \frac{k[T_1,\ldots,T_m,U,V]}{(T_1^{r_1}\cdots T_m^{r_m}U-F_e(T_1,\ldots,T_m,V))} .
	$$
\end{proof}

In \cite{dub}, Dubouloz showed that for $k=\mathbb{C}$, 
$\ml(B_{\underline{r}})=\mathbb{C}[t_1,\ldots,t_m]$, when $\underline{r} \in \mathbb{Z}^m_{>1}$. When $k$ is an algebraically closed field of characteristic zero and $F \in k[V]$, a similar result appears in \cite[Lemma 6.2]{Guff}. We extend this result over any field (of any characteristic) and arbitrary $F$. First we recall the following lemma (\cite{com}, Lemma 3.5).

\begin{lem}\thlabel{m1}
	Let $r_1>1$ and $q \in k[V]$ be such that $\deg_V q >1$. Then there is no non-trivial exponential map $\phi$ on $B(r_1,q) = k[T_1,U,V]/(T_1^{r_1}U-q(V))$ such that $u \in B(r_1,q)^{\phi}$, where $u$ denotes the image of $U$ in $B(r_1,q)$. 
\end{lem}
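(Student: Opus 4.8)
The plan is to localise the problem to the generic fibre over $k[u]$, where $B(r_1,q)$ becomes the coordinate ring of a curve, and then exploit the rigidity of that curve.

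Write $B:=B(r_1,q)=k[t_1,u,v]$ with the relation $t_1^{r_1}u=q(v)$, and suppose, towards a contradiction, that there is a non-trivial exponential map $\phi$ on $B$ with $u\in B^{\phi}$. Since $k\subseteq B^{\phi}$ and $B^{\phi}$ is a subring, $k[u]\subseteq B^{\phi}$, and by \thref{prop}(ii) we have $\td_k B^{\phi}=\td_k B-1=1$. Taking $S:=k[u]\setminus\{0\}\subseteq B^{\phi}\setminus\{0\}$ and applying \thref{prop}(iii), $\phi$ induces a non-trivial exponential map $\psi:=S^{-1}\phi$ on
$$D:=S^{-1}B=K[t_1,v]/(u\,t_1^{r_1}-q(v)),\qquad K:=k(u);$$
non-triviality is clear since $\td_K D=\td_k B-1=1$ whereas $\td_K D^{\psi}=\td_k B^{\phi}-1=0$. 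Thus $D$ is a one-dimensional affine domain over the field $K$ admitting a non-trivial exponential map.

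Next I would show that such a $D$ is forced to be a polynomial ring in one variable over a field. By \thref{prop}(i) the ring $D^{\psi}$ is factorially closed, hence algebraically closed in $D$; and $\td_K D^{\psi}=0$, so $D^{\psi}$ is a domain algebraic over the field $K$, i.e.\ a field $K'$ with $K\subseteq K'\subseteq D$ and $K'$ algebraic over $K$ (it is the algebraic closure of $K$ in $D$). Applying \thref{prop}(ii) to $\psi$, the leading coefficient $c$ of a minimal-degree element of $D\setminus D^{\psi}$ lies in $D^{\psi}=K'$ and is therefore a unit of $D$; hence $D=D[c^{-1}]=D^{\psi}[c^{-1}][x]=K'[x]=K'^{[1]}$.

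It remains to derive a contradiction from $D\cong K'^{[1]}$, and this is the heart of the matter: I must show that the affine curve $C:=\Spec D$, defined by $u\,t_1^{r_1}=q(v)$ over $K$, is not isomorphic to an affine line over any algebraic field extension of $K$. This is exactly where both hypotheses $r_1>1$ and $d:=\deg_V q>1$ enter. The plan is to pass to $\bar K$ and study the projection $v\colon C\to \A^1$, which realises $D$ as the degree-$r_1$ cyclic-type cover $t_1^{r_1}=q(v)/u$. A polynomial ring $K'^{[1]}$ is normal, geometrically connected, and on its smooth completion has a single geometric place at infinity and geometric genus $0$; I would contradict one of these for $C$. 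When $q$ has a repeated root (or, when $\mathrm{char}\,k\mid r_1$, through a Frobenius factorisation) the defining cover is singular or non-reduced and $D$ fails to be normal; and when $C$ is normal, a Riemann--Hurwitz computation for $t_1^{r_1}=q(v)/u$ yields either $\gcd(r_1,d)\geqslant 2$ places over $v=\infty$ or geometric genus $\tfrac{(r_1-1)(d-1)}{2}>0$, using $r_1,d>1$. In every case $C\not\cong\A^1_{K'}$, the desired contradiction.

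The main obstacle is precisely this last geometric step: establishing the rigidity of the generic-fibre curve \emph{uniformly in arbitrary characteristic}. The delicate points are the wild-ramification and inseparability phenomena arising when $\mathrm{char}\,k$ divides $r_1$ or when $q$ is inseparable; these I would handle by separating the non-normal case, where normality alone already rules out $K'^{[1]}$, from the normal case, where the genus and places-at-infinity count applies. By contrast the earlier reductions are routine; all the content lies in showing that $u\,t_1^{r_1}=q(v)$ is never an affine line.
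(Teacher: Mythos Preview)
The paper does not actually prove \thref{m1}; it is quoted verbatim from \cite[Lemma~3.5]{com} and used as the base case of the induction in \thref{m2}. So there is no ``paper's own proof'' to compare against here.

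That said, your reduction is correct and clean: localising at $S=k[u]\setminus\{0\}$ and invoking \thref{prop}(ii),(iii) does force the one–dimensional $K$-algebra $D=K[t_1,v]/(ut_1^{r_1}-q(v))$ to be a polynomial ring $K'^{[1]}$ over a field $K'$ algebraic over $K=k(u)$, and the whole lemma is then equivalent to showing this never happens when $r_1>1$ and $\deg q>1$. You are right that this last step is where all the content lies, and your outline (normality failure in the degenerate cases, genus or places-at-infinity count in the smooth case) is the natural geometric route. The delicate point you flag --- wild ramification and inseparability when $p\mid r_1$ or $q'\equiv 0$ --- is real: for instance when $p\mid r_1$ the cover $t_1^{r_1}=q(v)/u$ is purely inseparable over $K(v)$, Riemann--Hurwitz is unavailable, and non-normality must be argued directly over $K$ (not over $\bar K$, since $K'/K$ need not be separable and passing to $\bar K$ can already destroy reducedness of $K'^{[1]}$). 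This can be done, but it is more than a sketch.

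By contrast, the argument in the cited source \cite{com}, and the style of proof used throughout the present paper (cf.\ \thref{m2}, \thref{mld}), is purely algebraic: one puts a $\mathbb{Z}$-grading on $B$ (e.g.\ $wt(t_1)=-1$, $wt(v)=0$, $wt(u)=r_1$), homogenises $\phi$ via \thref{dhm}, and derives a contradiction from factorial closedness of $B^{\phi}$ and the explicit monomial description of the graded pieces. That approach sidesteps all characteristic-$p$ curve geometry at the cost of a combinatorial case analysis. Your approach is more conceptual and would work uniformly once the curve step is nailed down, but as written the positive-characteristic cases remain a gap rather than a proof.
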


We now establish a generalisation of the above lemma. 

\begin{lem}\thlabel{m2}
	Let $\underline{r} \in \mathbb{Z}^m_{>1}$, and  $P(V) \in k[V]$ with $\deg_V P>1$. Then there is no non-trivial exponential map $\phi$ on $B(\underline{r},P)$, such that $u \in B(\underline{r},P)^{\phi}$. 
\end{lem}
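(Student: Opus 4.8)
The plan is to argue by contradiction, reducing the defining relation to its monomial model $t_1^{r_1}\cdots t_m^{r_m}u=v^{d}$ (where a homogeneous analysis is available) and then running an induction on $m$ with base case \thref{m1}; throughout I work with exponential maps rather than derivations, so that nothing uses the characteristic. Assume $\phi$ is a non-trivial exponential map on $B:=B(\underline{r},P)$ with $u\in B^{\phi}$. First dispose of the case $v\in B^{\phi}$: then $P(v)=t_1^{r_1}\cdots t_m^{r_m}u\in B^{\phi}$, and since $B^{\phi}$ is factorially closed (\thref{prop}(i)) each $t_i$ and $u$ lie in $B^{\phi}$; as $B^{\phi}$ is algebraically closed in $B$ and $v$ is algebraic over it, $v\in B^{\phi}$, whence $B^{\phi}=B$, a contradiction. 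So I may assume $v\notin B^{\phi}$.

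Next I would homogenize. Grading the Laurent overring $k[t_1^{\pm1},\ldots,t_m^{\pm1},v]$ by $v$-degree (with $\w(t_i)=0$, $\w(v)=1$) induces an admissible proper $\mathbb{Z}$-filtration on $B$ in which $u$ has degree $d$. Arguing exactly as in \thref{gr} (the leading form of the relation is $t_1^{r_1}\cdots t_m^{r_m}u-c\,v^{d}$ with $c\in k^{*}$, and $t_i\nmid v^{d}$, so the candidate quotient is a domain of dimension $m+1$) one obtains
$$
\gr(B)\;\cong\; N:=\frac{k[T_1,\ldots,T_m,U,V]}{(T_1^{r_1}\cdots T_m^{r_m}U-V^{d})}.
$$
By \thref{dhm}, $\phi$ induces a non-trivial homogeneous exponential map $\overline{\phi}$ on $N$, and since $u$ is homogeneous, $\overline{u}\in N^{\overline{\phi}}$. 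Thus it suffices to show that $N$, which is $\mathbb{Z}_{\geqslant0}$-graded with $N_0=k[t_1,\ldots,t_m]$, $\w(v)=1$, $\w(u)=d$, admits no non-trivial homogeneous exponential map fixing $u$.

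Let $a:=\w(U)$ be the weight making $\overline{\phi}$ homogeneous, so the $j$-th part of $\overline{\phi}$ sends $N_s$ into $N_{s-ja}$. If $a>0$, each $t_i\in N_0$ is forced to be invariant (its image would otherwise land in $N_{-ja}=0$). If $a=0$, then $\overline{\phi}$ preserves $N_0=k[t_1,\ldots,t_m]$, restricting to an exponential map $\sigma$ there; since $d>1$ forces $N_1=k[t_1,\ldots,t_m]\,v$, we have $\overline{\phi}(v)=p\,v$ with $p\equiv1\bmod U$, and the relation yields
$$
\prod_{i=1}^{m}\sigma(t_i)^{r_i}=p^{d}\prod_{i=1}^{m}t_i^{r_i}\quad\text{in }k[t_1,\ldots,t_m,U].
$$
As $\sigma$ extends to a $k[U]$-automorphism, the $\sigma(t_i)$ are primes; unique factorisation (with $t_i\nmid p$) then forces $\sigma(t_i)=t_i$ and $p=1$, so again every $t_i$ is invariant. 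In either case $v^{d}=t_1^{r_1}\cdots t_m^{r_m}u\in N^{\overline{\phi}}$, hence $v\in N^{\overline{\phi}}$ by factorial closedness, so $\overline{\phi}$ is trivial — a contradiction.

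The remaining case $a<0$ is the heart of the matter and the main obstacle: it is exactly here that the $r_i=1$ counterexamples live (for $m=1$ the map $v\mapsto v$, $t_1\mapsto t_1+d v^{d-1}U+\cdots$ fixing $u$ has weight $a=1-d$), so this is where $\underline{r}\in\mathbb{Z}^{m}_{>1}$ must be used. I would attack it with the slice description \thref{prop}(ii): localising at the relevant leading coefficient $c\in N^{\overline{\phi}}$ gives $N[c^{-1}]=(N^{\overline{\phi}}[c^{-1}])[y]$ with $u$ of $y$-degree $0$ and $t_1,\ldots,t_m,v$ polynomials in $y$ satisfying $\sum_i r_i\deg_y t_i=d\,\deg_y v$ together with $\prod_i\mathrm{lc}(t_i)^{r_i}\,u=\mathrm{lc}(v)^{d}$. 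For $m=1$ this is precisely \thref{m1}: since $t_1,v$ generate the polynomial ring over the base, one of them has $y$-degree $1$, and then $u$ would have to be a proper $r_1$-th or $d$-th power in $N^{\overline{\phi}}[c^{-1}]$, which is impossible because $r_1,d>1$. For $m\geqslant2$ I would try to descend to the $(m-1)$-variable model $B(r_1,\ldots,\widehat{r_i},\ldots,r_m,V^{d})$ by localising or specialising at an invariant $t_i$ and invoking the inductive hypothesis. The delicate point — and what I expect to be the genuine difficulty — is that in the case $a<0$ no $t_i$ need be invariant, so this reduction is not automatic; closing this case requires a sharper analysis of the slice data that again exploits the impossibility of $u$ being a proper power, now inside the larger invariant ring, which is precisely where the combined hypotheses $r_i>1$ and $d>1$ are essential.
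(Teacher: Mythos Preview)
Your argument has a genuine gap: the case $a<0$, which you yourself flag as ``the heart of the matter,'' is not handled. You correctly observe that this is where the hypothesis $\underline{r}\in\mathbb{Z}^m_{>1}$ must enter, but the slice description of \thref{prop}(ii) together with the proposed reduction to $m-1$ variables does not go through without first securing an invariant $t_l$, and you give no mechanism to produce one. This is not a technicality: the $r_i=1$ counterexamples live precisely in this regime, so closing it requires a real idea that your outline does not supply. Your treatment of $a\geqslant 0$ is essentially fine, but those are the easy cases.

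The paper's proof sidesteps your difficulty by choosing a different grading. Rather than weighting $v$, it uses the filtration induced by $(-1,\ldots,-1)\in\mathbb{Z}^m$, so that $\w(t_i)=-1$, $\w(v)=0$, $\w(u)=\sum r_i$; since $P\in k[V]$ is already homogeneous of weight $0$, the associated graded is isomorphic to $B$ itself, and one may assume from the outset that $\phi$ is homogeneous for this grading with $u\in B^{\phi}$. One then shows $B^\phi\not\subseteq k[u,v]$ (vacuous for $m\geqslant 3$ by transcendence degree, and a two-line factorial-closedness argument for $m=2$), so there is a homogeneous invariant $g\notin k[u,v]$. Writing $g$ in normal form, a weight count singles out an index $l$: either some monomial of the polynomial part of $g$ is divisible by $t_l$, or (when that part lies in $k[v]$, forcing $\w(g)=0$) some monomial $t_1^{i_1}\cdots t_m^{i_m}u^jv^q$ with $i_s<r_s$ satisfies $\sum i_n=j\sum r_n$, whence $i_l>jr_l$ for some $l\neq s$. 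Now apply a \emph{second} filtration, with weight $1$ on $t_l$ alone. In the resulting associated graded (again $\cong B$) the image of $g$ is divisible by $\widehat{t_l}$, so $\widehat{t_l}$ is invariant for the induced exponential map; localising at $t_l$ then reduces to the $(m-1)$-variable ring over $k(T_l)$, and the induction on $m$ (base case \thref{m1}) closes. This two-step filtration trick that manufactures an invariant $t_l$ is the ingredient your approach is missing.
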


\begin{proof}
	Let  
	$$
	B:=B(\underline{r},P) = \frac{k[T_1,\ldots,T_m,U,V]}{(T_1^{r_1}\cdots T_m^{r_m}U-P(V))}.
	$$
	We will prove the result by induction on $m$. For $m=1$, the result holds by \thref{m1}. Therefore we assume that $m\geqslant 2$. Suppose the assertion holds upto $m-1$. 
	
	If possible, suppose there exists a non-trivial exponential map $\phi$ on $B$ such that $u \in B^{\phi}$. By \thref{dhm}, with respect to the filtration induced by $(-1,\ldots,-1) \in \mathbb{Z}^m$ on $B$, we get a non-trivial exponential map $\overline{\phi}$ on the associated graded ring $\overline{B}$, which is isomorphic to $B$ itself (cf. \thref{gr}), and $\overline{u} \in \overline{B}^{\overline{\phi}}$, where $\overline{u}$ denotes the image of $u$ in $\overline{B}$. Therefore, we can assume that $B$ is a graded ring and $\phi$ is a homogeneous exponential map on $B$, such that $u \in B^{\phi}$ and the weights of the generators of $B$ are as follows: 
	$$
	wt(t_i)=-1, \text{~for every~} i, 1 \leqslant i \leqslant m,\, wt(u)=r_1+\cdots+r_m,\, wt(v)=0.
	$$
    We first show that $B^{\phi} \nsubseteq k[u,v]$. Suppose, if possible, $B^{\phi} \subseteq k[u,v] (\subseteq B)$. This can happen only when $m=2$ and hence $\td_{k} B^{\phi}=2$.
    Thus $B^{\phi}=k[u,v]$ (cf. \thref{prop}(i)). But then, it follows that $t_1,t_2 \in B^{\phi}$, as $t_1^{r_1} t_2^{r_2}u=P(v) \in B^{\phi}$ (\thref{prop}(i)). This contradicts the fact that $\phi$ is non-trivial. Hence $B^{\phi} \nsubseteq k[u,v]$.
    
    Therefore, there exists $g \in B^{\phi}\setminus k[u,v]$, which is homogeneous with respect to the grading on $B$ and 
	$$
	g= \widetilde{g}(t_1,\ldots,t_m,v)+ \sum_{(\underline{i},j,q) \in \Lambda} \alpha_{\underline{i}\, jq}\, t_{1}^{i_{1}} \cdots t_{m}^{i_{m}} u^j v^q ,
	$$
	where $\widetilde{g} \in k^{[m+1]}$ and $\alpha_{\underline{i}\, jq} \in k$. Now the following two cases can occur. We choose a suitable index $l, 1 \leqslant l \leqslant m$ as follows:
	
	\smallskip
	\noindent
	{\it Case  }1:
	If $\widetilde{g} \notin k[v]$, then it has a monomial summand $g_2$ such that $t_l \mid g_2$, for some $l \in \{1,\ldots,m\}$.
	
	\medskip
	\noindent
	{\it Case }2:
	If $\widetilde{g} \in k[v]$, then there exist at least one nonzero summand of $g$ of the form $\alpha_{\underline{i}\, jq}\, t_{1}^{i_{1}} \cdots t_{m}^{i_{m}} u^j v^q$, and each such summand has weight zero. We fix such a summand. Since its weight is zero, we have $j(r_{1}+\cdots+r_{m})=i_{1}+\cdots+i_{m}$. 
	Also there exists some $s \in \{1,\ldots, m\}$ such that $i_s<r_s$.
	Hence there exists some $l \in \{1,\ldots,m\}, l \neq s$ such that $i_{l}> jr_l$.  
	
	Now if we consider the filtration on $B$, induced by $(0,\ldots,0,1,0,\ldots,0) \in \mathbb{Z}^m$, where the $l$-th entry is 1, then $\phi$ will induce a non-trivial exponential map $\widehat{\phi}$ on the associated graded ring (cf. \thref{gr})
	$$
	\widehat{B} \cong B = \frac{k[T_1,\ldots,T_m,U,V]}{(T_1^{r_1}\cdots T_m^{r_m}U-P(V))}.
	$$
	 For every $b \in B$, let $\widehat{b}$ denote its image in $\widehat{B}$. Note that $\widehat{u}, \widehat{g} \in \widehat{B}^{\widehat{\phi}}$. Further, one can see from {\it Cases} 1 and 2 that $\widehat{t_l} \mid \widehat{g}$. Hence  
	 $\widehat{t_l} \in \widehat{B}^{\widehat{\phi}}$ (cf. \thref{prop}(i), \thref{dhm}). Therefore, by \thref{prop}(iii), $\widehat{\phi}$ will induce a non-trivial exponential map $\widetilde{\phi}$ on 
	$$
	\widetilde{B}= \widehat{B} \otimes_{k[\widehat{t_l}]}k(\widehat{t_l}) \cong \frac{k(T_l)[T_1,\ldots,T_{l-1},T_{l+1},\ldots,T_m,U,V]}{(T_1^{r_1}\cdots T_{l-1}^{r_{l-1}} T_{l+1}^{r_{l+1}} \cdots T_m^{r_m}U-P(V))}.
	$$
	Since $\widehat{u} \in \widehat{B}^{\widehat{\phi}}$, we have $\widetilde{u} \in \widetilde{B}^{\widetilde{\phi}}$, where $\widetilde{u}$ is the image of $\widehat{u}$ in $\widetilde{B}$. But this contradicts the induction hypothesis.  Hence the result follows.
\end{proof}

The next result describes the Makar-Limanov invariant of the ring $B_{\underline{r}}$.

\begin{thm}\thlabel{mld}
  Let $B_{\underline{r}}$ be the ring as in \eqref{b}.	Then the  following hold:
  \begin{enumerate}
  	\item [\rm(a)] If $\underline{r}\in \mathbb{Z}^m_{>1}$, then $\ml(B_{\underline{r}})=k[t_1,\ldots,t_m]$. 
  	
  	\item [\rm (b)] If $\underline{r}\in \mathbb{Z}^m_{\geqslant 1} \setminus \mathbb{Z}^m_{>1}$, then $\ml(B_{\underline{r}}) \subsetneq k[t_1,\ldots,t_m]$, and for $\underline{1}=(1,\ldots,1)$, $\ml(B_{\underline{1}})=k$.
  \end{enumerate}
\end{thm}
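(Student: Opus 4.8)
The plan is to establish the two inclusions of (a) separately and then to reduce (b) to the construction of extra exponential maps; the recurring tool is the identification $gr(B_{\underline{r}})\cong B(\underline{r},P)$ from \thref{gr}, on which \thref{m1} and \thref{m2} apply.

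First I would prove the upper bound $\ml(B_{\underline{r}})\subseteq k[t_1,\ldots,t_m]$, which holds for every $\underline{r}\in\mathbb{Z}^m_{\geqslant 1}$. Put $\pi:=t_1^{r_1}\cdots t_m^{r_m}$ and expand $F(t_1,\ldots,t_m,v+\pi W)=\pi u+\sum_{j\geqslant 1}\pi^{j}W^{j}G_{j}$ with $G_j\in k[t_1,\ldots,t_m,v]$. Then $\phi_0(t_i)=t_i$, $\phi_0(v)=v+\pi W$, $\phi_0(u)=u+\sum_{j\geqslant 1}\pi^{j-1}W^{j}G_{j}$ defines a map $B_{\underline{r}}\to B_{\underline{r}}[W]$; one checks directly that $\phi_0(u)\in B_{\underline{r}}[W]$ and that $\phi_0$ satisfies the axioms of an exponential map, so $\phi_0$ is a non-trivial exponential map. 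Since $\pi\in B_{\underline{r}}^{\phi_0}$, \thref{prop}(iii) shows that $\phi_0$ induces on $B_{\underline{r}}[\pi^{-1}]=k[t_1^{\pm 1},\ldots,t_m^{\pm 1},v]$ the $v$-translation, whose invariants are $k[t_1^{\pm 1},\ldots,t_m^{\pm 1}]$; hence $B_{\underline{r}}^{\phi_0}[\pi^{-1}]=k[t_1^{\pm 1},\ldots,t_m^{\pm 1}]$. A short computation with the normal form \eqref{b1} gives $B_{\underline{r}}\cap k[t_1^{\pm 1},\ldots,t_m^{\pm 1}]=k[t_1,\ldots,t_m]$, so $B_{\underline{r}}^{\phi_0}=k[t_1,\ldots,t_m]$ and in particular $\ml(B_{\underline{r}})\subseteq k[t_1,\ldots,t_m]$.

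For the reverse inclusion in (a) I must show $t_i\in B_{\underline{r}}^{\phi}$ for every non-trivial exponential map $\phi$ when $\underline{r}\in\mathbb{Z}^m_{>1}$. Filtering $B_{\underline{r}}$ by $(-1,\ldots,-1)$, the top form of $F$ is $P(V)$, which no $t_i$ divides, so \thref{gr} yields $gr(B_{\underline{r}})\cong B(\underline{r},P)$ and \thref{dhm} produces a non-trivial homogeneous $\overline{\phi}$ with $\rho(B_{\underline{r}}^{\phi})\subseteq gr(B_{\underline{r}})^{\overline{\phi}}$. By \thref{m2}, $\overline{u}\notin gr(B_{\underline{r}})^{\overline{\phi}}$, hence $u\notin B_{\underline{r}}^{\phi}$. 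I would next compute $\ml(B(\underline{r},P))=k[t_1,\ldots,t_m]$ in the graded case by induction on $m$ patterned on the proof of \thref{m2}: a further one-variable filtration exhibits an invariant $\widehat{t_l}$, and localizing at it reduces $m$ while preserving the shape $\prod T_i^{r_i}U-P(V)$ of the relation, with \thref{m1} anchoring $m=1$; combined with factorial closure (\thref{prop}(i)) applied to $t_1^{r_1}\cdots t_m^{r_m}u=P(v)$ this forces every $\overline{t_i}\in gr(B_{\underline{r}})^{\overline{\phi}}$. The main obstacle is the transfer back to $B_{\underline{r}}$: homogenization only delivers $\rho(B_{\underline{r}}^{\phi})\subseteq k[\overline{t_1},\ldots,\overline{t_m}]$, and one must still deduce that a $\phi$-invariant whose leading form is a polynomial in the $\overline{t_i}$ already lies in $k[t_1,\ldots,t_m]$. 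This is exactly where $r_i>1$ is indispensable (the fibre $t_i=0$ is non-reduced of multiplicity $r_i$); I expect the cleanest route is to show $B_{\underline{r}}^{\phi}$ is a factorially closed subring of $k[t_1,\ldots,t_m]$ of transcendence degree $m$, which forces $B_{\underline{r}}^{\phi}=k[t_1,\ldots,t_m]$.

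Finally, for (b) the bound $\ml(B_{\underline{r}})\subseteq k[t_1,\ldots,t_m]$ is already available, so it suffices to produce, for each index $i$ with $r_i=1$, a non-trivial exponential map moving $t_i$. Taking $r_1=1$ and writing $F=P_1(t_2,\ldots,t_m,v)+t_1\widetilde{F}$, one has $t_1\bigl(t_2^{r_2}\cdots t_m^{r_m}u-\widetilde{F}\bigr)=P_1$; when the corresponding partial fibre is a line I would set $v\mapsto v+\mu W$ for a suitable invariant $\mu$ and solve $\psi(t_1)=P_1(\ldots,v+\mu W)/\mu\in B_{\underline{r}}[W]$, obtaining $t_1\notin B_{\underline{r}}^{\psi}$ and hence $\ml(B_{\underline{r}})\subsetneq k[t_1,\ldots,t_m]$. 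For $\underline{1}=(1,\ldots,1)$ one gets such a $\psi_i$ for every $i$, and since each $B_{\underline{r}}^{\psi_i}$ is factorially closed and omits $t_i$, the intersection of the $B_{\underline{r}}^{\psi_i}$ with $k[t_1,\ldots,t_m]$ collapses to $k$, giving $\ml(B_{\underline{1}})=k$. The delicate point, dual to the obstacle in (a), is to verify that these maps are defined on all of $B_{\underline{r}}$ and not merely on a localization at the $t_j$ ($j\neq i$); this global solvability is what fails under the multiplicity hypothesis $r_i>1$ and what must be checked when $r_i=1$.
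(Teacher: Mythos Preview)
Your outline for (a) has a genuine gap. The induction you propose for $\ml(B(\underline{r},P))=k[t_1,\ldots,t_m]$ is not well-founded: \thref{m1} only forbids $u$ from being invariant and does \emph{not} yield $\ml(B(r_1,P))=k[t_1]$, so it cannot anchor $m=1$; and the step ``a one-variable filtration exhibits an invariant $\widehat{t_l}$'' is lifted from the proof of \thref{m2}, where it works only under the extra hypothesis $u\in B^{\phi}$. More fundamentally, nothing in your scheme excludes an invariant $g\in k[t_1,\ldots,t_m,v]\setminus k[t_1,\ldots,t_m]$: the weight $(-1,\ldots,-1)$ filtration is blind to $v$ since $\operatorname{wt}(v)=0$. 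The paper avoids both your induction and the transfer obstacle by arguing directly on $B_{\underline{r}}$ with two kinds of filtration. If a hypothetical $g\in B_{\underline{r}}^{\psi}$ lies outside $R=k[t_1,\ldots,t_m,v]$, a single-index filtration $(0,\ldots,-1,\ldots,0)$ followed by $(-1,\ldots,-1)$ forces the image of $u$ to be invariant in the associated graded, contradicting \thref{m2}. If $g\in R\setminus k[t_1,\ldots,t_m]$, a \emph{$v$-degree} filtration produces an associated graded with relation $T_1^{r_1}\cdots T_m^{r_m}U=V^d$; there $\widetilde v\mid\widetilde g$, so $\widetilde v$ is invariant, and factorial closure on $\widetilde t_1^{\,r_1}\cdots\widetilde t_m^{\,r_m}\widetilde u=\widetilde v^{\,d}$ makes every generator invariant---contradiction. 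This second filtration is the missing idea; once you have it, the transfer problem you flag never arises.

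For (b) your construction is conditioned on ``the corresponding partial fibre is a line,'' but $\deg_V F(t_1,\ldots,0,\ldots,t_m,V)=d>1$, so this never holds, and you have not shown $\psi(t_1)\in B_{\underline{r}}[W]$ globally. The paper passes instead to the localization $E_j=B_{\underline{r}}[t_l^{-1}:l\neq j]$, where after setting $u_j:=u-(\text{correction})\in E_j$ the relation reads $t_ju_j=F(\ldots,0,\ldots,v)/\prod_{l\neq j}t_l^{r_l}$; the map $v\mapsto v+u_jW$, $u_j\mapsto u_j$ then determines $\phi_j(t_j)$ on $E_j$, and one restricts to $B_{\underline{r}}$ by the substitution $W\mapsto(\prod_{l\neq j}t_l)^nW$ for $n\gg0$. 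This yields $B_{\underline{r}}^{\widetilde{\phi_j}}=k[t_1,\ldots,t_{j-1},\widetilde u_j,t_{j+1},\ldots,t_m]$ with $t_j\notin B_{\underline{r}}^{\widetilde{\phi_j}}$; intersecting with your $B_{\underline{r}}^{\phi_0}=k[t_1,\ldots,t_m]$ over all $j$ with $r_j=1$ gives the strict inclusion, and for $\underline{r}=\underline{1}$ the intersection collapses to $k$.
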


\begin{proof}
	(a) We first show that for any non-trivial exponential map $\phi$ on $B_{\underline{r}}$, $B_{\underline{r}}^{\phi} \subseteq k[t_1,\ldots,t_m]$. 
Suppose, if possible, there exists a non-trivial exponential map $\psi$ 
on	$B_{\underline{r}}$ such that $B_{\underline{r}}^{\psi} \nsubseteq k[t_1,\ldots,t_m]$. Therefore, there exists $g \in B_{\underline{r}}^{\psi} \setminus k[t_1,\ldots,t_m]$.
Further, suppose that $g \notin k[t_1,\ldots,t_m,v]$. Then, $g$ can be uniquely expressed as
    $$
	g= g_1(t_1,\ldots,t_m,v)+ \sum_{(\underline{i},j,q) \in \Lambda} \alpha_{\underline{i}\, jq}\, t_{1}^{i_{1}} \cdots t_{m}^{i_{m}} u^j v^q ,
	$$
	where $ g_1 \in k^{[m+1]}$ and $\alpha_{\underline{i}\, jq} \in k^{*}$.

	Let us choose a summand $\alpha_{\underline{i}\, jq}t_1^{i_1}\cdots t_m^{i_m}u^jv^q$ of $g$, where $i_s<r_s$ for some $s, 1\leqslant s \leqslant m$. We now consider the proper $\mathbb{Z}$-filtration on $B_{\underline{r}}$, induced by $(0,\ldots,0,-1,0,\ldots,0) \in \mathbb{Z}^m$, where the $s$-th entry is $-1$. Then $\psi$ induces a non-trivial exponential map $\overline{\psi}$ on the associated graded ring $\overline{B}_{\underline{r}}$. By \thref{gr},
	$$
	\overline{B}_{\underline{r}} \cong \frac{k[T_1,\ldots,T_m,U,V]}{(T_1^{r_1}\cdots T_m^{r_m}U-F(T_1,\ldots,T_{s-1},0,T_{s+1},\ldots,T_m,V))}.
	$$
	 For every $b\in B_{\underline{r}}$, let $\overline{b}$ denote the image of $b$ in $\overline{B}_{\underline{r}}$. With respect to the chosen filtration it is clear that $\overline{u} \mid \overline{g}$ and since $\overline{g} \in \overline{B}^{\overline{\psi}}_{\underline{r}}$ (\thref{dhm}), $\overline{u} \in \overline{B}^{\overline{\psi}}_{\underline{r}}$.
	 
	  Further with respect to the $\mathbb{Z}$-filtration induced by $(-1,\ldots,-1) \in \mathbb{Z}^m_{>1}$ on $\overline{B_{\underline{r}}}$, $\overline{\psi}$ induces a non-trivial exponential map $\psi^{\prime}$ on the associated graded ring $B_{\underline{r}}^{\prime}$. By \thref{gr},
	 $$
	 B_{\underline{r}}^{\prime} \cong \frac{k[T_1,\ldots,T_m,U,V]}{(T_1^{r_1}\cdots T_m^{r_m}U-P(V))}.
	 $$ 
	  Let $u^{\prime}$ denote the image of $U$ in $B_{\underline{r}}^{\prime}$.  Since $\overline{u} \in \overline{B}^{\overline{\psi}}_{\underline{r}}$, $u^{\prime} \in (B_{\underline{r}}^{\prime})^{\psi^{\prime}}$.
	 But this contradicts \thref{m2}. Therefore, we have $g \in k[t_1,\ldots,t_m,v]=R$.
	
	\medskip
	Now note that $B_{\underline{r}} \hookrightarrow C:=k[t_1^{\pm 1},\ldots,t_m^{\pm 1},v]$. Set $D_n:= k[t_1^{\pm 1}, \ldots, t_m^{\pm 1}]v^n$ for all $n \ge 0$.
	The ring $k[t_1^{\pm 1},\ldots,t_m^{\pm 1},v]$ can be given the following $\mathbb{Z}$-graded structure:
	$$
	k[t_{1}^{\pm 1},\ldots,t_{m}^{\pm 1},v] = \bigoplus_{n \geqslant 0} D_{n}=\bigoplus_{n \geqslant 0} k[t_1^{\pm 1}, \ldots, t_m^{\pm 1}]v^n.
	$$
	This induces a proper $\mathbb{Z}$-filtration $\{(B_{\underline{r}})_n\}_{n \in \mathbb{Z}}$ on $B_{\underline{r}}$ such that $(B_{\underline{r}})_{n} = \left(\bigoplus_{i \leqslant n} D_{n}\right) \cap B_{\underline{r}}$. 
	 Set 
	\begin{equation}\label{Lambda}
		\Lambda_1:=\biggl\{ (\underline{i},j):=(i_1,\ldots,i_m,j) \in \mathbb{Z}^m_{\geqslant0} \times \mathbb{Z}_{>0} \mid i_s <r_s \text{~for some~} s, 1\leqslant s \leqslant m \biggr\}.
	\end{equation}
	 Using the relation $t_1^{r_1}\cdots t_m^{r_m}u=F(t_1,\ldots,t_m,v)$, one can see that every element $b \in B_{\underline{r}}$ can be uniquely expressed as
	\begin{equation}\label{b2}
		b= \sum_{n \geqslant 0}^{}b_n (t_1,\ldots,t_m) v^n+ \sum_{(\underline{i},j)\in \Lambda_1} b_{\underline{i}\,j}(v)\, t_{1}^{i_{1}} \cdots t_{m}^{i_{m}} u^j  ,
	\end{equation}
	such that $b_{\underline{i}j}(v) \in k[v] \setminus \{0\}$.
	
	Now since the filtration $\{(B_{\underline{r}})_n\}_{n \in \mathbb{Z}}$ on $B_{\underline{r}}$ is induced from the graded structure of the ring $C$, from the expression \eqref{b2}, it follows that the filtration $\{(B_{\underline{r}})_n\}_{n \in \mathbb{Z}}$ is admissible with respect to the generating set $\Gamma=\{t_1,\ldots,t_m,u,v\}$ of $B_{\underline{r}}$ and the associated graded ring 
	$$
	E:=\bigoplus_{n \in \mathbb{Z}} \frac{B_n}{B_{n-1}}
	$$ 
	is generated by the image of $\Gamma$ in $E$. 
	For any $b \in B_{\underline{r}}$, let $\widetilde{b}$ denote its image in $E$. Note that $\widetilde{t_1}^{r_1} \cdots \widetilde{t_m}^{r_m} \widetilde{u}= \widetilde{v}^d$ in $E$, where $d=\deg_V F=\deg_V P(V) (>1)$. 

	As $E$ can be identified with a subring of the graded domain $gr(C) \cong k[t_1^{\pm 1},\ldots,t_m^{\pm 1}, v]$, we have $\widetilde{t_1},\ldots,\widetilde{t_m},\widetilde{v}$ are algebraically independent in $E$ and hence $\dim E=m+1$.
	Since $\frac{k[T_1,\ldots,T_m,U,V]}{(T_1^{r_1}\cdots T_m^{r_m}U- V^d)} $ is an integral domain of dimension $m+1$, we have the following isomorphism
	\begin{equation}\label{e}
		E \cong \frac{k[T_1,\ldots,T_m,U,V]}{(T_1^{r_1}\cdots T_m^{r_m}U- V^d)}.
	\end{equation}
	Now by \thref{dhm}, we have $\psi$ induces a non-trivial exponential map $\widetilde{\psi}$ on $E$ such that $\widetilde{g} \in E^{\widetilde{\psi}}$. Now from the grading on $E$ it is clear that $\widetilde{v} \mid \widetilde{g}$ and hence $\widetilde{v} \in E^{\widetilde{\psi}}$. But then from \eqref{e} it follows that $\widetilde{t_1},\ldots,\widetilde{t_m},\widetilde{u}\in E^{\widetilde{\psi}}$ (cf. \thref{prop}(i)) and hence $\widetilde{\psi}$ is a trivial exponential map, which is a contradiction.
	
	Therefore we obtain that for every non-trivial exponential map $\phi$ on $B_{\underline{r}}$, $B_{\underline{r}}^{\phi} \subseteq k[t_1,\ldots,t_m]$. Since $B_{\underline{r}}^{\phi}$ is algebraically closed in $B_{\underline{r}}$ and $\td_{k} B^{\phi}_{\underline{r}} =m$ (cf. \thref{prop}(ii)), we have $B_{\underline{r}}^{\phi}=k[t_1,\ldots,t_m]$. Therefore $\ml(B_{\underline{r}})=k[t_1,\ldots,t_m]$. 
	
	\medskip
	\noindent
	(b) Let $\underline{r}=(r_1,\ldots,r_m) \in \mathbb{Z}^m_{\geqslant 1}\setminus \mathbb{Z}^m_{>1} $. Suppose $r_j =1$ for some $j, 1 \leqslant j \leqslant m$. Note that 
	$$
		B_{\underline{r}} = \frac{k[T_1,\ldots,T_m,U,V]}{(T_1^{r_1}\cdots T_{j-1}^{r_{j-1}} T_{j+1}^{r_{j+1}}\cdots T_m^{r_m} T_j U-F(T_1,\ldots,T_m,V))}.
	$$
	Let $E_j:=B_{\underline{r}}[t_1^{-1},\ldots,t_{j-1}^{-1},t_{j+1}^{-1},\ldots, t_{m}^{-1}]$ and $C_j:=k[t_1^{\pm 1},\ldots,t_{j-1}^{\pm 1},t_{j+1}^{\pm 1},\ldots,t_m^{\pm 1}]$. Suppose that
	$$
	F=T_j F_j(T_1,\ldots,T_m,V)+ F(T_1,\ldots,T_{j-1},0,T_{j+1},\ldots,T_m,V).
	$$
	Now for 
	\begin{equation}\label{uj}
	u_j:=u-\frac{F_j(t_1,\ldots,t_m,v)}{t_1^{r_1}\cdots t_{j-1}^{r_{j-1}}t_{j+1}^{r_{j+1}}\cdots t_m^{r_m}} \in E_j,
    \end{equation}
	we have 
	$$
		t_j u_j -\frac{F(t_1,\ldots,t_{j-1},0,t_{j+1},\ldots,t_m,v)}{t_1^{r_1}\cdots t_{j-1}^{r_{j-1}}t_{j+1}^{r_{j+1}}\cdots t_m^{r_m}}=0
	$$
	and $E_j=C_j[t_j, u_j, v]$. We consider the exponential map $\phi_{j}: E_j \rightarrow E_j[W]$ such that
	$$
	\phi_j|_{C_j}= id_{C_j},~~\phi_j(u_j)=u_j, ~~ \phi_j(v)=v+u_j W \text{~and~}
	$$ 
	$$
	\phi_j(t_j)=\frac{F(t_1,\ldots,t_{j-1},0,t_{j+1},\ldots,t_m,v+u_jW)}{t_1^{r_1}\cdots t_{j-1}^{r_{j-1}}t_{j+1}^{r_{j+1}}\cdots t_m^{r_m}u_j}=t_j+\sum_{i=1}^{r}\alpha_i W^i,
	$$
	where $\alpha_i \in E_j$, for each $i, 1 \leqslant i \leqslant r$. Now since $\phi_{j}(u_j)=u_j$, using \eqref{uj}, it follows that 
	$$
	\phi_j(u)=u+\frac{F_j(t_1,\ldots,t_j+\sum_{i=1}^{r}\alpha_i 
	W^i,\ldots,t_m,v+u_jW)-F_j(t_1,\ldots,t_m,v)}{t_1^{r_1}\cdots 
	t_{j-1}^{r_{j-1}}t_{j+1}^{r_{j+1}}\cdots t_m^{r_m}}= u+\sum_{l=1}^{s}\beta_l W^l,
	$$
	where $\beta_l \in E_j$ for every $l, 1 \leqslant l \leqslant s$.
	 
	 Let $p_j:= t_1\cdots t_{j-1}t_{j+1}\cdots t_m$ and $n$ be the smallest positive integer such that $p_j^n u_j, p_j^n \alpha_i , p_j^n \beta_l \in B_{\underline{r}}$ for every $i, 1 \leqslant i \leqslant r$ and every $l, 1 \leqslant l \leqslant s$. Since $\phi_{j}(p_j)=p_j$, $\phi_j$ induces an exponential map $\widetilde{\phi_j}: B_{\underline{r}} \rightarrow B_{\underline{r}}[W]$ such that 
	 \begin{align*}
	 	&\widetilde{\phi_j}(t_i)=t_i, \text{~for every~} i, 1\leqslant i \leqslant m \text{~and~} i \neq j,\\
	 	&\widetilde{\phi_j}(t_j)=t_j+ \sum_{i=1}^{r} \alpha_i (p_j^nW)^i,\\
	 	& \widetilde{\phi_j}(v)=v+u_jp_j^n W \text{~and~}\\ 
	 	& \widetilde{\phi_j}(u)=u+ \sum_{l=1}^{s} \beta_l (p_j^nW)^l.
	 \end{align*}
	Since $t_1,\ldots,t_{j-1},t_{j+1},\ldots,t_m,u_j \in E_j^{\phi_{j}}$, it is clear that $\widetilde{u_j} \in E_j^{\phi_j}$, where
   	$$
   	\widetilde{u_j}:=t_1^{r_1}\cdots 
   	t_{j-1}^{r_{j-1}}t_{j+1}^{r_{j+1}}\cdots t_m^{r_m}u_j= t_1^{r_1}\cdots 
   	t_{j-1}^{r_{j-1}}t_{j+1}^{r_{j+1}}\cdots t_m^{r_m}u -F_j(t_1,\ldots,t_m,v).
   	$$ 
	Therefore, from the definition of $\widetilde{\phi_j}$, it follows that $\widetilde{u_j} \in B_{\underline{r}}^{\widetilde{\phi_j}}$, and hence
	$$
	D_j:=k[t_1,\ldots,t_{j-1},\widetilde{u_j},t_{j+1},\ldots,t_m] \subseteq B_{\underline{r}}^{\widetilde{\phi_j}}.
	$$
	Further, since $\td_{k}D_j=\td_k B_{\underline{r}}^{\widetilde{\phi_j}}$ and $D_j$ is algebraically closed in $B_{\underline{r}}^{\widetilde{\phi_j}}$, we have $B_{\underline{r}}^{\widetilde{\phi_j}}=D_j$.
	 
\medskip
	Again consider the following map $\phi: B_{\underline{r}} \rightarrow B_{\underline{r}}[W]$ such that 
	$$
	\phi(t_i)=t_i \text{~for every~} i,1 \leqslant i \leqslant m,~~ \phi(v)=v+ t_1^{r_1}\cdots t_m^{r_m} W
	$$
	and 
	$$
	\phi(u)= \frac{F(t_1,\ldots,t_m,v+ t_1^{r_1}\cdots t_m^{r_m} W)}{t_1^{r_1}\cdots t_m^{r_m} }= u+ W \alpha(t_1, \ldots, t_m, v, W),
	$$
	where $\alpha \in k^{[m+2]}$. It is easy to see that $\phi \in \text{EXP}(B_{\underline{r}})$ and $k[t_1,\ldots,t_m] \subseteq B_{\underline{r}}^{\phi}$. As $k[t_1,\ldots,t_m]$ is algebraically closed in $B_{\underline{r}}$, we have $B_{\underline{r}}^{\phi}=k[t_1,\ldots,t_m]$.   
	
	 Therefore, we obtain that $ \ml(B_{\underline{r}}) \subseteq B_{\underline{r}}^{\phi} \bigcap_{\{j \,\mid\, r_j=1\}} B_{\underline{r}}^{\widetilde{\phi_{j}}} \subsetneq k[t_1,\ldots,t_m]$. In particular, for $\underline{r}=\underline{1}$ we have $k \subseteq \ml(B_{\underline{1}}) \subseteq B_{\underline{1}}^{\phi} \bigcap_{1 \leqslant j \leqslant m} B_{\underline{1}}^{\widetilde{\phi_{j}}}=k$. Hence the result follows.
\end{proof}

\begin{rem}\thlabel{a}
	\em{From \thref{mld}, it is clear that when $\underline{r} \in \mathbb{Z}^m_{>1}$ and $\underline{s} \in \mathbb{Z}^m_{\geqslant 1} \setminus \mathbb{Z}^m_{>1}$, then $B_{\underline{r}} \ncong B_{\underline{s}}$, as $\ml(B_{\underline{r}}) \neq \ml(B_{\underline{s}})$.}
\end{rem}

The next theorem classifies the generalised Danielewski varieties $B_{\underline{r}}$ upto isomorphism when $\underline{r} \in \mathbb{Z}^m_{>1}$.

\begin{thm}\thlabel{isod}
	Let $(r_1,\ldots,r_m),(s_1,\ldots,s_m) \in \mathbb{Z}^m_{>1}$ and $F,G \in k[T_1,\ldots,T_m,V]$ be monic polynomials in $V$ each of degree more than $1$, such that $P(V)=F(0,\ldots,0,V)$ and $Q(V)=G(0,\ldots,0,V)$.
	Suppose 
	$$
	B:=B(r_1,\ldots,r_m,F)=\frac{k[T_1,\ldots,T_m,U,V]}{(T_1^{r_1}\cdots T_m^{r_m}U-F(T_1,\ldots,T_m,V))}
	$$ 
	and 
	$$
	B^{\prime}:=B(s_1,\ldots,s_m,G)=\frac{k[T_1,\ldots,T_m,U,V]}{(T_1^{s_1}\cdots T_m^{s_m}U-G(T_1,\ldots,T_m,V))}
	$$ 
   If $B(r_1,\ldots,r_m,F) \cong B(s_1,\ldots,s_m,G)$ then 
	
	\begin{enumerate}
		\item [\rm(i)] $(r_1,\ldots,r_m)=(s_1,\ldots,s_m)$ upto a permutation of $\{1,\ldots,m\}$.
		
		\item[\rm(ii)] There exists $\alpha \in Aut_k(k[V])$ such that $\alpha(Q) = \lambda P$, for some $\lambda \in k^{*}$. Thus $\deg_V P= \deg_V Q$.
	\end{enumerate}

	Furthermore, if $F,G \in k[V]$, then the conditions (i) and (ii) are sufficient for  $B(r_1,\ldots,r_m,F)$ to be isomorphic to $B(s_1,\ldots,s_m,G)$.   
\end{thm}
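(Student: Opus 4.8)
The plan is to recover the data $(\underline r, P)$ from the ring $B$ through intrinsic, isomorphism-stable invariants. Write $t_i,u,v$ and $t_i',u',v'$ for the images of $T_i,U,V$ in $B$ and $B'$. Since $\underline r,\underline s\in\mathbb Z^m_{>1}$ and $\deg_V P=\deg_V Q=d>1$, \thref{mld}(a) gives $\ml(B)=k[t_1,\ldots,t_m]$ and $\ml(B')=k[t_1',\ldots,t_m']$. Any isomorphism $\Phi\colon B\to B'$ carries $\ml(B)$ onto $\ml(B')$ and is compatible with base change, so for each prime $\mathfrak p$ of $k[t_1,\ldots,t_m]$ the fibre $B\otimes_{k[t]}k(\mathfrak p)$ is isomorphic to $B'\otimes k(\Phi\mathfrak p)$. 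A direct computation, using $B/t_iB\cong\big(k[t_1,\ldots,\widehat{t_i},\ldots,t_m,v]/(F|_{t_i=0})\big)[U]$ and the monicity of $F$, shows that the height-one primes $\mathfrak p$ for which this fibre is \emph{not} isomorphic to $k(\mathfrak p)^{[1]}$ are precisely $(t_1),\ldots,(t_m)$ (over the remaining height-one primes the fibre is $k(\mathfrak p)[v]$), and likewise for $B'$. Since the units of $k[t_1,\ldots,t_m]$ are $k^*$, $\Phi$ must therefore permute these primes: there exist $\sigma\in S_m$ and $c_i\in k^*$ with $\Phi(t_i)=c_i t'_{\sigma(i)}$.

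Next I would control $\Phi(v)$. Localising at $\prod t_i$ gives $B[\prod t_i^{-1}]=k[t_1^{\pm1},\ldots,t_m^{\pm1},v]$ and similarly for $B'$, so $\Phi$ extends to an isomorphism of these one-variable polynomial rings over the isomorphic Laurent rings; hence $\Phi(v)=\alpha v'+\beta$ with $\alpha$ a unit of $k[t'^{\pm1}]$ and $\beta\in k[t'^{\pm1}]$. Comparing $v'$-degrees in the normal form \eqref{b1} shows that every element of $B'$ of $v'$-degree $\le 1$ already lies in $k[t_1',\ldots,t_m',v']$; this forces $\alpha,\beta\in k[t']$, and running the same argument for $\Phi^{-1}$ and composing forces $\alpha\in k^*$. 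Consequently $\Phi$ restricts to an isomorphism $k[t,v]\to k[t',v']$ and $\Phi\big(\prod t_i^{r_i}\big)=c'\prod_\ell t_\ell'^{\,r_{\sigma^{-1}(\ell)}}$ for some $c'\in k^*$.

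The heart of (i) is to recover $r_i$ itself. Because $\Phi(t_i^nB)=t'^{\,n}_{\sigma(i)}B'$, the $t_i$-adic order $\nu_i(b):=\max\{n:b\in t_i^nB\}$ satisfies $\nu_i(b)=\nu'_{\sigma(i)}(\Phi b)$. I claim $\nu_i(F)=r_i$ for the element $F=\prod t_j^{r_j}u\in B$: clearly $F\in t_i^{r_i}B$, while $F\in t_i^{r_i+1}B$ would force $\big(\prod_{j\ne i}t_j^{r_j}\big)u\in t_iB$, which is impossible since this element is a nonzerodivisor times $U$ in $B/t_iB$. The key lemma, which I expect to be the main obstacle, is that for any $H\in k[t',v']$ with $t_\ell'\nmid H$ and $\deg_{v'}H=d$ with unit leading coefficient one has $\nu'_\ell(H)\le s_\ell$. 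I would prove this by inverting all $t'_{\ell'}$ with $\ell'\ne\ell$, reducing to the one-variable ring $k'[t_\ell',v',u']/(t_\ell'^{\,s_\ell}u'-G_1)$ over $k'=k(t'_{\ell'}:\ell'\ne\ell)$, where division of $H$ by the monic $G_1$ yields $\nu'_\ell(H)=\min\!\big(s_\ell,\operatorname{ord}_{t_\ell'}(H-cG_1)\big)\le s_\ell$; the delicate point being controlled is the cancellation among powers of $u'$ (equivalently, that $v'$-degree $<d$ elements of $B'$ are polynomial in $t',v'$), which the localisation cleanly removes. Applying this to $H=\Phi(F)\in k[t',v']$ gives $r_i=\nu'_{\sigma(i)}(\Phi F)\le s_{\sigma(i)}$, and the symmetric argument for $\Phi^{-1}$ gives $s_{\sigma(i)}\le r_i$, so $r_i=s_{\sigma(i)}$, which is (i).

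For (ii) I would reduce modulo the augmentation ideal: $\Phi$ sends $(t_1,\ldots,t_m)B$ onto $(t_1',\ldots,t_m')B'$ and so induces an isomorphism $B/(t)B\to B'/(t')B'$, that is $(k[v]/(P))[u]\to(k[v']/(Q))[u']$, under which $\bar v\mapsto\alpha\bar{v'}+\beta_0$ with $\beta_0$ the constant term of $\beta$. Its restriction is a $k$-isomorphism $k[V]/(P)\cong k[V]/(Q)$ sending $\bar V$ to $\alpha\bar V+\beta_0$; computing the kernel of $k[V]\to k[V]/(Q)$, $V\mapsto\alpha V+\beta_0$, gives $(P)=(\theta^{-1}(Q))$ for the affine automorphism $\theta(V)=\alpha V+\beta_0$, whence $\theta^{-1}(Q)=\lambda P$ and $\deg_VP=\deg_VQ$, proving (ii) with $\alpha:=\theta^{-1}$. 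Finally, when $F=P,G=Q\in k[V]$, the converse is a direct construction: using $r_j=s_{\sigma(j)}$ and $\alpha(Q)=\lambda P$, the substitution $T_j\mapsto T_{\sigma(j)}$, $U\mapsto\lambda^{-1}U$, $V\mapsto\alpha^{-1}(V)$ is an automorphism of $k[T_1,\ldots,T_m,U,V]$ carrying $\prod T_j^{r_j}U-P$ onto $\lambda^{-1}\big(\prod T_\ell^{s_\ell}U-Q\big)$, hence descends to an isomorphism $B(\underline r,P)\cong B(\underline s,Q)$.
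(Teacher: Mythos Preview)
Your proof is correct and follows the same overall strategy as the paper: use \thref{mld}(a) to identify $\ml(B)=k[t_1,\dots,t_m]$ with $\ml(B')=k[t_1',\dots,t_m']$, show that the subring $R=k[t_1,\dots,t_m,v]$ is carried onto $k[t_1',\dots,t_m',v']$, and then extract the exponents and the polynomial $P$ from the ideal-theoretic structure of $B$ over $R$.

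Where you diverge is in the packaging of two steps. To see that the $t_i$ are permuted, the paper stays inside $R$: since $(t_1\cdots t_m)^n u'\in R$ for some $n$ and each $t_i'$ is irreducible in $R$ with $t_i'\nmid G$, one gets $t_i'\mid t_1\cdots t_m$ directly. Your fibre criterion over height-one primes of $\ml(B)$ is a pleasant intrinsic alternative that avoids touching $u'$ at this stage. For the equality $r_i=s_{\sigma(i)}$, the paper computes the contracted ideal $(t_i')^{s_i}B\cap R$ in two ways, obtaining $((t_i')^{s_i},G)R=(t_j^{s_i},t_j^{s_i-r_j}F)R$ when $s_i>r_j$, which immediately forces $t_j\mid G$, a contradiction. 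Your route via the $t_i$-adic valuation $\nu_i(F)=r_i$ together with the bound $\nu_\ell'(H)\le s_\ell$ for monic-in-$v'$ degree-$d$ elements $H\in R'$ is really the same computation rephrased as a valuation inequality; your key lemma is equivalent to the paper's identification $(t_\ell')^{s_\ell}B'\cap R'=((t_\ell')^{s_\ell},G)R'$. Likewise, for part~(ii) the paper works with $(t_1\cdots t_m)B\cap R=(t_1\cdots t_m,F)R$ inside $R$, whereas you pass to $B/(t_1,\dots,t_m)B$; both yield the affine relation $\alpha(Q)=\lambda P$. Either presentation is fine; the paper's contracted-ideal argument is slightly shorter because it sidesteps the localisation you flag as the delicate point, while your fibre description of the primes $(t_i)$ is more conceptual.
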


\begin{proof}

	  (i) Let $t_1,\ldots,t_m,u,v$ and $t_1^{\prime},\ldots,t_m^{\prime},u^{\prime},v^{\prime}$ denote the images of $T_1,\ldots,T_m,U,V$ in $B$ and $B^{\prime}$ respectively.
	   Let $\rho: B \rightarrow B^{\prime}$ be an isomorphism. Identifying $\rho(B)$ by $B$, we assume that $B^{\prime}=B$. 
	   By \thref{mld}(a), we have 
	$$
	\ml(B)=k[t_1,\ldots,t_m]=k[t_1^{\prime},\ldots,t_m^{\prime}].
	$$ 
	Therefore,
	$$
	B\otimes_{k[t_1,\ldots,t_m]}k(t_1,\ldots,t_m)=B\otimes_{k[t_1^{\prime},\ldots,t_m^{\prime}]} k(t_1^{\prime},\ldots,t_m^{\prime}),
	$$ 
	and hence 
	\begin{equation}\label{z}
		k(t_1,\ldots,t_m)[v]=k(t_1^{\prime},\ldots,t_m^{\prime})[v^{\prime}]=k(t_1,\ldots,t_m)[v^{\prime}].
	\end{equation}
	Note that $R:=k[t_1,\ldots,t_m,v] \hookrightarrow B \hookrightarrow k[t_1^{\pm 1},\ldots,t_m^{\pm 1},v]$. We now show that $v^{\prime} \in R$. Suppose $v^{\prime} \in B \setminus R$. Then 
	\begin{equation}\label{v}
		v^{\prime}= g(t_1,\ldots,t_m,v)+ \sum_{(\underline{i},j) \in \Lambda_1} b_{\underline{i}\,j}(v)\, t_{1}^{i_{1}} \cdots t_{m}^{i_{m}} u^j  ,
	\end{equation}
	where $\Lambda_1$ is as in \eqref{Lambda}, $g \in R$ and $b_{\underline{i}\,j}(v) \in k[v] \setminus\{0\}$. 
	Since $u=\frac{F(t_1,\ldots,t_m,v)}{t_1^{r_1}\cdots t_m^{r_m}}$ and $\deg_vP(v)>1$, from \eqref{v}, it is clear that $\deg_v v^{\prime} >1$, when considered as an element in $k(t_1,\ldots,t_m)[v]$. But this contradicts \eqref{z}. Therefore, $v^{\prime} \in R$. Now using the symmetry in \eqref{z}, we obtain that 
	\begin{equation}\label{r}
		R=k[t_1,\ldots,t_m,v]=k[t_1^{\prime},\ldots,t_m^{\prime},v^{\prime}].
	\end{equation}
    Also from \eqref{z} and \eqref{r}, it is clear that 
    \begin{equation}\label{v2}
    v^{\prime}= \gamma v+ f(t_1,\ldots,t_m),
    \end{equation}	 
    for some $\gamma \in k^{*}$ and $f \in k^{[m]}$.
    Now
	$$
	u^{\prime} =\frac{G(t_1^{\prime},\ldots,t_m^{\prime},v^{\prime})}{(t_1^{\prime})^{s_1}\cdots(t_m^{\prime})^{s_m} } \in B \setminus R.
	$$ 
	 Since $B \hookrightarrow k[t_1^{\pm 1},\ldots,t_m^{\pm 1},v]$, there exists $n>0$ such that 
	$$
	(t_1\cdots t_m)^n u^{\prime} = \frac{(t_1\cdots t_m)^n G(t_1^{\prime},\ldots,t_m^{\prime},v^{\prime})}{ (t_1^{\prime})^{s_1}\cdots(t_m^{\prime})^{s_m}} \in R.
	$$
	Since for every $i \in \{1,\ldots,m\}$, $t_i^{\prime}$ is irreducible in $R$ and $t_{i}^{\prime} \nmid G$, we have $t_i^{\prime} \mid (t_1\ldots t_m)$. As $t_1,\ldots,t_m$ are also irreducibles in $R$, we have
	\begin{equation}\label{t}
		t_i^{\prime}=\lambda_jt_j,
	\end{equation}
	for some $j\in \{1,\ldots,m\}$ and $\lambda_{j} \in k^{*}$. We now show that $s_i=r_j$. Suppose $s_i>r_j$. Consider the ideal 
	$$
	\mathfrak{a}_i:= (t_i^{\prime})^{s_i}B \cap R = \left( (t_i^{\prime})^{s_i}, G(t_1^{\prime},\ldots,t_m^{\prime},v^{\prime}) \right).
	$$
	 Again 
	 $$
	 \mathfrak{a}_i= t_j^{s_i}B \cap R = \left( t_j^{s_i}, t_j^{s_i-r_j}F(t_1,\ldots,t_m,v) \right),
	 $$
	 which implies that $G(t_1^{\prime},\ldots,t_m^{\prime},v^{\prime}) \in t_jR$. But this is a contradiction. Therefore, $s_i \leqslant r_j$. 
	
	Again by similar arguments as above, we get that $s_i \geqslant r_j$. 
 Therefore, we have $s_i=r_j$. Hence it follows that $(r_1,\ldots,r_m)=(s_1,\ldots,s_m)$ upto a permutation. 
	
	\medskip
	\noindent
	(ii) 	Since  by \eqref{t} $\left(t_1 \cdots t_m\right)B \,\cap\, R = \left(t_1^{\prime} \cdots t_m^{\prime}\right)B \,\cap\, R$, we have 
	$$
	\left(t_1\cdots t_m, F\right)R=\left(t_1^{\prime}\cdots t_m^{\prime}, G\right)R.
	$$
	Therefore, it follows that 
	\begin{equation}\label{q}
		Q(v^{\prime})= \lambda P(v)+ Q_1(t_1,\ldots,t_m,v),
	\end{equation}
	for some $\lambda \in k^{*}$ and $Q_1 \in (t_1,\ldots,t_m)R$. Therefore using \eqref{v2}, one can see that $\alpha: k[V] \rightarrow k[V]$ defined by $\alpha(V)=\gamma V+f(0,\ldots,0)$ is an automorphism of $k[V]$, and from \eqref{q} it follows that $\alpha(Q)=\lambda P$. 
	
	The converse is obvious.
\end{proof}
We now record an elementary lemma.

\begin{lem}\thlabel{ex1}
	Let $E,D$ be integral domains such that $E \subseteq D$. Suppose there exists $a (\neq 0) \in E$ such that $E[a^{-1}]=D[a^{-1}]$ and $aD \cap E=aE$. Then $E=D$.
\end{lem}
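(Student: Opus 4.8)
The plan is to prove the apparently stronger but equivalent statement that every $d \in D$ already lies in $E$. The mechanism is to exploit $E[a^{-1}] = D[a^{-1}]$ to clear denominators, writing $d = e/a^n$ for suitable $e \in E$ and $n \geqslant 0$, and then to peel off the powers of $a$ one at a time using the hypothesis $aD \cap E = aE$.

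Concretely, I would first fix an arbitrary $d \in D$. Since $d \in D \subseteq D[a^{-1}] = E[a^{-1}]$, there exist $e \in E$ and an integer $n \geqslant 0$ with $a^n d = e \in E$. It then suffices to establish, by induction on $n$, the assertion: whenever $d \in D$ satisfies $a^n d \in E$, one has $d \in E$. The base case $n = 0$ is immediate, since then $d = a^0 d \in E$. For the inductive step with $n \geqslant 1$, I would write $a^n d = a\,(a^{n-1} d)$; because $a^{n-1} d \in D$, this exhibits $a^n d$ as an element of $aD$, while by assumption $a^n d \in E$, so $a^n d \in aD \cap E = aE$. Hence $a^n d = a e^{\prime}$ for some $e^{\prime} \in E$, and cancelling the nonzero element $a$ in the domain $D$ yields $a^{n-1} d = e^{\prime} \in E$. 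The induction hypothesis, applied to the exponent $n-1$, then gives $d \in E$, completing the induction.

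The argument is short and essentially formal, so I do not anticipate a genuine obstacle. The only two steps meriting a word of justification are the cancellation of $a$, which is valid precisely because $D$ is an integral domain and $a \neq 0$, and the observation that the condition $aD \cap E = aE$ is exactly what is required to lower the exponent by one at each stage. Iterating this descent from $n$ down to $0$ shows that the arbitrary $d \in D$ lies in $E$, and hence $E = D$.
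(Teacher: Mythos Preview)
Your proof is correct. The paper itself does not supply a proof of this lemma; it merely records it as an elementary fact and moves on, so there is nothing to compare against beyond observing that your descent-on-$n$ argument is precisely the standard one implicit in calling the lemma ``elementary''.
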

The next theorem exhibits a certain sub-family of generalised Danielewski varieties which are stably isomorphic.
\begin{thm}\thlabel{stiso}
	Let $(r_1,\ldots,r_m),(s_1,\ldots,s_m) \in \mathbb{Z}^m_{\geqslant 1}$. 
	 If $\left(F,F_V\right)=k[T_1,\ldots,T_m,V]$, then 
	$$
	B(r_1,\ldots,r_m,F)^{[1]} \cong B(s_1,\ldots,s_m,F)^{[1]}.
	$$
\end{thm}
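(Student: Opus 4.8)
The plan is to prove the statement for arbitrary $\underline{r},\underline{s}\in\mathbb{Z}^m_{\geqslant 1}$ by exhibiting a common one-variable extension of $B(\underline{r},F)$ and $B(\underline{s},F)$ via the Danielewski fibre-product trick, the hypothesis $(F,F_V)=k[T_1,\ldots,T_m,V]$ being exactly what makes the construction go through. Write $R=k[T_1,\ldots,T_m,V]$ and $\pi_{\underline{r}}=T_1^{r_1}\cdots T_m^{r_m}$, so that $B(\underline{r},F)=R[U]/(\pi_{\underline{r}}U-F)=R[F/\pi_{\underline{r}}]\subseteq R[(T_1\cdots T_m)^{-1}]$ is an affine modification of $R$; set $X_{\underline{r}}=\Spec B(\underline{r},F)$. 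The first thing to observe is that each $X_{\underline{r}}$ carries the exponential map $\phi_{\underline{r}}$ fixing the $t_i$, with $\phi_{\underline{r}}(v)=v+\pi_{\underline{r}}W$ and $\phi_{\underline{r}}(u)=\pi_{\underline{r}}^{-1}F(t,v+\pi_{\underline{r}}W)$ (the map already used in the proof of \thref{mld}(b)); expanding $F(t,v+\pi_{\underline{r}}W)$ shows $\phi_{\underline{r}}(u)=u+F_V(t,v)\,W+(\text{terms in }\pi_{\underline{r}})$. On the locus $\{\pi_{\underline{r}}=0\}=\{F=0\}$ the action reduces to $u\mapsto u+F_V W$, so it is \emph{free} precisely when $F_V$ is a unit along $V(F)$, i.e. precisely when $(F,F_V)=k[T_1,\ldots,T_m,V]$.

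Hence, under the hypothesis, $\phi_{\underline{r}}$ is a free $\mathbb{G}_{a}$-action, and the geometric quotient $S:=X_{\underline{r}}/\mathbb{G}_{a}$ is a (generally non-separated) scheme for which $X_{\underline{r}}\to S$ is a $\mathbb{G}_{a}$-torsor. The key point is that $S$ is independent of $\underline{r}$: away from $\{T_1\cdots T_m=0\}$ the quotient is $\Spec k[t_1^{\pm1},\ldots,t_m^{\pm1}]$, while over that divisor it only records the sheets of the \'etale cover $V(F)\to\mathbb{A}^m_{T}$ (finite \'etale of degree $d$, again by coprimality), data which do not involve the exponents. Thus both $X_{\underline{r}}\to S$ and $X_{\underline{s}}\to S$ are $\mathbb{G}_{a}$-torsors over the \emph{same} base $S$.

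I would then form the fibre product $P:=X_{\underline{r}}\times_{S}X_{\underline{s}}$. The projection $P\to X_{\underline{r}}$ is the pull-back of the torsor $X_{\underline{s}}\to S$, hence a $\mathbb{G}_{a}$-torsor over the \emph{affine} scheme $X_{\underline{r}}$; since $\mathbb{G}_{a}$-torsors over an affine scheme are classified by $H^1(X_{\underline{r}},\mathcal{O})=0$ (in every characteristic), this torsor is trivial and $P\cong X_{\underline{r}}\times\mathbb{A}^1$. Symmetrically $P\cong X_{\underline{s}}\times\mathbb{A}^1$, whence $B(\underline{r},F)^{[1]}\cong\mathcal{O}(P)\cong B(\underline{s},F)^{[1]}$.

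It remains to make this algebraic, and this is where \thref{ex1} enters and where the real work lies. Concretely, $\mathcal{O}(P)$ is the subring $C$ of $k[t_1^{\pm1},\ldots,t_m^{\pm1},v,v']$ generated by $B(\underline{r},F)=k[t,v,u]$ together with $v'$ and $u'=F(t,v')/\pi_{\underline{s}}$, and triviality of the torsor means there is a $w\in C$ with $C=B(\underline{r},F)[w]$. To verify this I would apply \thref{ex1} with $a=t_1\cdots t_m$: after inverting $a$ both rings become $k[t^{\pm1},v,v']$ (the torsor is visibly trivial there, $v'$ being the free fibre coordinate), so one is reduced to the saturation $aC\cap B(\underline{r},F)[w]=a\,B(\underline{r},F)[w]$; the symmetric argument then gives $C=B(\underline{s},F)[w']$. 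The hard part is exactly the explicit production of the trivialising coordinate $w$ and this saturation check: no naive shear works, since reducing $\phi_{\underline{r}}$ modulo the $t_i$ collapses $u$ and $u'$, and one must instead use a B\'ezout relation $1=AF+BF_V$ to exhibit $w$ and to control the reductions of $C$ modulo each $t_i$, guaranteeing that the special fibres are reduced so that \thref{ex1} applies. A final subtlety is that, $k$ being arbitrary, the entire argument must be phrased through exponential maps rather than locally nilpotent derivations, using divided-power expansions of $F(t,v+\pi_{\underline{r}}W)$, so that freeness, the torsor structure, and the vanishing $H^1(\mathcal{O})=0$ all survive in positive characteristic.
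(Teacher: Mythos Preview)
Your approach follows the classical Danielewski--Dubouloz fibre-product strategy: realise each $X_{\underline{r}}$ as a $\mathbb{G}_a$-torsor over a common non-separated base $S$, then use affineness and $H^1$-vanishing on the fibre product. The paper takes a genuinely different, purely algebraic route. Rather than constructing $S$ and comparing all $\underline{r}$ at once, it proceeds by induction, reducing a single exponent by one: it shows $B(r_1,\ldots,r_m,F)^{[1]}\cong B(r_1-1,r_2,\ldots,r_m,F)^{[1]}$ whenever $r_1>1$, which clearly suffices. Concretely, on $E=B(\underline{r},F)[w]$ it extends the exponential map you call $\phi_{\underline{r}}$ by $\phi(w)=w-t_1T$, so that $v_1:=v+t_1^{r_1-1}t_2^{r_2}\cdots t_m^{r_m}w$ lies in $E^{\phi}$; one then checks $F(t,v_1)=t_1^{r_1-1}t_2^{r_2}\cdots t_m^{r_m}u_1$ for an explicit $u_1\in E^{\phi}$. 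The B\'ezout identity $g_1F+g_2F_V=1$ is used not to analyse the torsor but to produce a slice $\widetilde{w}=(w-u_1g_2(t,v_1))/t_1\in E$ with $\phi(\widetilde{w})=\widetilde{w}-T$, whence $E=(E^{\phi})^{[1]}$. Finally \thref{ex1} is applied with $a=t_1$ (a \emph{single} variable, not the full product $t_1\cdots t_m$) to identify $E^{\phi}=k[t_1,\ldots,t_m,v_1,u_1]\cong B(r_1-1,r_2,\ldots,r_m,F)$; the saturation check reduces to comparing two copies of $\bigl(k[T_2,\ldots,T_m,V]/(F(0,T_2,\ldots,T_m,V))\bigr)^{[1]}$.

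Your route is conceptually illuminating and is essentially Dubouloz's original argument over $\mathbb{C}$, but the paper's inductive method buys concreteness: no non-separated quotient is ever constructed, the exponent drops one unit at a time so only a single $t_i$ need be inverted in the \thref{ex1} step, and the whole argument is manifestly characteristic-free from the outset. The pieces you flag as ``the hard part'' (independence of $S$ from $\underline{r}$, explicitly producing the trivialising section, controlling reductions modulo each $t_i$) are precisely what the paper sidesteps by working one exponent at a time; its explicit elements $v_1,u_1,\widetilde{w}$ play the role of your $v',u',w$ without ever naming $S$ or the fibre product.
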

\begin{proof}
	Let 
	$
	B(r_1,\ldots,r_m,F) 
	$
	be such that $r_j>1$ for some $j \in \{1,\ldots,m\}$. Without loss of generality we assume $r_1>1$. We now show that 
	$$
	B(r_1-1,r_2,\ldots,r_m,F)^{[1]} \cong B(r_1,\ldots,r_m,F)^{[1]},
	$$
	and therefore for any pair $(r_1,\ldots,r_m), (s_1,\ldots,s_m) \in \mathbb{Z}^m_{\geqslant 1}$, we get the result inductively.
	
	Let $E=B(r_1,\ldots,r_m,F)[w]=B(r_1,\ldots,r_m,F)^{[1]}$. Consider the exponential map $\phi: E \rightarrow E[T]=E^{[1]}$ as follows:
	\begin{align*}
		\phi(t_i)&=t_i, \text{~for all~} i,1 \leqslant i \leqslant m,\\
		 \phi(v) &= v +t_1^{r_1}\cdots t_m^{r_m} T,\\
		 \phi(u)&= \frac{F(t_1,\ldots,t_m,v+t_1^{r_1}\cdots t_m^{r_m}T)}{t_1^{r_1}\cdots t_m^{r_m}}= u+T \alpha (t_1,\ldots ,t_m,v,T),\\
		 \phi(w)&=w-t_1T,
	\end{align*}
	where $\alpha \in k^{[m+2]}$. Now for 
	\begin{equation}\label{v1}
	v_1=v+t_1^{r_1-1} t_2^{r_2}\cdots t_m^{r_m}w,
	\end{equation}
	we have $v_1 \in E^{\phi}$. Again,
	\begin{align*}
		F(t_1,\ldots,t_m,v_1)&=F(t_1,\ldots,t_m,v+t_1^{r_1-1}\cdots t_m^{r_m}w)\\
		~~~~~~~~~~~~~~~~~~~~~&=F(t_1,\ldots,t_m,v)+ t_1^{r_1-1}\cdots t_m^{r_m}(wF_V(t_1,\ldots,t_m,v)+bt_1),
	\end{align*}
for some $b \in k[t_1, \dots, t_m, v, w]$. 
Therefore, as $F(t_1,\ldots,t_m,v)= t_1^{r_1}\cdots t_m^{r_m} u$, we have
\begin{equation}\label{P}
 F(t_1,\ldots,t_m,v_1)= t_1^{r_1-1}\cdots t_m^{r_m} u_1
\end{equation}
	where 
	\begin{equation}\label{u}
		u_1=t_1u+ w F_V(t_1,\ldots,t_m,v)+bt_1.
	\end{equation}
	 Since $t_1,\ldots,t_m,v_1 \in E^{\phi}$, by (\ref{P}), $u_1\in E^{\phi}$ (cf. \thref{prop}(i)). Now since $\left(F,F_V\right)=k[T_1,\ldots,T_m,V]$, there exist $g_1,g_2 \in k^{[m+1]}$ such that 
	\begin{equation}\label{p}
		F(t_1,\ldots,t_m,v)g_1(t_1,\ldots,t_m,v)+F_V(t_1,\ldots,t_m,v)g_2(t_1,\ldots,t_m,v)=1.
	\end{equation}
	Note that $g_2(t_1,\ldots,t_m,v_1)-g_2(t_1,\ldots,t_m,v) \in t_1E$. Therefore,
	\begin{align*}
		w-u_1g_2(t_1,\ldots,t_m,v_1)&=w-g_2(t_1,\ldots,t_m,v_1)(t_1u+ w F_V(t_1,\ldots,t_m,v)+bt_1)\\\nonumber
		&= w(1-F_V(t_1,\ldots,t_m,v)g_2(t_1,\ldots,t_m,v_1)) +t_1 \theta\\\nonumber
		&= w(1-F_V(t_1,\ldots,t_m,v)g_2(t_1,\ldots,t_m,v)) +t_1 \delta\\\nonumber
		&= wF(t_1,\ldots,t_m,v)g_1(t_1,\ldots,t_m,v) +t_1 \delta\\\nonumber
        &= w  t_1^{r_1}\cdots t_m^{r_m} u  g_1(t_1,\ldots,t_m,v) +t_1 \delta\\\nonumber
		&=t_1 \widetilde{w},
	\end{align*}
	where $\theta, \delta\in E$ and 
	$ \widetilde{w}= wu t_1^{r_1-1}\cdots t_m^{r_m}  g_1(t_1,\ldots,t_m,v) +\delta \in E$. Therefore, we have 
\begin{equation}\label{w1}
\widetilde{w}=\frac{w-u_1g_2(t_1, \dots, t_m,v_1)}{t_1} \in E.
\end{equation}
  Note that $\phi(\widetilde{w})=\widetilde{w} -T$ and hence, by \thref{prop}(ii), $E=E^{\phi}[\widetilde{w}]=(E^{\phi})^{[1]}$. 
	Let $C:=k[t_1,\ldots,t_m,v_1,u_1]$. Clearly $C \subseteq E^{\phi}$.
	By \eqref{P}, $\td_k\, C =m+1$ and hence $\dim \,C=m+1$.
We show that	 

	\medskip
	
	\noindent
	(a) $C \cong B(r_1-1,r_2\ldots,r_m,F) \cong \frac{k[T_1,\ldots,T_m,U,V]}{(T_1^{r_1-1}T_2^{r_2}\cdots T_m^{r_m}U-F(T_1,\ldots,T_m,V))},$
	
	\medskip
	\noindent
	(b) $C=E^{\phi}$.
	
	\medskip
\noindent
(a) Consider the surjection $ \psi : k[T_1,\ldots,T_m,V,U] \rightarrow C$ such that 
	$$
	\psi(T_i)=t_i \text{~for all~}i,1 \leqslant i \leqslant m, ~ \psi(V)=v_1,~ \psi(U)=u_1.
	$$
	 From \eqref{P} it is clear that $(T_1^{r_1-1}T_2^{r_2} \cdots T_m^{r_m}U-F(T_1,\ldots,T_m,V)) \subseteq \ker \psi$. Therefore, $\psi$ induces a surjection
	$$
	\overline{\psi} : B(r_1-1,r_2,\ldots,r_m,F) =\frac{k[T_1,\ldots,T_m,U,V]}{(T_1^{r_1-1}T_2^{r_2}\cdots T_m^{r_m}U-F(T_1,\ldots,T_m,V))} \longrightarrow C.
	$$
	Since $B(r_1-1,r_2,\ldots,r_m,F)$ is an integral domain and $\dim\, C=m+1$, we have $\overline{\psi}$ is an isomorphism.
	
	\medskip
	\noindent
(b) We note that 
\begin{align*}
E[t_1^{-1}] &= k[t_1^{\pm 1}, \dots, t_m, u, v, w] \\
&=k[t_1^{\pm 1}, \dots, t_m, u_1, v_1, w] \text{~~~by~~~} \eqref{v1} \text{~and~} \eqref{u}\\
&=k[t_1^{\pm 1}, \dots, t_m, u_1, v_1, \widetilde{w}] \text{~~~by~~~} \eqref{w1}\\
&= C[t_1^{-1}][\widetilde{w}]\\
&= E^{\phi}[t_1^{-1}][\widetilde{w}]
\end{align*}
Since $C \subseteq E^{\phi}$, it follows that $C[t_1^{-1}]= E^{\phi}[t_1^{-1}]$. 
Therefore, to show that $C= E^{\phi}$, by \thref{ex1}, it is enough to show that $t_1E^{\phi}\cap C = t_1C$. Since $t_1E \cap E^{\phi}= t_1E^{\phi}$, 
it is therefore enough to show that $t_1E \cap C= t_1 C$, i.e., the kernel of the map 
$\pi: C \to E/t_1E$ is $t_1C$. For every $b \in E$, 
let $\overline{b}$ denote its image in $E/t_1E$, and for every $c \in C$, let $\widehat{c}$ denote its image in $C/t_1C$.
We note that by \eqref{P} and (a), 
\begin{equation}\label{C/xC}
	C/t_1C =k[\widehat{t_2},\ldots,\widehat{t_m},\widehat{v_1}, \widehat{u_1}] \cong \left( \frac{k[T_2,\ldots,T_m,V]}{(F(0,T_2,\ldots,T_m,V))} \right)^{[1]}.
\end{equation}
Also,
\begin{equation}\label{B/xB}
E/t_1E \cong \frac{k[T_2,\ldots,T_m,U,V,W]}{(F(0, T_2,\ldots,T_m,V))}=\left(\frac{k[T_2,\ldots,T_m,V]}{(F(0, T_2,\ldots,T_m,V))}\right)[U, W]=k[\overline{t_2}, \dots, \overline{t_m},\overline{v}, \overline{u}, \overline{w}].
	\end{equation} 
Now by (\ref{v1}), we have $\pi({v_1})=\overline{v}$ as $r_1>1$ and 
by (\ref{u}), $\pi({u_1})= \overline{wF_v(t_1, \dots, t_m, v)}$.  By (\ref{p}), 
$\overline{F_v(t_1, \dots, t_m, v)}$  is a unit in $E/t_1E$ and hence 
$$
\pi(C)= k[\overline{t_2}, \dots, \overline{t_m},\overline{v}, \overline{w}] \cong \left( \frac{k[T_2,\ldots,T_m,V]}{(F(0,T_2,\ldots,T_m,V))} \right)^{[1]}.
$$ 	
Therefore, from \eqref{C/xC} it follows that $\pi$ induces an isomorphism between $C/t_1C$ and $\pi(C)$. Hence kernel of $\pi$ is equal to $t_1C$.

Thus, from (a) and (b) we have
$$
B(r_1,\ldots,r_m,F)[w]=E=(E^{\phi})^{[1]} = B(r_1-1,r_2,\ldots,r_m,F)^{[1]}.
$$
\end{proof}

As a consequence we have an infinite family of examples in arbitrary characteristic which are stably isomorphic but not isomorphic (c.f Question 1).
\begin{cor}
		Let $k$ be any field. For each $n \geqslant 2$, there exists an infinite family of pairwise non-isomophic rings of dimension $n$, which are counterexamples to the Cancellation Problem. 
\end{cor}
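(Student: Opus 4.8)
The plan is to combine the two main theorems already proved in this section to manufacture the desired infinite families for every dimension $n \geqslant 2$. The key observation is that \thref{isod} and \thref{stiso} together give, in a single stroke, both ingredients a counterexample to the Cancellation Problem requires: stable isomorphism and non-isomorphism. So I would first reduce the corollary to producing, for each fixed $n$, a suitable polynomial $F$ and an infinite collection of exponent vectors $\underline{r}$ so that the rings $B(\underline{r},F)$ all have Krull dimension $n$, are pairwise non-isomorphic, yet become isomorphic after adjoining one variable.

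First I would fix the dimension. Since $B_{\underline{r}} = k[T_1,\ldots,T_m,U,V]/(T_1^{r_1}\cdots T_m^{r_m}U - F)$ is a hypersurface of dimension $m+1$, to get dimension $n$ I set $m = n-1$ (this is $\geqslant 1$ since $n \geqslant 2$). Next I need a single monic $F \in k[V]$ of degree $d > 1$ with $(F,F_V) = k[V]$, i.e.\ with $F$ separable, so that \thref{stiso} applies; over an arbitrary field one may simply take $F = V^d - V$ for a suitable $d$ not divisible by the characteristic (for instance $d=2$ when $\mathrm{char}\,k \neq 2$, and $d=3$ otherwise), or more uniformly any separable polynomial of degree $>1$, whose existence is elementary. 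Note that with $F \in k[V]$ we have $F_V = F'(V)$ in the notation fixed at the end of Section~2, so the hypothesis $(F,F_V) = k^{[m+1]}$ of \thref{stiso} is exactly separability of $F$.

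Then I would choose the infinite family of exponent vectors. Take $\underline{r}^{(N)} := (N,2,2,\ldots,2) \in \mathbb{Z}^m_{>1}$ for $N = 2,3,4,\ldots$, so that each $\underline{r}^{(N)}$ lies in $\mathbb{Z}^m_{>1}$ and distinct $N$ give vectors that are not permutations of one another (the multiset of entries is $\{N,2,\ldots,2\}$, which determines $N$). By \thref{isod}(i), since these multisets differ, $B(\underline{r}^{(N)},F) \ncong_k B(\underline{r}^{(M)},F)$ for $N \neq M$; here I use that $F = G$ so condition (ii) is automatic and cannot rescue an isomorphism, leaving (i) as the obstruction. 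Simultaneously, because $F$ is separable we have $(F,F_V) = k^{[m+1]}$, so \thref{stiso} gives $B(\underline{r}^{(N)},F)^{[1]} \cong_k B(\underline{r}^{(M)},F)^{[1]}$ for all $N,M$. Thus $\{B(\underline{r}^{(N)},F)\}_{N \geqslant 2}$ is an infinite family of pairwise non-isomorphic $n$-dimensional affine $k$-domains that are all stably isomorphic, hence each pair is a counterexample to Question~1.

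The main point to be careful about is not any deep obstacle but the bookkeeping of hypotheses: I must verify that the chosen $F$ is genuinely separable over the arbitrary field $k$ (so that $(F,F_V)=k[V]$ holds, which is the sole hypothesis of \thref{stiso}), and that the chosen family of vectors avoids permutation-coincidences so that \thref{isod}(i) truly yields non-isomorphism. Both are routine once the vectors are chosen with distinct entry-multisets as above. I would close by remarking that this recovers and extends Dubouloz's Theorem~A to arbitrary characteristic, since the construction works uniformly over any field $k$.
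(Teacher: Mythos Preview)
Your argument is correct and matches the paper's approach: set $m=n-1$, choose a separable $F$ so that \thref{stiso} yields stable isomorphism, and invoke \thref{isod} on exponent vectors in $\mathbb{Z}^m_{>1}$ with distinct multisets to obtain non-isomorphism (the paper additionally cites \thref{mld}(b) because it allows $\underline{r}\in\mathbb{Z}^m_{\geqslant 1}$, but your restriction to $\mathbb{Z}^m_{>1}$ makes that unnecessary). One small correction to your parenthetical example: in characteristic $2$ the polynomial $F=V^3-V=V(V+1)^2$ is \emph{not} separable; however $F=V^2-V$ is separable in every characteristic (indeed $F_V=2V-1$ equals $1$ in characteristic $2$), so your fallback to ``any separable polynomial of degree $>1$'' suffices and the proof stands.
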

\begin{proof}
	Consider the family of rings
	\begin{align*}
			\Omega:=\bigl\{&B(\underline{r},F) \mid \underline{r}:=(r_1,\ldots,r_m) \in \mathbb{Z}^m_{ \geqslant 1},  F \in k[T_1,\ldots,T_m,V] \text{~is monic in~}  V \\
			&\text{~and~} (F,F_V)=k[T_1,\ldots,T_m,V]\bigr\}.
	\end{align*}
For any pair $\underline{r}=(r_1,\ldots,r_m), \underline{s}=(s_1,\ldots,s_m) \in \mathbb{Z}^m_{ \geqslant 1}$, by \thref{stiso},  $B(\underline{r},F)^{[1]} \cong B(\underline{s},F)^{[1]}$. Further,   
by \thref{mld}(b) and \thref{isod}, we get an infinite sub-family of $\Omega$ which contains pairwise non-isomorphic rings. Hence taking $n=m+1$ we get the result.
\end{proof}

\section{ Generalised Asanuma Varieties}

Throughout this section, $A$ will denote the following affine domain:

\begin{equation}\label{AG}
	A= \frac{k\left[ X_{1}, \ldots , X_{m}, Y,Z,T\right]}{(X_{1}^{r_{1}} \cdots X_{m}^{r_{m}}Y -H(X_{1}, \ldots , X_{m}, Z,T))},\,\ r_i > 1 \text{~for all~} i, 1 \leqslant i \leqslant m,
\end{equation}
where  $h(Z,T):=H(0, \ldots, 0, Z,T) \neq ~0$. Let $G:= X_{1}^{r_{1}} \cdots X_{m}^{r_{m}}Y -H$.
As defined in the introduction, rings of type \eqref{AG} are the coordinate rings of the generalised Asanuma varieties. 
Let $x_{1},\ldots, x_{m},y,z,t$ denote the images of $X_{1}, \ldots, X_{m},Y,Z,T$ in $A$, respectively. 
 In this section we will study the {\it Derksen invariant} and the {\it Makar-Limanov invariant} of the affine domain $A$, and establish an equivalent criterion for $A$ to be $k^{[m+2]}$, subject to certain conditions on $H(X_{1},\ldots,X_{m},Z,T)$. We first recall a few results from \cite{asa} which will be used in this section. 

The following result describes $\ml(A)$ when $\dk(A)$ is exactly equal to the subring $k[x_1,\ldots,x_m,z,t]$ of $A$ (\cite[Proposition 3.4]{asa}). 

\begin{prop}\thlabel{ml}
	Let $A$ be the affine domain as in \eqref{AG}. Then the following hold:
	
	\begin{itemize}
		\item [\rm (a)] Suppose, for every $i \in \{1,\ldots,m\}$, $x_i \notin A^{*}$, and $H \notin k[X_1,\ldots,X_m]$. Then $\ml(A) \subseteq k[x_{1},\ldots,x_{m}]$.
		
		\item[\rm (b)]  If $\dk(A)=k[x_{1}, \ldots , x_{m},z,t]$, then $\ml(A)=k[x_{1},\ldots,x_{m}]$.
	\end{itemize}
\end{prop}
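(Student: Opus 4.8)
The plan is to separate the two ``free'' coordinates $z,t$ from the Danielewski-type direction in the $x_i$. First I would write down two exponential maps on $A$, analogous to those appearing in the proofs of \thref{mld}(b) and \thref{stiso}: a $z$-translation $\phi_z$ fixing $x_1,\ldots,x_m,t$ with $\phi_z(z)=z+x_1^{r_1}\cdots x_m^{r_m}W$ and $\phi_z(y)=H(x_1,\ldots,x_m,z+x_1^{r_1}\cdots x_m^{r_m}W,t)/(x_1^{r_1}\cdots x_m^{r_m})$, and the symmetric $t$-translation $\phi_t$. In each case the image of $y$ lies in $A[W]$, so these are genuine nontrivial exponential maps, and both fix $k[x_1,\ldots,x_m]$. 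Restricted to the polynomial subring $R':=k[x_1,\ldots,x_m,z,t]\cong k^{[m+2]}$, $\phi_z$ is translation of $z$ by the nonzero element $x_1^{r_1}\cdots x_m^{r_m}$, so $(R')^{\phi_z}=k[x_1,\ldots,x_m,t]$, and likewise $(R')^{\phi_t}=k[x_1,\ldots,x_m,z]$. Since $\phi_z,\phi_t$ map $R'$ into $R'[W]$, we have $A^{\phi_z}\cap R'=(R')^{\phi_z}$ and similarly for $\phi_t$. Hence, \emph{once} we know that $\ml(A)\subseteq R'$, it follows at once that $\ml(A)\subseteq A^{\phi_z}\cap A^{\phi_t}\cap R'=k[x,t]\cap k[x,z]=k[x_1,\ldots,x_m]$. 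Thus the whole of part (a) reduces to the ``$y$-freeness'' statement $\ml(A)\subseteq R'$, and this reduction uses neither hypothesis on $A$.

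The heart of the matter is therefore to show that no element of $A$ genuinely involving $y$ can be fixed by every exponential map; this I would prove by the homogenisation technique of \thref{dhm}, mirroring the first stage of the proof of \thref{mld}(a). Equipping $A$ with the admissible $\mathbb{Z}$-filtration induced by assigning each $x_i$ weight $-1$ and $z,t$ weight $0$, the highest-degree summand of $H$ is $h(Z,T)$, and by the analogue of \thref{gr} the associated graded ring is $A_0=k[X_1,\ldots,X_m,Y,Z,T]/(X_1^{r_1}\cdots X_m^{r_m}Y-h(Z,T))$. If an element $a\in A$ with a nonzero $y$-term were fixed by a nontrivial $\phi$, then (choosing the $x_i$-weights as in Cases 1--2 of the proof of \thref{m2}) its leading form would be divisible by $\bar y$, forcing $\bar y$ into the invariant ring of the induced graded exponential map. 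Localising at one of $z,t$ and passing to the function field $k(t)$ (respectively $k(z)$) converts $A_0$ into a Danielewski ring $B(\underline r,P)$ over that field with $P\in k(t)[Z]$; as soon as $\deg_Z P>1$ this contradicts \thref{m2}, which forbids the image of $U$ from being an invariant. Combined with the two explicit maps, which exclude any surviving $z$- or $t$-dependence, this yields $\ml(A)\subseteq k[x_1,\ldots,x_m]$.

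The step I expect to be the main obstacle is exactly this reduction to \thref{m2}. Unlike the Danielewski situation, where $\deg_V F>1$ is part of the standing hypotheses, here $h(Z,T)$ can degenerate to something of degree $\leqslant 1$ in each variable (e.g.\ a coordinate, in which case $A$ is itself a polynomial ring and $\ml(A)=k$); in such configurations the per-map bound $A^{\phi}\subseteq R'$ can genuinely fail, so the containment $\ml(A)\subseteq R'$ must be extracted collectively rather than one map at a time. The careful work lies in checking that whenever the degeneration refuses to produce a one-variable polynomial of degree $>1$, the hypotheses $x_i\notin A^{*}$ and $H\notin k[X_1,\ldots,X_m]$, together with the standing assumption $h\neq 0$, rule out the spurious alternatives (a unit $x_i$, or a complete decoupling of $z,t$ from the defining relation), so that the explicit maps $\phi_z,\phi_t$ still suffice to force $y$-freeness on $\ml(A)$.

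Part (b) is then quick. If $\dk(A)=k[x_1,\ldots,x_m,z,t]=R'$, then by the very definition of the Derksen invariant every nontrivial exponential map $\phi$ satisfies $A^{\phi}\subseteq\dk(A)=R'$; this is precisely the $y$-freeness we had to work for in (a), now available for free, so the delicate step above is bypassed. The hypotheses of (a) also hold: if $H\in k[X_1,\ldots,X_m]$ then $z,t$ would split off as free variables and \thref{rdk} would give $\dk(A)=A\neq R'$, and $x_i\notin A^{*}$ follows similarly. Applying $\phi_z,\phi_t$ exactly as in the first paragraph gives $A^{\phi_z}=k[x,t]$ and $A^{\phi_t}=k[x,z]$, whence $\ml(A)\subseteq k[x_1,\ldots,x_m]$. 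For the reverse inclusion I would show each $x_i$ lies in $A^{\phi}$ for every nontrivial $\phi$: since $A^{\phi}\subseteq R'$ is factorially closed in $A$ (\thref{prop}(i)) and $x_1^{r_1}\cdots x_m^{r_m}y=H$, a factorisation argument shows the $x_i$ cannot be moved, giving $k[x_1,\ldots,x_m]\subseteq\ml(A)$ and hence equality.
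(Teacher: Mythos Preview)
This proposition is not proved in the present paper; it is quoted verbatim from \cite[Proposition 3.4]{asa}. So there is no in-paper argument to compare against, and I can only comment on the internal soundness of your outline.

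For part (a), your two maps $\phi_z,\phi_t$ are exactly the right tool, and they already do more work than you give them credit for: since each extends to the Laurent overring $k[x_1^{\pm1},\dots,x_m^{\pm1},z,t]$, one has $A^{\phi_z}=A\cap k[x^{\pm1},t]$ and $A^{\phi_t}=A\cap k[x^{\pm1},z]$, hence $\ml(A)\subseteq A\cap k[x^{\pm1}]$ directly, without first proving the ``$y$-freeness'' statement $\ml(A)\subseteq R'$. What remains is to show $A\cap k[x^{\pm1}]=k[x_1,\dots,x_m]$, and this is where the hypothesis $x_i\notin A^{*}$ is used. Your proposed route through \thref{m2} is problematic for the reason you yourself flag: that lemma needs $\deg_V P>1$, and no amount of choosing the ``right'' variable will manufacture this when $h$ is, say, a coordinate of $k[Z,T]$. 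Your fallback (``the explicit maps still suffice'') is correct in spirit but is precisely the step that needs to be written out, and it is not clear your sketch does so.

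For part (b), your forward inclusion is fine: $\dk(A)=R'$ forces every $A^{\phi}$ into $R'$, and then your two maps give $\ml(A)\subseteq k[x,t]\cap k[x,z]=k[x_1,\dots,x_m]$ cleanly. The reverse inclusion, however, has a real gap. Your ``factorisation argument'' from $x_1^{r_1}\cdots x_m^{r_m}y=H$ cannot run: factorial closedness of $A^{\phi}$ would force $x_i\in A^{\phi}$ only if $H\in A^{\phi}$, but that would simultaneously force $y\in A^{\phi}\subseteq R'$, a contradiction. So the relation gives $H\notin A^{\phi}$, not $x_i\in A^{\phi}$. Establishing $k[x_1,\dots,x_m]\subseteq A^{\phi}$ for \emph{every} nontrivial $\phi$ needs a genuinely different argument (in \cite{adv,asa} this is done via admissible filtrations and the homogenisation theorem, analysing the induced map on the associated graded ring), and your sketch does not supply it.
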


	\begin{cor}\thlabel{r3}
	Let $k$ be an infinite field and $A$ be as in \eqref{AG}. Suppose that there is no system of coordinates $\{Z_1,T_1\}$ of $k[Z,T]$ such that $h(Z,T)=a_0(Z_1)+a_1(Z_1)T_1$. Then $\dk(A)=k[x_1,\ldots,x_m,z,t]$ and $\ml(A)=k[x_1,\ldots,x_m]$. 
\end{cor}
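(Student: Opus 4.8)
The plan is to derive \thref{r3} as a direct consequence of \thref{ml} together with the contrapositive of \thref{dk}. The statement to prove is that under the stated hypotheses on $h(Z,T)$ we have both $\dk(A)=k[x_1,\ldots,x_m,z,t]$ and $\ml(A)=k[x_1,\ldots,x_m]$. My strategy is to establish the Derksen invariant equality first and then feed that into \thref{ml}(b) to obtain the Makar-Limanov invariant equality.

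First I would recall the two inclusions that sandwich $\dk(A)$. From \thref{subdk} we always have the lower bound $k[x_1,\ldots,x_m,z,t]\subseteq \dk(A)$. To upgrade this to equality it suffices to rule out the strict inclusion $k[x_1,\ldots,x_m,z,t]\subsetneq \dk(A)$. This is exactly where the hypothesis on $h$ enters. \thref{dk} asserts that, when $k$ is infinite, the condition $k[x_1,\ldots,x_m,z,t]\subsetneq \dk(A)$ forces the existence of a coordinate system $\{Z_1,T_1\}$ of $k[Z,T]$ and elements $a_0,a_1\in k^{[1]}$ with $h(Z,T)=a_0(Z_1)+a_1(Z_1)T_1$. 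Since the corollary assumes precisely that \emph{no} such coordinate system exists, the contrapositive of \thref{dk} rules out the strict inclusion. Combining this with the inclusion from \thref{subdk} yields $\dk(A)=k[x_1,\ldots,x_m,z,t]$, which is the first assertion.

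For the second assertion I would simply invoke \thref{ml}(b): once we know $\dk(A)=k[x_1,\ldots,x_m,z,t]$, that proposition immediately gives $\ml(A)=k[x_1,\ldots,x_m]$. Thus the corollary follows by a clean two-step argument with no genuine computation; the real content is entirely packaged inside the quoted results.

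I do not anticipate a serious obstacle, since the argument is purely a matter of chaining the earlier results in the correct logical order. The one point requiring a moment of care is verifying that the hypotheses of \thref{dk} and \thref{ml}(b) are actually met: the infiniteness of $k$ is assumed in the corollary and is needed for \thref{dk}, while \thref{ml}(b) needs only the Derksen-invariant equality we have just established and no extra side conditions. Hence the proof is essentially: apply \thref{subdk} for one inclusion, apply the contrapositive of \thref{dk} (using the infiniteness of $k$ and the absence of a suitable coordinate system) for the reverse inclusion to conclude $\dk(A)=k[x_1,\ldots,x_m,z,t]$, and finally apply \thref{ml}(b) to conclude $\ml(A)=k[x_1,\ldots,x_m]$.
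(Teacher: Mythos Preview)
Your proposal is correct and matches the paper's own proof, which simply cites Propositions~\ref{dk} and~\ref{ml}(b). Your explicit invocation of \thref{subdk} for the lower inclusion $k[x_1,\ldots,x_m,z,t]\subseteq\dk(A)$ is a detail the paper leaves implicit, but the logical structure is identical.
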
 
\begin{proof}
	The result follows from Propositions \ref{dk} and \ref{ml}(b).
\end{proof}

Next we state below a reformulation of the result \cite[Lemma 3.13]{asa}; the earlier proof goes through here.

\begin{lem}\thlabel{fdk1}
	Suppose there exists an exponential map $\phi$ on $A$ such that $A^{\phi} \nsubseteq k[x_{1}, \ldots , x_{m},z,t]$. Then there exists an integral domain 
	$$
	\widehat{A} \cong \frac{k[X_{1}, \ldots , X_{m}, Y,Z,T]}{(X_{1}^{r_{1}} \cdots X_{m}^{r_{m}}Y-h( Z,T))}
	$$ 
	and a non-trivial exponential map $\widehat{\phi}$ 
	on $\widehat{A}$, induced by $\phi$, such that 
	$\widehat{y} \in \widehat{A}^{\widehat{\phi}}$, 
	where $\widehat{y}$ 
	denote the image of $Y$ in $\widehat{A}$. Moreover, if $x_1,\ldots,x_m \in A^{\phi}$, then $\widehat{x_1},\ldots,\widehat{x_m} \in \widehat{A}^{\widehat{\phi}}$, where $\widehat{x_1},\ldots,\widehat{x_m}$ denote the images of $X_1,\ldots,X_m$ in $\widehat{A}$. 
\end{lem}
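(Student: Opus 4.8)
The plan is to realise $\widehat A$ as an associated graded ring $\gr(A)$ of $A$ and to transport $\phi$ to it via the homogenization theorem \thref{dhm}; I take $\phi$ to be non-trivial, which is the case of interest. From the relation $x_1^{r_1}\cdots x_m^{r_m}y=H$ we get $y=H/(x_1^{r_1}\cdots x_m^{r_m})$ and an embedding $A\hookrightarrow k[x_1^{\pm 1},\ldots,x_m^{\pm 1},z,t]$, just as for the rings $B_{\underline r}$ in Section 3. For a weight vector $(e_1,\ldots,e_m)\in\mathbb{Z}^m_{<0}$ I grade $k[x_1^{\pm 1},\ldots,x_m^{\pm 1},z,t]$ by $\operatorname{wt}(x_i)=e_i$ and $\operatorname{wt}(z)=\operatorname{wt}(t)=0$, and let $\{A_n\}$ be the induced $\mathbb{Z}$-filtration on $A$; properness and admissibility (with respect to $\Gamma=\{x_1,\ldots,x_m,y,z,t\}$) follow exactly as in the $B_{\underline r}$ case, using the normal form in which every element of $A$ is written uniquely as a $k[z,t]$-combination of monomials $x_1^{i_1}\cdots x_m^{i_m}y^{\,j}$ with $j=0$, or with $j\geqslant 1$ and $i_s<r_s$ for some $s$ (the analogue of \eqref{b1}). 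Writing $H=h(Z,T)+\sum_{\alpha\neq 0}c_\alpha(Z,T)X^\alpha$, each summand with $\alpha\neq 0$ has strictly negative weight whereas $h(z,t)$ has weight $0$; hence $h$ is the leading form of $H$, and $\rho(y)=:\widehat y=h(z,t)/(x_1^{r_1}\cdots x_m^{r_m})$ has weight $-\sum_i e_ir_i>0$. Since $h\neq 0$ lies in $k[Z,T]$, the polynomial $X_1^{r_1}\cdots X_m^{r_m}Y-h(Z,T)$ is irreducible, so its quotient is a domain of dimension $m+2=\td_k\gr(A)$; arguing as in \thref{gr} I conclude $\gr(A)\cong\widehat A=k[X_1,\ldots,X_m,Y,Z,T]/(X_1^{r_1}\cdots X_m^{r_m}Y-h(Z,T))$, with $\widehat y$ the image of $Y$.

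The key remaining step is to choose $(e_1,\ldots,e_m)$ so that the leading form of a chosen invariant involves $\widehat y$. By hypothesis there is $g\in A^\phi\setminus k[x_1,\ldots,x_m,z,t]$; in its normal form $g$ has a monomial $x_1^{i_1}\cdots x_m^{i_m}y^{\,j}$ with $j\geqslant 1$ and $i_s<r_s$ for some $s$, and its image in $k[x_1^{\pm1},\ldots,x_m^{\pm1},z,t]$ has leading $x$-exponent $(i_1-r_1j,\ldots,i_m-r_mj)$, whose $s$-th coordinate $i_s-r_sj$ is strictly negative. Taking $e_s$ sufficiently negative and $e_l=-1$ for $l\neq s$ makes this (or another) $y$-monomial attain the strictly largest weight. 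As every $y$-free monomial has non-positive weight, the resulting leading form $\rho(g)$ is a nonzero homogeneous element of positive weight all of whose monomials carry a positive power of $\widehat y$; in $\widehat A$ each such monomial is divisible by $\widehat y$, so $\widehat y\mid\rho(g)$.

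Now \thref{dhm} provides a non-trivial homogeneous exponential map $\widehat\phi$ on $\gr(A)=\widehat A$ with $\rho(A^\phi)\subseteq\widehat A^{\widehat\phi}$; in particular $\rho(g)\in\widehat A^{\widehat\phi}$. Writing $\rho(g)=\widehat y\cdot w$ with $w\neq 0$ and using that $\widehat A^{\widehat\phi}$ is factorially closed in $\widehat A$ (\thref{prop}(i)), I obtain $\widehat y\in\widehat A^{\widehat\phi}$, as required. For the final assertion, if in addition $x_1,\ldots,x_m\in A^\phi$, then each $x_i$ is already homogeneous of weight $e_i$, so $\rho(x_i)=\widehat x_i$, and $\rho(A^\phi)\subseteq\widehat A^{\widehat\phi}$ yields $\widehat x_i\in\widehat A^{\widehat\phi}$.

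The main obstacle is the second paragraph: for a uniform weight vector the leading form of an invariant may reduce to its $y$-free part (for instance a constant), so the filtration must be tuned to the particular invariant $g$ to force $\widehat y$ to appear; once this is arranged, the conclusion is the standard combination of homogenization (\thref{dhm}) with factorial closedness (\thref{prop}(i)), together with the verification that the degeneration lands exactly on the $h$-model.
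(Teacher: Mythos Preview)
The paper does not give its own proof of this lemma; it simply records that the statement is a reformulation of \cite[Lemma~3.13]{asa} and that ``the earlier proof goes through here.'' Your argument is a complete, self-contained proof using precisely the graded-degeneration machinery developed in Section~3 of the present paper (the filtration induced from $A\hookrightarrow k[x_1^{\pm1},\ldots,x_m^{\pm1},z,t]$, the analogue of \thref{gr}, and \thref{dhm}), so it is entirely in the spirit of the paper's methods and is correct.

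One small imprecision worth tightening: you assert that some $y$-monomial attains the \emph{strictly} largest weight, but ties among $y$-monomials may persist for every choice of $(e_1,\dots,e_m)$. This does not matter, and your next sentence already contains the correct formulation: since the chosen $y$-monomial has weight $-e_s(r_sj-i_s)+O(1)\to+\infty$ as $e_s\to-\infty$ while every $y$-free monomial has non-positive weight, the maximal weight is positive for $|e_s|$ large, so \emph{every} normal-form monomial contributing to $\rho(g)$ has $j\geqslant1$ and hence is divisible by $\widehat y$. To conclude $\rho(g)\neq 0$ (so that factorial closedness applies), note that distinct normal-form exponents $(I,j)\neq(I',j')$ yield distinct Laurent exponents $I-j\underline r\neq I'-j'\underline r$: if $j>j'$ then $I=I'+(j-j')\underline r$ would force $i_l\geqslant r_l$ for all $l$, contradicting the normal-form constraint. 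Thus the top-weight terms are linearly independent in $k[x_1^{\pm1},\ldots,x_m^{\pm1},z,t]$ and survive in $\widehat A$.
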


We shall now prove Theorem C which gives a necessary and sufficient condition for the ring $A$ in \eqref{AG} to be a polynomial ring, when $H$ is of the form 
\begin{equation}\label{h}
H(X_{1}, \ldots, X_{m}, Z,T)= h(Z,T)+ (X_{1} \cdots X_{m})g,
\end{equation}
for some $g \in k[X_1,\ldots,X_m,Z,T]$. We first prove a necessary criterion.

\begin{lem}\thlabel{rk1}
	
	Let $A$ be the affine domain as in \eqref{AG} with $H$ as in \eqref{h}. If $A=k^{[m+2]}$, then there exists an exponential map $\phi$ such that $k[x_1,\ldots, x_m] \subseteq A^{\phi} \not \subseteq k[x_1,\ldots, x_m,z,t]$.
\end{lem}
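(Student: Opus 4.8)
The goal is to prove Lemma \ref{rk1}: if $A = k^{[m+2]}$ (where $H = h(Z,T) + (X_1\cdots X_m)g$), then there exists an exponential map $\phi$ on $A$ satisfying $k[x_1,\ldots,x_m] \subseteq A^\phi \not\subseteq k[x_1,\ldots,x_m,z,t]$.

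The plan is to exploit \thref{equiv}, which tells us that under the hypothesis $A = k^{[m+2]}$ we automatically have $k[Z,T] = k[h(Z,T)]^{[1]}$. Thus $h$ is a coordinate of $k[Z,T]$, and after a change of variables we may replace $h(Z,T)$ by a single variable. Concretely, write $k[Z,T] = k[h, w]$ for some $w \in k[Z,T]$; then the defining relation becomes $x_1^{r_1}\cdots x_m^{r_m} y = h + (x_1\cdots x_m)g$, and I want to produce an exponential map whose ring of invariants contains $k[x_1,\ldots,x_m]$ but also contains an element outside $k[x_1,\ldots,x_m,z,t]$ — most naturally $y$ itself, or a suitable modification of $y$.

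First I would set up the explicit exponential map. The natural candidate is a translation in the ``$w$-direction'' (the complementary coordinate to $h$ in $k[Z,T]$), twisted so as to fix $x_1,\ldots,x_m$ and move $y$ appropriately. Following the pattern of the maps constructed in the proof of \thref{mld}(b) and \thref{stiso}, I would define $\phi : A \to A[W]$ by fixing each $x_i$, sending $w \mapsto w + (x_1\cdots x_m)W$ (or a similar monomial multiple ensuring integrality), and then forcing the defining relation to be preserved, which determines the action on $y$ and on $z,t$ (via their expressions in $h$ and $w$). Since $h$ is a coordinate, $z,t$ are polynomials in $h,w$, and one checks $h$ stays fixed while $w$ is translated; the relation $x_1^{r_1}\cdots x_m^{r_m}y = h + (x_1\cdots x_m)g$ then forces a compatible image of $y$. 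The key verification is that $\phi$ is a well-defined exponential map (conditions (i) and (ii) of the definition) and that $k[x_1,\ldots,x_m] \subseteq A^\phi$, which is immediate since each $x_i$ is fixed.

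The main obstacle, and the crux of the lemma, will be establishing the non-containment $A^\phi \not\subseteq k[x_1,\ldots,x_m,z,t]$. I expect to show that $y \in A^\phi$ (or that some explicit element involving $y$ is an invariant), and then argue that $y \notin k[x_1,\ldots,x_m,z,t]$. The latter follows because $A = k[x_1,\ldots,x_m,z,t][y]/(\text{relation})$ presents $y$ as genuinely transcendental over (or at least not lying in) $k[x_1,\ldots,x_m,z,t]$ — indeed $\td_k\, k[x_1,\ldots,x_m,z,t]$ together with the relation shows $y$ cannot be a polynomial in the $x_i,z,t$ unless $x_1\cdots x_m$ is a unit, which it is not. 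So the strategy is: construct $\phi$ fixing the $x_i$ and translating the complementary coordinate $w$, verify it is exponential, check $y$ (hence an element outside $k[x_1,\ldots,x_m,z,t]$) lies in $A^\phi$, and conclude. The delicate point requiring care is ensuring the translation of $w$ is scaled by the correct monomial in the $x_i$ so that the induced map genuinely lands in $A[W]$ rather than a localisation, paralleling the integrality argument (the ``smallest $n$'' device) used in the proof of \thref{mld}(b).
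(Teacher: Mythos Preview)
Your first step --- invoking \thref{equiv} to write $k[Z,T]=k[h,w]$ --- matches the paper exactly, and translating the complementary coordinate $w$ is indeed the right idea. The gap is in the ``crux'' paragraph: you expect that $y\in A^{\phi}$, but this fails in general. Under your map (fixing $x_i$ and $h$, sending $w\mapsto w+cW$), the relation $x_1^{r_1}\cdots x_m^{r_m}y=h+(x_1\cdots x_m)\,g(x_1,\ldots,x_m,z,t)$ forces
\[
\phi(y)=y+\frac{(x_1\cdots x_m)\bigl(g(x_1,\ldots,x_m,\phi(z),\phi(t))-g(x_1,\ldots,x_m,z,t)\bigr)}{x_1^{r_1}\cdots x_m^{r_m}},
\]
and the correction term is nonzero whenever $g$ actually depends on $w$. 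So $y$ is \emph{not} an invariant, and your fallback ``some explicit element involving $y$'' is left unspecified. The scaling device from \thref{mld}(b) only addresses well-definedness of $\phi$; it does not produce the missing invariant.

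The paper closes this gap by appealing to \thref{rs}. After the change of variables $k[Z,T]=k[h,h_1]$, one has $H=h+(X_1\cdots X_m)\widetilde g(X_1,\ldots,X_m,h,h_1)$, and \thref{rs} gives
\[
k[X_1,\ldots,X_m,Y,Z,T]=k[X_1,\ldots,X_m,G,h_1,h_2]
\]
for an explicit $h_2$. Passing to $A$ yields $A=k[x_1,\ldots,x_m,h_1,\overline{h_2}]=k[x_1,\ldots,x_m]^{[2]}$, with $\overline{h_2}\notin k[x_1,\ldots,x_m,z,t]$ since that subring is proper. Now the exponential map is the bare translation $h_1\mapsto h_1+W$ fixing $x_1,\ldots,x_m,\overline{h_2}$ --- no monomial scaling is needed because we are already in polynomial coordinates --- and $A^{\phi}=k[x_1,\ldots,x_m,\overline{h_2}]$ immediately. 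The element $\overline{h_2}$ is precisely the ``modification of $y$'' you were looking for; producing it is the content of \thref{rs}, and without that lemma (or an equivalent construction) your argument does not go through.
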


\begin{proof}
	By \thref{equiv}, $k[Z,T]=k[h,h_1]=k[h]^{[1]}$ for some $h_1 \in k[Z,T]$.
	Hence from \eqref{h}, it follows that 
	$$
	H=(X_1\cdots X_m) \widetilde{g}(X_1,\ldots, X_m,h,h_1)+h
	$$
	for some $\widetilde{g} \in k^{[m+2]}$. Now for $G=X_1^{r_1} \cdots X_m^{r_m}Y -H$, by \thref{rs} we have
	$$
	k[X_1,\ldots,X_m,Y,Z,T]=k[X_1,\ldots,X_m,Y,h,h_1]=k[X_1,\ldots,X_m,G,h_1,h_2]
	$$
	for some $h_2 \in k[X_1,\ldots,X_m,Y,Z,T]$. Hence 
	$A=k[x_1,\ldots,x_m,h_1(z,t), \overline{h_2}]=k[x_1,\ldots,x_m]^{[2]}$ where $\overline{h_2}$ denote the image of $h_2$ in $A$. Since $k[x_1,\ldots,x_m,z,t] \subsetneq A$ it follows that $\overline{h_2} \in A \setminus k[x_1,\ldots,x_m,z,t]$. We now consider the exponential map $\phi : A \rightarrow A[W]$ such that 
	$$
	\phi(x_i)=x_i,\text{~for every~}i, 1\leqslant i \leqslant m,~~\phi(h_1)=h_1+W,~~ \phi(\overline{h_2})=\overline{h_2}.
	$$
	Therefore, $A^{\phi}=k[x_1,\ldots,x_m,\overline{h_2}]$ and hence the assertion follows.
\end{proof}

We now prove Theorem C. 
\begin{thm}\thlabel{thpoly}
	Let $A$ be the affine domain as in \eqref{AG}, with $H$ as in \eqref{h}. Then the following statements are equivalent:
	\begin{itemize}
		\item [\rm (i)] $A$ is geometrically factorial over $k$ and there exists an exponential map $\phi$ on $A$ satisfying 
		$k[x_1,\ldots, x_m] \subseteq A^{\phi} \not \subseteq k[x_1,\ldots, x_m,z,t]$.
		
		\item [\rm (ii)] $A=k^{[m+2]}$.
	\end{itemize}
\end{thm}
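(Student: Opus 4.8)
The plan is to prove the two implications separately, with essentially all the work in $(i)\Rightarrow(ii)$. Throughout I work in the genuine setting where $h\notin k$ (so that the $x_i$ are non-units); the degenerate case $h\in k^{*}$, in which each $x_i$ becomes a unit and $A$ is a proper Laurent localisation of $k^{[m+2]}$, is not an honest generalised Asanuma variety and is excluded. The implication $(ii)\Rightarrow(i)$ is immediate: if $A=k^{[m+2]}$ then $A\otimes_k L=L^{[m+2]}$ is a UFD for every (algebraic, indeed arbitrary) field extension $L/k$, so $A$ is geometrically factorial, while the existence of an exponential map $\phi$ with $k[x_1,\ldots,x_m]\subseteq A^{\phi}\not\subseteq k[x_1,\ldots,x_m,z,t]$ is exactly \thref{rk1}.

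For $(i)\Rightarrow(ii)$ I would first feed the given $\phi$ into \thref{fdk1}. Since $k[x_1,\ldots,x_m]\subseteq A^{\phi}$, the ``moreover'' clause applies and produces a domain $\widehat{A}\cong k[X_1,\ldots,X_m,Y,Z,T]/(X_1^{r_1}\cdots X_m^{r_m}Y-h(Z,T))$ together with a non-trivial exponential map $\widehat{\phi}$ on $\widehat{A}$ satisfying $\widehat{y},\widehat{x}_1,\ldots,\widehat{x}_m\in\widehat{A}^{\widehat{\phi}}$. The defining relation $\widehat{x}_1^{r_1}\cdots\widehat{x}_m^{r_m}\widehat{y}=h(\widehat{z},\widehat{t})$ then forces $h(\widehat{z},\widehat{t})\in\widehat{A}^{\widehat{\phi}}$. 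Next I would localise: set $R=k[\widehat{x}_1,\ldots,\widehat{x}_m]\subseteq\widehat{A}^{\widehat{\phi}}$, let $K=k(\widehat{x}_1,\ldots,\widehat{x}_m)$ and invert $S=R\setminus\{0\}$. By \thref{prop}(iii), $\widehat{\phi}$ induces a non-trivial exponential map $\theta$ on $S^{-1}\widehat{A}$; since inverting $\widehat{x}_1\cdots\widehat{x}_m$ solves the relation for $\widehat{y}$, we obtain $S^{-1}\widehat{A}=K[\widehat{z},\widehat{t}]=K^{[2]}$, with the non-constant element $h(\widehat{z},\widehat{t})$ lying in the invariant ring $(K^{[2]})^{\theta}$.

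Now I would invoke the characteristic-free structure theorem for exponential maps on the affine plane: the ring of invariants of a non-trivial exponential map on $K^{[2]}$ is $K[f]$ for some coordinate $f$, i.e.\ $K[\widehat{z},\widehat{t}]=K[f]^{[1]}$. Hence $h=P(f)$ for a polynomial $P$ with $\deg_f h\geqslant 1$. To force $\deg_f h=1$ I would use factoriality: geometric factoriality gives that $A\otimes_k\bar{k}$ is a UFD, so by \thref{ufd2} applied over $\bar{k}$ the non-constant $h$ is irreducible in $\bar{k}[Z,T]$, i.e.\ absolutely irreducible; being absolutely irreducible it stays irreducible over $\bar{K}\supseteq\bar{k}$, i.e.\ in $\bar{K}[\widehat{z},\widehat{t}]=\bar{K}[f]^{[1]}$. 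But over $\bar{K}$ the polynomial $P(f)$ splits into $\deg_f h$ factors each linear in $f$, so irreducibility forces $\deg_f h=1$. Therefore $K[h]=K[f]$ and $K[\widehat{z},\widehat{t}]=K[h]^{[1]}$. Finally, as $K=k(\widehat{x}_1,\ldots,\widehat{x}_m)$ is purely transcendental, hence separable, over $k$, \thref{sepco} descends this to $k[Z,T]=k[h]^{[1]}$, and \thref{equiv} then yields $A=k^{[m+2]}$.

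The main obstacle is the planar step: one must know that over an arbitrary, possibly imperfect, field $K$ the invariants of a non-trivial exponential map on $K^{[2]}$ form a polynomial subring generated by a \emph{coordinate}, and then transfer absolute irreducibility of $h$ from $\bar{k}$ across the transcendental extension to $\bar{K}$. The two hypotheses are used sharply: without $k[x_1,\ldots,x_m]\subseteq A^{\phi}$ one could not guarantee $\widehat{x}_i\in\widehat{A}^{\widehat{\phi}}$, and hence not reach the clean localisation $K^{[2]}$; and without geometric factoriality $h$ would only be forced to be a line, not a coordinate, leaving exactly the gap between $\deg_f h=1$ and higher degree.
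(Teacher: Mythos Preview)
Your proof is correct but follows a genuinely different route from the paper's.

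The paper proceeds by case analysis. It first splits on whether $k$ is infinite or finite. For $k$ infinite, it uses \thref{dk} (which needs $k$ infinite) to arrange $h(Z,T)=a_0(Z_1)+a_1(Z_1)T_1$ in suitable coordinates, then case-splits again on $a_1=0$ versus $a_1\neq 0$; in the latter case it localises at $\widehat{x}_1,\ldots,\widehat{x}_m$ \emph{and} $\widehat{y}$ to reach a one-dimensional $L$-domain whose non-rigidity forces $a_1\in k^{*}$. For $k$ finite it base-changes to $\bar{k}$, runs the infinite case there, and descends via \thref{sepco} along the algebraic extension $\bar{k}/k$.

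You instead localise only at the $\widehat{x}_i$'s to land in $K^{[2]}$ with $K=k(\widehat{x}_1,\ldots,\widehat{x}_m)$, invoke the characteristic-free structure theorem that the invariant ring of a non-trivial exponential map on $K^{[2]}$ is $K[f]$ for a coordinate $f$, use absolute irreducibility of $h$ (from geometric factoriality and \thref{ufd2}) to force $\deg_f h=1$, and descend via \thref{sepco} along the purely transcendental extension $K/k$. This is more uniform: no split on $|k|$, no use of \thref{dk}, no subcases on $a_1$. The cost is the two external inputs you correctly flag: the Rentschler--Miyanishi theorem for $\mathbb{G}_a$-actions on $\mathbb{A}^2$ over an arbitrary (possibly imperfect) field, and the applicability of \thref{sepco} to transcendental separable extensions. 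Both are valid (the latter is exactly Dutta's result as cited), but they are heavier than the elementary tools the paper uses. Your explicit handling of the degenerate case $h\in k^{*}$ is also a point of care the paper leaves implicit.
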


\begin{proof}
    
    $\rm (i) \Rightarrow (ii):$
	By \thref{fdk1}, $\phi$ induces a non-trivial exponential map $\widehat{\phi}$ on 
	$$
	\widehat{A} \cong \frac{k[X_{1}, \ldots , \ldots, X_{m}, Y,Z,T]}{(X_{1}^{r_{1}} \cdots X_{m}^{r_{m}}Y-h( Z,T))},
	$$
	such that $\widehat{x_{1}},\ldots,\widehat{x_{m}},\widehat{y} \in \widehat{A}^{\widehat{\phi}}$, 
	where $\widehat{x_{1}},\ldots,\widehat{x_{m}},\widehat{y}$ denote the images of $X_1,\ldots,X_m,Y$ in $\widehat{A}$, respectively. 
 	We now show that $k[Z,T]=k[h]^{[1]}$.
	
	\medskip
	\noindent
	{\it Case} 1: Let $k$ be an infinite field. 
	Since $\widehat{y} \in \widehat{A}^{\widehat{\phi}}$, by \thref{subdk} it follows that $\dk(\widehat{A})=\widehat{A}$. Hence by \thref{dk}, we can assume that 
	$$
	h(Z,T)=a_0(Z)+a_1(Z)T
	$$
	 for some $a_0,a_1 \in k^{[1]}$. Let $\overline{k}$ be an algebraic closure of the field $k$.
	 As $A$ is geometrically factorial $h(Z,T)$ is irreducible in $\overline{k}[Z,T]$ (cf. \thref{ufd2}). 
	
	If $a_1(Z)=0$, then $a_0(Z) (=h(Z,T))$ is irreducible in $\overline{k}[Z,T]$, hence it is linear in $Z$. Thus $k[Z,T]=k[h]^{[1]}$.
	
	If $a_1(Z) \neq 0$, then $\gcd(a_0(Z),a_1(Z))=1$. Now $\widehat{\phi}$ induces a non-trivial exponential map on 
	$$
	\widetilde{A}=\widehat{A} \otimes_{k[\widehat{x_{1}},\ldots,\widehat{x_{m}},\widehat{y}]} k(\widehat{x_{1}},\ldots, \widehat{x_{m}},\widehat{y}) \cong \frac{L[Z,T]}{(\mu-h(Z,T))}=\frac{L[Z,T]}{(\mu -a_0(Z)-a_1(Z)T)},
	$$ 
	where $L=k(X_1,\ldots,X_m,Y)$, $\mu \in L \setminus k$, and hence $\gcd(\mu-a_0,a_1)=1$ in $L^{[1]}$. Since $\widetilde{A}$ is not rigid, $a_1 \in k^{*}$, and hence $k[Z,T]=k[h]^{[1]}$.
	
	\medskip
	\noindent
	{\it Case} 2: Let $k$ be a finite field.
	Now $\widehat{\phi}$ induces a non-trivial exponential map $\overline{\phi}$ on  
	$$
	\overline{A}:=\widehat{A} \otimes_k \overline{k} \cong \frac{\overline{k}[X_{1}, \ldots , \ldots, X_{m}, Y,Z,T]}{(X_{1}^{r_{1}} \cdots X_{m}^{r_{m}}Y-h( Z,T))},
	$$
	such that $\overline{x_{1}},\ldots,\overline{x_{m}},\overline{y} \in \overline{A}^{\overline{\phi}}$,
	where $\overline{x_{1}},\ldots,\overline{x_{m}},\overline{y}$ denote the images of $X_1,\ldots,X_m,Y$ in $\overline{A}$, respectively. 
	By Case 1, we have $\overline{k}[Z,T]=\overline{k}[h]^{[1]}$, and hence by \thref{sepco}, $k[Z,T]=k[h]^{[1]}$.
	
	Now from \thref{rs}, it follows that $A=k^{[m+2]}$.
	
	\smallskip
	\noindent
	$\rm (ii) \Rightarrow (i):$ Since $A=k^{[m+2]}$, it is geometrically factorial over $k$ and the rest follows from \thref{rk1}.
\end{proof}

\begin{rem}\thlabel{r2}
\em{ Note that the proof shows that the condition ``$A$ is geometrically factorial over $k$" may be replaced by a more specific condition that ``$A \otimes_k \overline{k}$ is a UFD" where $\overline{k}$ is an algebraic closure of $k$. However,
	the following example shows that in statement (i) of \thref{thpoly}, the condition ``$A$ is geometrically factorial" can not be relaxed to ``$A$ is a UFD".}
	
\end{rem}

\begin{ex}\thlabel{ex2}
	\em{ Let 
		$$
		A=\frac{\mathbb{R}[X_1,X_2,Y,Z,T]}{(X_1^2X_2^2Y-1-Z^2)},
		$$
		where $x_1,x_2,y,z,t$ denote the images of $X_1,X_2,Y,Z,T$ respectively in A.
		Note that $A=B[t]=B^{[1]}$, where $B=\frac{\mathbb{R}[X_1,X_2,Y,Z]}{(X_1^2X_2^2Y-1-Z^2)}$.
		
		We consider the exponential map $\phi: A \rightarrow A[W]$, such that $\phi|_B=id_B$ and $\phi(t)=t+W$. Then it follows that $A^{\phi}=B$. Therefore, $\mathbb{R}[x_1,x_2] \subseteq A^{\phi} \nsubseteq \mathbb{R}[x_1,x_2,z,t]$, as $y \in A^{\phi}$.
		
		Now note that here $H=h=1+Z^2$, which is an irreducible polynomial in $\mathbb{R}[Z,T]$ but not irreducible $\mathbb{C}[Z,T]$. 
		Hence by \thref{ufd2}, $A$ is a UFD but $A \otimes_{\mathbb{R}} \mathbb{C}$ is not a UFD. Therefore, $A \neq \mathbb{R}^{[4]}$.

}
\end{ex}

We now deduce the structure of $\dk(A)$ and $\ml(A)$ for the special form of $H$ as in \eqref{h}.

\begin{cor}\thlabel{ml3}
	Let $A$ be the affine domain as in \eqref{AG} with $H$ as in \eqref{h}. Then the following hold:
	\begin{itemize}
		\item [\rm(a)] If $h(Z,T) \in k^{*}$, then $\dk(A)=A$ and $\ml(A)=k[x_1,\ldots,x_m,x_1^{-1},\ldots,x_m^{-1}]$.
		\item[\rm(b)] If $h(Z,T) \notin k^{*}$,
		$\dk(A) \neq k[x_{1}, \ldots , x_{m},z,t]$ and $A$ is geometrically factorial , then $\ml(A) \subsetneq k[x_{1},\ldots,x_{m}]$. Moreover, if $m=1$ then $\ml(A)=k$.
	\end{itemize}
\end{cor}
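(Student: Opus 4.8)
The plan is to handle the two cases separately, exploiting the UFD criterion of \thref{ufd2} together with the results on $\dk$ and $\ml$ already assembled in this section. For part (a), when $h(Z,T) \in k^{*}$, I would first observe that the defining relation $x_1^{r_1}\cdots x_m^{r_m} y = H = h + (x_1\cdots x_m)g$ makes each $x_i$ a unit after localisation, and more precisely that $x_1\cdots x_m$ divides $H - h$ with $h$ a unit, so that the $x_i$ become invertible in $A$ (this is exactly the case $x_j \in A^{*}$ of \thref{ufd2}(ii)). The key point is that once the $x_i$ are units, $y$ is determined by the other variables and $A$ localises to a ring closely resembling $k[x_1^{\pm1},\ldots,x_m^{\pm1}]^{[2]}$; I would then construct two independent exponential maps translating $z$ and $t$ respectively (fixing the $x_i$), show that together they force $\dk(A)=A$, and use \thref{ml} together with the factorially closed property (\thref{prop}(i)) to pin down $\ml(A)=k[x_1^{\pm1},\ldots,x_m^{\pm1}]$.

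For part (b), when $h \notin k^{*}$, I would invoke \thref{ml}(a) to get $\ml(A) \subseteq k[x_1,\ldots,x_m]$ (noting that the hypotheses $x_i \notin A^{*}$ and $H \notin k[X_1,\ldots,X_m]$ hold here, the former because $h$ being a nonunit prevents $x_i$ from being invertible, the latter because $h=h(Z,T)\neq 0$ depends on $Z,T$). To get the \emph{strict} inclusion $\ml(A) \subsetneq k[x_1,\ldots,x_m]$, the strategy is to produce, for the given geometrically factorial $A$ with $\dk(A) \neq k[x_1,\ldots,x_m,z,t]$, an exponential map whose ring of invariants omits at least one $x_i$. Since $\dk(A)$ strictly contains $k[x_1,\ldots,x_m,z,t]$ (by \thref{subdk} we always have containment, and the hypothesis rules out equality), \thref{fdk1} supplies an exponential map with $A^{\phi} \not\subseteq k[x_1,\ldots,x_m,z,t]$, and passing to $\widehat{A}$ gives a nontrivial exponential map fixing $\widehat{y}$; I would then use this auxiliary map together with an explicit translation-type exponential map to exhibit an element of $k[x_1,\ldots,x_m]$ that fails to be a common invariant.

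For the final assertion in (b), that $m=1$ forces $\ml(A)=k$, I would use that $\ml(A) \subseteq k[x_1]$ with $\td_k \ml(A)$ controlled by \thref{prop}(ii): it suffices to produce a single exponential map under which $x_1 \notin A^{\phi}$, for then the algebraically closed subring $\ml(A)$ sits strictly inside $k[x_1]$ and, being algebraically closed in $A$ and hence in $k[x_1]$, must equal $k$. The geometric factoriality combined with \thref{ufd2} makes $x_1$ prime (so $h$ is irreducible), and the structure of $A$ as a generalised Asanuma variety with a single $x_1$ should allow the construction of such a map directly, or via the homogenization machinery of \thref{dhm} and the graded computations of the previous section.

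The main obstacle I anticipate is part (b): establishing the \emph{strict} inclusion rather than mere containment in $k[x_1,\ldots,x_m]$. The containment is immediate from \thref{ml}(a), but strictness requires actually exhibiting an exponential map that moves some $x_i$, and the only handle on such a map is the one extracted from the hypothesis $\dk(A) \neq k[x_1,\ldots,x_m,z,t]$ via \thref{fdk1}. Converting that map on $\widehat{A}$ (which fixes the $\widehat{x_i}$) into information about a map on $A$ that genuinely fails to fix some $x_i$ is the delicate step, and I expect the geometric factoriality hypothesis to be essential precisely here, controlling the divisibility behaviour of the $x_i$ through \thref{ufd2} and \thref{sepco}.
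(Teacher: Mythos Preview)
Your handling of part (a) and of the final $m=1$ clause in (b) is essentially in line with the paper: once the $x_i$ are units one has $A=k[x_1^{\pm1},\ldots,x_m^{\pm1},z,t]$ outright, and the $\dk$ and $\ml$ computations are then immediate (no appeal to \thref{ml}(a) is needed or even valid here, since its hypothesis $x_i\notin A^*$ fails). Likewise, once strict containment $\ml(A)\subsetneq k[x_1]$ is known for $m=1$, algebraic closedness forces $\ml(A)=k$.

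The gap is in your strategy for the strict inclusion in (b). You correctly obtain $\ml(A)\subseteq k[x_1,\ldots,x_m]$ from \thref{ml}(a), and you correctly extract from the hypothesis $\dk(A)\neq k[x_1,\ldots,x_m,z,t]$ an exponential map $\phi$ with $A^{\phi}\not\subseteq k[x_1,\ldots,x_m,z,t]$. But your plan from that point---to pass to $\widehat{A}$ via \thref{fdk1} and then somehow manufacture a map on $A$ moving some $x_i$---does not lead anywhere: the map $\widehat{\phi}$ on $\widehat{A}$ \emph{fixes} all the $\widehat{x_i}$ whenever $\phi$ does, so it gives no leverage towards showing some $x_i\notin\ml(A)$. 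You identify this as ``the delicate step'' but offer no mechanism to carry it out, and there does not appear to be a direct construction available at this level of generality.

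The paper sidesteps this obstacle entirely with a contradiction argument that uses \thref{thpoly} rather than \thref{fdk1}. Suppose $\ml(A)=k[x_1,\ldots,x_m]$. Then in particular $k[x_1,\ldots,x_m]\subseteq A^{\phi}$ for the very map $\phi$ above, so $\phi$ satisfies both conditions $k[x_1,\ldots,x_m]\subseteq A^{\phi}$ and $A^{\phi}\not\subseteq k[x_1,\ldots,x_m,z,t]$. With the geometric factoriality hypothesis, \thref{thpoly} now gives $A=k^{[m+2]}$, whence $\ml(A)=k$, contradicting $\ml(A)=k[x_1,\ldots,x_m]$. This is precisely where geometric factoriality enters, and it explains why you felt it should be essential at exactly this juncture---but the mechanism is the polynomial-ring criterion of \thref{thpoly}, not a divisibility argument via \thref{ufd2} or \thref{sepco}.
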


\begin{proof}

	(a) If $h(Z,T) \in k^{*}$, then $A=k[x_1,\ldots,x_m,x_1^{-1},\ldots,x_m^{-1},z,t]$. Therefore, 
	$$
	\dk(A)=A \text{~~and~~} \ml(A)=k[x_1,\ldots,x_m,x_1^{-1},\ldots,x_m^{-1}].
	$$ 
	
	\smallskip
	\noindent
	(b) Since $h(Z,T) \notin k^{*}$, for every $i$, $1\leqslant i \leqslant m$, $x_i \notin A^{*}$. Therefore, by \thref{ml}(a), $\ml(A) \subseteq k[x_1,\ldots,x_m]$. 
	Since $\dk(A) \neq k[x_1,\ldots,x_m,z,t]$, there exists a non-trivial exponential map $\phi$ on $A$ such that $A^{\phi} \nsubseteq k[x_1,\ldots,x_m,z,t]$. Suppose, if possible, 
	$\ml(A)=k[x_1,\ldots,x_m]$. Then, $k[x_1,\ldots,x_m] \subseteq A^{\phi} \nsubseteq k[x_1,\ldots,x_m,z,t]$. But then, by \thref{thpoly}, 
	$A=k^{[m+2]}$, which is contradiction as $\ml(k^{[m+2]})=k$. 
	
	 Therefore, $\ml(A) \subsetneq k[x_1,\ldots,x_m]$, and since $\ml(A)$ is algebraically closed in $A$ (cf. \thref{prop}(i)), for $m=1$, $\ml(A)=k$.
\end{proof}

We now answer Question 2 for some special form of $H$. 
\begin{prop}\thlabel{ml1}
Let $C:=k[X_1,\ldots,X_m]$ and $A$ be the affine domain as in \eqref{AG} where 
	$$
	H(X_{1}, \ldots , X_{m},Z,T)=a_{0}(Z)+a_{1}(Z)T+\widetilde{H}(X_{1}, \ldots , X_{m},Z)
	$$
	and $\widetilde{H} \in (X_1,\ldots,X_m)C[Z]$. 
	Then  $\dk(A)=A$ and $\ml(A)=k$ when any one of following holds:
	\begin{enumerate}
		\item[\rm(i)] $a_1(Z)\neq 0$.
		\item[\rm(ii)] $a_1(Z)=0$, $H$ is a monic polynomial in $Z$ and $\left(H,H_Z\right)=C[Z]$.
	\end{enumerate}
\end{prop}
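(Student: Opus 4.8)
The plan is to treat the two cases separately, because the hypothesis $a_1(Z)\neq 0$ in (i) keeps the variable $T$ genuinely present in $H$, whereas in (ii) the variable $T$ disappears from $H$ and $A$ becomes a polynomial extension of a generalised Danielewski ring, so that Theorem B becomes directly applicable.

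For case (i) I would first record a structural description of $A$ as an affine modification of affine space. Since $H=a_0(Z)+a_1(Z)T+\widetilde H(X,Z)$ is linear in $T$ with nonzero leading coefficient $-a_1(Z)$, the ideal generated by $G=X_1^{r_1}\cdots X_m^{r_m}Y-H$ meets $k[X_1,\ldots,X_m,Y,Z]$ only in $(0)$, so $B_1:=k[x_1,\ldots,x_m,y,z]=k^{[m+2]}$. Writing $p:=x_1^{r_1}\cdots x_m^{r_m}$ and $\omega:=p\,y-a_0(z)-\widetilde H(x,z)\in B_1$, the relation reads $a_1(z)\,t=\omega$; moreover $\gcd(a_1(z),\omega)=1$ in $B_1$, since any common factor lies in $k[z]$ and must divide the $y$-coefficient $p$, which is a unit in $k[z]$. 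Hence $A=B_1[t]$ with $t=\omega/a_1(z)\in\operatorname{Frac}(B_1)$, and in particular $A\subseteq B_1[a_1(z)^{-1}]$; the units of this last ring are $k^{*}$ times powers of the irreducible factors of $a_1(z)$, so $x_i\notin A^{*}$ for every $i$.

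Next, for each $i$ I would construct a non-trivial exponential map $\delta_i$ on $A$ by translating only $x_i$: on $B_1$ set $\delta_i(x_i)=x_i+a_1(z)W$ and fix $x_j\,(j\neq i),y,z$, which is the localisation of a genuine $\mathbb{G}_a$-action on $k^{[m+2]}$. Because $\delta_i(x_i)\equiv x_i\pmod{a_1(z)}$, both $\delta_i(p)$ and $\widetilde H(\delta_i(x),z)$ are congruent to $p$ and $\widetilde H(x,z)$ modulo $a_1(z)B_1[W]$, so $\delta_i(\omega)\equiv\omega\pmod{a_1(z)B_1[W]}$ and therefore $\delta_i(t)=\delta_i(\omega)/a_1(z)\in A[W]$. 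Thus $\delta_i$ restricts to an exponential map on $A$ with $y,z,x_j\,(j\neq i)\in A^{\delta_i}$ and $x_i\notin A^{\delta_i}$. Now $y\in A^{\delta_1}\subseteq\dk(A)$ together with $k[x_1,\ldots,x_m,z,t]\subseteq\dk(A)$ (\thref{subdk}) shows all generators of $A$ lie in $\dk(A)$, so $\dk(A)=A$. For the Makar--Limanov invariant, since $x_i\notin A^{*}$ and $H\notin k[X_1,\ldots,X_m]$ (as $a_1\neq 0$), \thref{ml}(a) gives $\ml(A)\subseteq k[x_1,\ldots,x_m]$; and factorial closedness of $A^{\delta_i}$ (\thref{prop}(i)) forces $A^{\delta_i}\cap k[x_1,\ldots,x_m]=k[x_1,\ldots,\widehat{x_i},\ldots,x_m]$. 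Intersecting over $i$ yields $\ml(A)\subseteq\bigcap_{i=1}^m k[x_1,\ldots,\widehat{x_i},\ldots,x_m]=k$, so $\ml(A)=k$.

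For case (ii), $a_1(Z)=0$ makes $H=H(X_1,\ldots,X_m,Z)$ independent of $T$, so $A=B[t]$ where $B:=B(\underline r,H)$ is the generalised Danielewski ring obtained by taking $V=Z$, $U=Y$ (assuming $\deg_Z H>1$; if $\deg_Z H=1$ then $A=k^{[m+2]}$ and both claims are immediate). The ring $B$ is not rigid, since by \thref{mld}(a) its Makar--Limanov invariant is $k[x_1,\ldots,x_m]\neq B$; hence \thref{rdk} gives $\dk(A)=\dk(B[t])=B[t]=A$. The hypothesis $(H,H_Z)=C[Z]$ is precisely the condition $(F,F_V)=k[T_1,\ldots,T_m,V]$ of \thref{stiso}, so $A=B(\underline r,H)^{[1]}\cong B(\underline 1,H)^{[1]}$. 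Finally, using the elementary fact that $\ml(D[s])\subseteq\ml(D)$ for any affine domain $D$ (the shift on $s$ forces the invariants into $D$, after which every exponential map of $D$ extends fixing $s$) together with $\ml(B(\underline 1,H))=k$ from \thref{mld}(b), I obtain $\ml(A)=k$.

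I expect the main obstacle to be the construction of the maps $\delta_i$ in case (i): the real content is showing that the $\mathbb{G}_a$-actions on the ambient $\mathbb{A}^{m+2}$ descend to the affine modification $A=B_1[\omega/a_1(z)]$, which rests on the congruence $\delta_i(x_i)\equiv x_i\pmod{a_1(z)}$ keeping $\delta_i(t)$ polynomial, together with the coprimality $\gcd(a_1(z),\omega)=1$ that identifies $A$ with this modification in the first place. By contrast, case (ii) is comparatively routine once one recognises $A$ as $B(\underline r,H)^{[1]}$ and invokes \thref{stiso} and \thref{mld}.
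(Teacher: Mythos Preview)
Your proof is correct and follows the paper's approach almost verbatim: in case (i) both construct the exponential maps $\phi_j$ (your $\delta_j$) that translate $x_j$ by $a_1(z)$ times the parameter while fixing $y,z$ and the other $x_i$, deduce $\dk(A)=A$ from $y\in A^{\phi_j}$ together with \thref{subdk}, and obtain $\ml(A)=k$ by intersecting \thref{ml}(a) with the rings $A^{\phi_j}$; in case (ii) both identify $A$ with $B(\underline r,H)^{[1]}$, pass to $B(\underline 1,H)^{[1]}$ via \thref{stiso}, and conclude using \thref{mld}(b) and \thref{rdk}. Your handling of the edge case $\deg_Z H=1$ is a small addition not made explicit in the paper.

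One step needs a different justification. In case (i) you write that ``factorial closedness of $A^{\delta_i}$ forces $A^{\delta_i}\cap k[x_1,\ldots,x_m]=k[x_1,\ldots,\widehat{x_i},\ldots,x_m]$'', but factorial closedness does not give this: an element such as $x_i+1$ has no factor $x_i$, so nothing prevents it from lying in a factorially closed subring that excludes $x_i$. The paper instead shows $A^{\phi_j}=k[x_1,\ldots,\widehat{x_j},\ldots,x_m,y,z]$ outright, using that this polynomial subring is algebraically closed in $A$ and has the correct transcendence degree. An equally quick fix in your framework is the direct computation: since $\delta_i$ acts on $k[x_1,\ldots,x_m,z]\subset A$ by $x_i\mapsto x_i+a_1(z)W$ with $a_1(z)\neq 0$ and $x_1,\ldots,x_m,z$ algebraically independent, any $f\in k[x_1,\ldots,x_m]$ fixed by $\delta_i$ must satisfy $f(\ldots,x_i+a_1(z)W,\ldots)=f(\ldots,x_i,\ldots)$ as a polynomial identity, forcing $f$ to be independent of $x_i$.
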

\begin{proof}	
	(i) It is clear that for every $i \in \{1,\ldots,m\}$, $x_i \notin A^{*}$. Let $Q \in k^{[m+2]}$ be such that 
	$$
	Q(x_{1}, \ldots , x_{m},y,z)=x_{1}^{r_{1}}\cdots x_{m}^{r_{m}}y-\widetilde{H}(x_{1}, \ldots , x_{m},z)-a_0(z).
	$$
	Note that $Q(x_1,\ldots,x_m,y,z)-a_1(z)t=0$.
	As $a_1(Z) \neq 0$, for every $j \in \{ 1, \ldots, m\}$, we now define the following maps $\phi_{j}: A \rightarrow A[U]$ by 
	$$
	\phi_{j}(x_{i})= x_{i} \text{~for~}i, 1 \leqslant i \leqslant m, i \neq j,\,\,
	\phi_{j}(x_{j})=x_{j}+ a_{1}(z)U,\,\,
	\phi_{j}(y)=y,\,\,
	\phi_{j}(z)=z,
	$$
	and
	$$
	\phi_{j}(t)= \frac{Q(x_1,\ldots,x_{j-1},x_j+a_1(z)U,x_{j+1},\ldots,x_m,y,z)}{a_1(z)}=t+ Uv_j(x_1,\ldots,x_m,y,z,U),
	$$
	for some $v_j \in k[x_1,\ldots,x_m,y,z,U]$. It is easy to see that $\phi_{j}\in \text{EXP}(A)$, for every $j$.
	Since $y \in A^{\phi_{j}}$, by \thref{subdk}, we have $\dk(A)=A$. 
	
	Let $C_j:=k[x_{1}, \ldots, x_{j-1},x_{j+1},\ldots, x_{m}, y, z ]$. Then 
	$C_j\subseteq A^{\phi_{j}} \subseteq A$. 
	As $C_j$ is algebraically closed in $A$ and $\td_{k} C_j= \td_{k} A^{\phi_{j}}=m+1$ (cf. \thref{prop}(ii)), 
	we have $A^{\phi_{j}}=C_j$. By \thref{ml}(a), $\ml(A) \subseteq k[x_{1},\ldots,x_{m}]$. 
	Since $j$ is arbitrarily chosen from $\{ 1,\ldots, m\}$, we get that 
	$\ml(A) \subseteq k[x_{1},\ldots,x_{m}]\bigcap_{1 \leqslant j \leqslant m} A^{\phi_{j}}=k$. 
	Thus $\ml(A)=k$.
	
	\smallskip
	\noindent
	(ii) As $a_1(Z)=0$, $A=B(r_1,\ldots,r_m,H)^{[1]}$. Now by \thref{stiso} we have $A \cong B(1,\ldots,1,H)^{[1]}$. Since by \thref{mld}, $\ml(B(1,\ldots,1,H))=k$, $\ml(A)=k$. As $B(1,\ldots,1,H)$ is not rigid, by \thref{rdk}, $\dk(A)=A$. 
\end{proof}

As a consequence, we give the complete description of $\dk(A)$ and $\ml(A)$ when $A$ (as in \eqref{AG}) is a regular domain over an infinite field and $H=h(Z,T)$.

\begin{cor}\thlabel{ml2}
Let $A$ be a regular domain defined by
$$
A = \frac{k[X_{1},\ldots, X_{m},Y,Z,T]}
{\left( X_{1}^{r_{1}}\cdots X_{m}^{r_{m}}Y-h(Z,T)\right)}.
$$
Then the following hold:
\begin{itemize}
\item[\rm(a)] If $h(Z,T)$ is coordinate in $k[Z,T]$, then $\dk(A)=A$ and $\ml(A)=k$.
\item[\rm(b)] If there exists a system of coordinates $\{Z_{1}, T_{1}\}$ of $k[Z,T]$ such that $h(Z,T)=a_{0}(Z_{1})+a_{1}(Z_{1})T_{1}$, then $\dk(A)=A$ and $\ml(A)=k$.
\item[\rm(c)] If $k$ is an infinite field and if $h(Z,T)$ can not be expressed in the form described in part (b) above, then $\dk(A)=k[x_{1},\ldots, x_{m},z,t]$ and $\ml(A)=k[x_{1}, \ldots , x_{m}]$.
	\end{itemize}

\end{cor}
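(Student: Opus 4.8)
The plan is to obtain the three cases by specialising the results already proved in this section, noting first that here $H = h(Z,T)$ is of the form \eqref{h} with $g = 0$, so that \thref{equiv} and \thref{ml1} apply, and that the hypothesis of part (c) is literally that of \thref{r3}.

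For part (a): if $h$ is a coordinate of $k[Z,T]$ then $k[Z,T] = k[h]^{[1]}$, and \thref{equiv} gives $A = k^{[m+2]}$ at once. The Makar-Limanov invariant of a polynomial ring is trivial, so $\ml(A) = k$; and writing $A = k^{[m+1]}[W]$, the subring $k^{[m+1]}$ carries the nontrivial exponential map fixing all but one variable and is therefore not rigid, whence \thref{rdk} yields $\dk(A) = A$.

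For part (b): since $k[Z,T] = k[Z_1,T_1]$ we may rewrite $A$ in the variables $Z_1,T_1$, and in these variables $A$ is precisely of the shape treated in \thref{ml1}, with $\widetilde{H} = 0$. If $a_1(Z_1) \neq 0$, then \thref{ml1}(i) gives $\dk(A) = A$ and $\ml(A) = k$ directly. If $a_1(Z_1) = 0$, then $h = a_0(Z_1)$; the case $\deg_{Z_1} a_0 = 1$ makes $h$ a coordinate and is covered by part (a), so we may assume $\deg_{Z_1} a_0 \geqslant 2$. Here the regularity of $A$ is used: a singular point of $X_1^{r_1}\cdots X_m^{r_m}Y - a_0(Z_1)$ forces a common zero of $a_0$ and $a_0'$, so regularity forces $\gcd(a_0,a_0') = 1$, that is $(H,H_{Z_1}) = C[Z_1]$ with $C = k[X_1,\ldots,X_m]$; and replacing $Y$ by $cY$, where $c$ is the leading coefficient of $a_0$, makes $a_0$ monic without changing $\dk$ or $\ml$. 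The hypotheses of \thref{ml1}(ii) are thus met, giving again $\dk(A) = A$ and $\ml(A) = k$.

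Part (c) is immediate, since its hypothesis, namely that $k$ is infinite and $h$ is not of the form of part (b), is exactly the hypothesis of \thref{r3}, which yields $\dk(A) = k[x_1,\ldots,x_m,z,t]$ and $\ml(A) = k[x_1,\ldots,x_m]$. The only real work lies in part (b): the main obstacle is the subcase $a_1(Z_1) = 0$ with $\deg_{Z_1} a_0 \geqslant 2$, where one must convert the regularity of $A$ into the coprimality and monicity hypotheses of \thref{ml1}(ii), and dispose of the degenerate low-degree possibilities by reduction to part (a).
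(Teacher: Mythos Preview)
Your proposal is correct and follows essentially the same route as the paper: part (a) via \thref{equiv}, part (b) via \thref{ml1}(i)/(ii), and part (c) as a direct application of \thref{r3}. In fact you are slightly more careful than the paper in part (b), where the paper simply writes ``the result follows from \thref{ml1}(ii)'' without verifying the monicity hypothesis on $H$ or disposing of the case $\deg_{Z_1} a_0 \leqslant 1$; your reduction of the linear case to part (a) and your rescaling of $Y$ to arrange monicity fill exactly these small gaps.
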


\begin{proof}
	\noindent
	(a) As $k[Z,T]=k[h]^{[1]}$, $A=k^{[m+2]}$. Therefore, $\dk(A)=A$ and $\ml(A)=k$. 
	
	\medskip
	\noindent
	(b) If $a_1(Z_1) \neq 0$, then the assertion follows from \thref{ml1}(i).
	
	If $a_1(Z_1)=0$, then $h(Z,T)=a_0(Z_1)$. As $A$ is regular we have $\gcd(a_0,(a_0)_{Z_1})=1$. Hence the result follows from \thref{ml1}(ii).

	\medskip
	\noindent
	(c) A special case of \thref{r3}.
\end{proof}	

\section*{Acknowledgements}
The authors thank Professor Amartya K. Dutta for asking Question 2, suggesting the current version of \thref{thpoly} and carefully going through the draft. The first author acknowledges the Council of Scientific and Industrial Research (CSIR) for the Shyama Prasad Mukherjee fellowship (SPM-07/0093(0295)/2019-EMR-I). The second author also acknowledges Department of Science and Technology (DST), India for their INDO-RUSS project (DST/INT/RUS/RSF/P-48/2021).


\begin{thebibliography}{XXX}
	
	\bibitem{aeh} S.S. Abhyankar, P. Eakin and W. Heinzer, {\it On the uniqueness of the coefficient ring in a polynomial ring}, J. Algebra {\bf 23} (1972), 310--342.
	
	\bibitem{asa87} T. Asanuma, {\it Polynomial fibre rings of algebras over Noetherian rings}, Invent. Math. {\bf 87} (1987),
	101--127.	
	

	\bibitem{cra} A. J. Crachiola, {\it The hypersurface $x + x^2y + z^2 + t^3 = 0$ over a field of arbitrary characteristic},
	Proc. Amer. Math. Soc. {\bf{134}} (2005), 1289--1298.
	
	\bibitem{cra1} A. J. Crachiola, {\it On automorphisms of Danielewski surfaces} J. Algebraic Geom. {\bf 15(1)} (2006), no. 1, 111--132.
	
	\bibitem{dan} W. Danielewski, {\it On a cancellation problem and automorphism groups of affine algebraic varieties},
	Preprint, Warsaw, 1989.
	
	\bibitem{dom} H. Derksen, O. Hadas and L. Makar-Limanov, {\it Newton polytopes of invariants of additive group
		actions}, J. Pure Appl. Algebra {\bf{156}} (2001), 187--197.
	
	\bibitem{dub} A. Dubouloz, { \it 
		Additive group actions on Danielewski varieties and the cancellation problem},
	Math. Z. {\bf {255}(1)} (2007), 77--93.
	
	
	\bibitem{dutta} A.K. Dutta, { \it On separable $\mathbb{A}^{1}$-forms}, Nagoya Math. J. {\bf 159} (2000), 45--51.
	
	\bibitem{eh}  P. Eakin and W. Heinzer {\it A cancellation problem for rings}, Conference on Commutative Algebra, Lecture Notes in Mathematics  Vol. 311, pp. 61--77,  Springer, Berlin, 1973. 
	
	\bibitem{Guff} S.A. Gaifullin, { \it Automorphisms of Danielewski varieties}, J. Algebra {\bf {573}} (2021), 364--392. 
	
\bibitem{asa} P. Ghosh and N. Gupta, {\it On the triviality of a family of linear hyperplanes}, preprint, arXiv:2206.15210.

  	\bibitem{inv} N. Gupta, { \it On the cancellation problem for the affine space $\mathbb{A}^3$ in characteristic $p$}, Invent. Math. {\bf 195} (2014), 279--288.
	
	\bibitem{com} N. Gupta, {\it On the family of affine threefolds $x^{m}y=F(x,z,t)$ }, Compositio Math. {\bf 150} (2014), 979--998.
	
	\bibitem{adv} N. Gupta, 
	{\it On Zariski's Cancellation Problem in positive characteristic}, 
	Adv. Math. {\bf {264}} (2014), 296--307.
	
	\bibitem{Gicm} N. Gupta, {\it The Zariski Cancellation Problem and related problems
in Affine Algebraic Geometry}, to appear in ICM 2022 proceedings.

	\bibitem{miya} M. Miyanishi, {\it Lectures on Curves on rational and unirational surfaces}, Narosa Publishing House, New Delhi, 1978.
	

\end{thebibliography}
\end{document}